\newtheorem{theorem}{Theorem}
\newtheorem{lemma}[theorem]{Lemma}
\newtheorem{proposition}[theorem]{Proposition}
\newtheorem{definition}[theorem]{Definition}
\newtheorem{remark}[theorem]{Remark}
\title[Finsler broken scattering relation]{A foliated and reversible Finsler manifold is determined by its broken scattering relation}
\subjclass[2010]{86A22, 53Z05, 53C60} 
\keywords{Finsler manifold, scattering relation, distance functions, anisotropic elasticity}
\author[M. V. de Hoop]{Maarten V. de Hoop}
\address{M. V. de Hoop: Computational and
    Applied Mathematics \& Earth Science, Rice University, Houston,
    TX 77005, USA (\tt{mdehoop@rice.edu}).}
\author[J. Ilmavirta]{Joonas Ilmavirta}
\address{J. Ilmavirta: Unit of Computing Sciences, Tampere University
\linebreak
Tampere, FI-33014, Finland 
   (\tt{joonas.ilmavirta@jyu.fi})}
\author[M. Lassas]{Matti Lassas}
\address{M. Lassas: Department of Mathematics and Statistics, University of
\linebreak
 Helsinki, Helsinki, FI-00014, Finland   (\tt{matti.lassas@helsinki.fi})}
\author[T. Saksala]{Teemu Saksala}
\address{T. Saksala (corresponding author): Department of Mathematics,
  \linebreak
  North Carolina State University, Raleigh, NC 27607, USA
  \linebreak
  (\tt{tssaksal@ncsu.edu})}
\newcommand{\N}{\mathbb N}
\newcommand{\R}{\mathbb R}
\newcommand{\abs}[1]{\left\lvert #1 \right\rvert}
\newcommand{\der}{\mathrm d}
\newcommand{\ip}[2]{\left\langle#1,#2\right\rangle}
\newcommand{\sisus}{\operatorname{int}}
\newcommand{\id}{\operatorname{id}}
\newcommand{\eps}{\varepsilon}
\newcommand{\p}{\partial}
\newcommand{\inwb}[1]{\partial_{\mathrm{in}}SM_{#1}}
\newcommand{\inwbc}[1]{\partial_{\mathrm{in}}S{#1}}
\newcommand{\outwb}{\partial_{\mathrm{out}}SM}
\newcommand{\diffeo}{\xi}
\newcommand{\Diffeo}{\Xi}
\newcommand{\ddiffeo}{\eta}
\newcommand{\dm}{\phi}
\newcommand{\cut}{\tau_{\mathrm{cut}}}
\newcommand{\bou}{\tau_{\p M}}
\newcommand{\exit}{\tau_{\mathrm{exit}}}
\begin{document}

\begin{abstract}
The broken scattering relation consists of the total lengths of broken geodesics that start from the boundary, change direction once inside the manifold, and propagate to the boundary. We show that if two reversible Finsler manifolds satisfying a convex foliation condition have the same broken scattering relation, then they are isometric. This implies that some anisotropic material parameters of the Earth can be in principle reconstructed from single scattering measurements at the surface.
\end{abstract}

\maketitle
%
%\footnote{Muutin tau-makroja. Tekstin kannattaa olle komennolla mathrm. Tällöin teksti tulee oikeassa koossa eikä se kursivoidu vaikka ympäristö olisi kursivoitu.}
%
%\footnote{Lopuksi: font 11 pt, height 615 pt, width 360 pt. Toivottavasti alle 20 sivua.}
%
\section{Introduction}

% \footnote{Pure and Applied Analysis, Daniel Tataru}

% \footnote{Teemu: New content in red}

% \footnote{Teemu: Should we address referees comments (2) and (6, about the reversibility)? I believe all the rest have been addressed. // Joonas: Not very important in my opinion. I mentioned in a footnote where we could bring up the possibility.}

The broken scattering relation of a Finsler manifold with boundary describes all the scenarios where a geodesic starts inward at the boundary, changes direction at some point in the interior, and makes its way back to the boundary.  
%The broken scattering relation contains the two inward-pointing vectors corresponding to the two endpoints and the total travel time of an once broken geodesic.
The objective in this paper is to reconstruct the Finsler manifold from such data, provided that some assumptions are met.
%Geophysically, this corresponds to reconstructing material parameters of an anisotropic elastic medium from single scattering data. We prove that the broken scattering relation indeed determines a Finsler manifold uniquely up to isometry, provided that some assumptions are met.
We have two key assumptions. One is that the Finsler geometry is reversible, meaning that the Minkowski norm on each tangent space is symmetric ($\abs{-v}=\abs{v}$) or, equivalently, that the reverse of a geodesic is a geodesic.
%It was shown in~\cite{de2019inverse} that any\footnote{Sujuvoitus? Kuis tää (TS)} Finsler metric arising from linear elasticity is reversible.
We also assume that the manifold has a strictly convex foliation with a family of smooth hyper surfaces. 
%In seismology the commonly used Preliminary Reference Earth Model~\cite{dziewonski1981preliminary} is 
%If the geometry is spherically symmetric the foliation condition is known as the Herglotz condition.
%PREM satisfies this condition to great accuracy.
In this paper we will also describe an important relationship between Finsler geometry and elastic waves. 

%\section{The result}

\subsection*{Statement of the result}
Let $M$ be a smooth, compact manifold of dimension $3$ or higher, with smooth boundary $\p M$. We use the notation $TM$ for the tangent bundle of $M$. Recall that a Finsler metric $F\colon TM \to \R$ is a continuous positive function such that on each fiber $T_xM$ of $TM$ the function $F$ is a Minkowski norm meaning:
\begin{itemize}
    \item $F$ is smooth outside zero section.
    \item $F$ is positively homogeneous of order one, that is $F(x,av)=aF(x,v)$ for any $a>0$, $x \in M$ and $v \in T_xM$
    \item $F$ is convex in the sense that the local Riemannian metric
    \begin{equation}
    g_{ij}(x,v):=\frac{1}{2}\frac{\p^2}{\p v_i\p v_j}\left[F^2\right](x,v), \quad i,j \in \{1,\ldots, \dim M\}
    \end{equation}
    is positive definite for any $(x,v)\in TM$, $v\neq 0$. 
\end{itemize}
If $F$ is a Finsler metric we call the pair $(M,F)$ a Finsler manifold. If the Finsler metric is symmetric with respect to directional variable $v$ in the sense of $F(x,v)=F(x,-v)$ we call $F$ \textit{reversible} Finsler metric.

\begin{definition}
\label{def:foliation}
A Finsler manifold with boundary is said to have a strictly convex foliation if there is a smooth function $f\colon M\to\R$ so that
\begin{enumerate}[(i)]
\item $f^{-1}\{0\}=\partial M$,
%\item 
$f^{-1}(0,S]=\sisus(M)$,
%\item 
$f^{-1}(S)$ has empty interior
\item for each $s\in[0,S)$ the set $\Sigma_s\coloneqq f^{-1}(s)$ is a strictly convex smooth surface in the sense that $\der f\neq0$ and any geodesic $\gamma$, having initial conditions in $T\Sigma_s$, satisfies $\partial_t^2f(\gamma(t))|_{t=0}<0$.
\end{enumerate}
\end{definition}

In a similar fashion, we may define a strictly convex surface without a foliation.
For any surface $\Sigma$ there is a real-valued function $f$ defined in its neighborhood so that $f=0$ and $\der f\neq0$ on $\Sigma$.
The surface is called strictly convex if any geodesic $\gamma(t)$ tangent to $\Sigma$ at $t=0$ satisfies $\partial_t^2f(\gamma(t))|_{t=0}<0$.
%\footnote{One should also specify the orientation and have the differential of $f$ point in a corresponding direction.}

In particularly we say that a reversible Finsler manifold $(M,F)$ has a strictly convex boundary $\p M$ if 
\[
f(x)=d(x, \p M), \quad x \in U,
\]  
satisfies the former conditions near the boundary. In this case any pair of points in $M$ can be connected by a distance minimizing geodesic \cite{bartolo2011convex}.
%that the following are equivalent:
% \begin{itemize}
%     \item $\Sigma$ is strictly convex.
%     \item For any $p,q \in M$ there exists a distance minimizing geodesic contained in $M$ connecting $p$ to $q.$
% \end{itemize}
\color{black}

A manifold with a strictly convex foliation enjoys a certain weak non-trapping condition given in Lemma~\ref{lma:nontrap}.
If there is a trapped geodesic, it has to be asymptotically in the set $f^{-1}(S)$ where the foliating function~$f$ attains its maximum value, and it depends on the geometry of this set whether this is at all possible.

Next we describe what it means for two manifolds to have the same boundary data.
The Finsler function $F\colon TM\to\R$ describes the geometry, and boundary information requires knowledge of $F|_{\partial TM}$.
If we know the geometry of $\partial M$, we can deduce $F|_{T\partial M}$.
However, $T\partial M\subset\partial TM$ is a subbundle with fiberwise codimension one.
In Riemannian geometry the knowledge of the metric on $T\partial M$ determines uniquely the metric on $\partial TM$ in boundary normal coordinates.
In Finsler geometry it does not, because the geometry does not factorize as a product of tangential and normal directions of the boundary.
%
%A diffeomorphism $\dm \colon M_1\to M_2$ induces a diffeomorphism $\der \dm \colon TM_1\to TM_2$, and its boundary restriction is a diffeomorphism $\der \dm \colon \partial TM_1\to\partial TM_2$.
Also a diffeomorphism $\diffeo\colon\partial M_1\to\partial M_2$ only induces a diffeomorphism $\der\diffeo\colon T\partial M_1\to T\partial M_2$, and that is not enough for our purpose. Thus we introduces the following compatibility condition.

\begin{definition}
\label{def:compatible-diffeomorphism}
Let $M_1$ and $M_2$ be two smooth manifolds with boundary.
We say that a diffeomorphism $\Diffeo\colon\partial TM_1\to\partial TM_2$ is compatible with a diffeomorphism $\diffeo\colon\partial M_1\to\partial M_2$ if $\Diffeo$ is a linear isomorphism on every fiber and satisfies $\Diffeo(T\partial M_1)=T\partial M_2$ and $\Diffeo|_{T\partial M_1}=\der\diffeo$.
\end{definition}

Before showing that the broken scattering relation determines a Finsler manifold, we define what the relation is.
To this end, let $\phi_t\colon SM\to SM$ 
%\footnote{or the tangent bundle or co-versions if we so decide} 
be the geodesic flow.
The natural projection of the tangent bundle will be denoted by $\pi\colon TM\to M$.
We will write elements of the tangent bundle either as sole vectors $v$ (which is in $T_xM$) or as pairs $(x,v)$, whichever is more convenient.

The boundary of the sphere bundle is
\begin{equation}
\partial SM
=
\{v\in SM;\pi(v)\in\partial M\}.
\end{equation}
We identify the inward-pointing part of this boundary,
\begin{equation}
\inwb{}
=
\{v\in\partial SM;\ip{v}{\nu}_\nu>0\},
\end{equation}
where $\nu$ is the inward pointing normal vector field and 
$
\ip{v}{\nu}_\nu:=g_{ij}(\nu)\nu^iv^j.
$
Similarly, we define the outward-pointing part
\begin{equation}
\outwb
=
\{v\in\partial SM;\ip{v}{\nu}_\nu<0\}.
\end{equation}

\begin{definition}
\label{def:R}
Let $(M,F)$ be a Finsler manifold with boundary.
For each $t>0$ we define a relation $R_t$ on $\inwb{}$ so that $v R_t w$ if and only if there exist two numbers $t_1,t_2>0$ for which $t_1+t_2=t$ and $\pi(\phi_{t_1}(v))=\pi(\phi_{t_2}(w))$.
We call this relation the broken scattering relation.
\end{definition}

Our main result is the following theorem stating that the  broken scattering relation, that is, the lengths of the broken geodesics, determine uniquely the isometry type of a Finsler manifold.

\begin{theorem}
\label{thm:main}
Let $(M_i,F_i), \: i \in \{1,2\}$ be two compact Finsler manifolds of dimension larger or equal to $3$, with boundary.
We assume the following:
\begin{enumerate}[(i)]
\item Both Finsler functions $F_1$ and $F_2$ are reversible.
\item The manifolds $(M_i,F_i), \:  i \in \{1,2\}$ have strictly convex foliations in the sense of definition~\ref{def:foliation}.
\item There are diffeomorphisms $\diffeo\colon\partial M_1\to\partial M_2$ and $\Diffeo\colon\partial TM_1\to\partial TM_2$ that are compatible in the sense of definition~\ref{def:compatible-diffeomorphism}.
\label{cond_0}
\item $F_1=F_2\circ\Diffeo$ on $\partial TM_1$. 
\label{cond_1}
\item For any two vectors $v,w\in\inwb{1}$ and $t>0$ we have $vR_t^{(1)}w$ if and only if $\Diffeo(v)R_t^{(2)}\Diffeo(w)$, where $R^{(i)}_t$ is the broken scattering relation of definition~\ref{def:R} on $(M_i,F_i)$.
\label{cond_2}
\end{enumerate}
Then there is a diffeomorphism $\dm\colon M_1\to M_2$ that is an isometry in the sense of $F_1=F_2\circ \der \dm$, which satisfies $\dm|_{\partial M_1}=\diffeo$, and $\der\dm|_{\partial TM_1}=\Diffeo$.
%\footnote{Done, remark \ref{rmk:differentials_agree}: Prove these properties. Use the macros: $\dm$ is 'dm', $\diffeo$ is 'diffeo', and $\Diffeo$ is 'Diffeo'.}
\end{theorem}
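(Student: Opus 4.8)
\emph{Overview of the plan.} I would deduce the theorem from a single principle: the broken scattering relation, read through the boundary identification $\Diffeo$, determines the boundary distance functions of all interior points, and then a reconstruction of a reversible Finsler manifold from its boundary distance functions finishes the job. Reversibility is what makes the ``broken geodesic'' picture symmetric --- a broken geodesic is literally two minimizing geodesic arcs joined at an interior point, each traced out from the boundary --- while the strictly convex foliation is what guarantees both that the relevant minimizing arcs exist inside $M$ and that the resulting data is rich enough to be inverted, via a layer--stripping induction along the leaves $\Sigma_s$.

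\emph{Step 1: geometric preliminaries.} First I would record the consequences of (i)--(ii). By reversibility each $d_i$ is a genuine symmetric distance. By strict convexity of the foliation together with \cite{bartolo2011convex}, any two points of $M_i$ are joined by a minimizing geodesic lying inside $M_i$; differentiating $f$ twice along geodesics shows moreover that inside a thin collar $\{f<\varepsilon\}$ every geodesic arc is minimizing, two distinct such arcs meet at most once, and a geodesic tangent to a leaf $\Sigma_s$ bends strictly into $\{f<s\}$, so a geodesic entering a slightly deeper shell must leave it again. These facts both localize the data near a leaf and drive the induction.

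\emph{Step 2: extracting distances from $R$.} Fix $v\in\inwb{i}$ and a small $s>0$ and put $p=\gamma_v(s)$, so $d_i(\pi v,p)=s$ by Step 1. The obstacle is that $vR^{(i)}_t w$ records only the \emph{sum} $t_1+t_2$ of the two travel times, not the meeting point; so I would first reconstruct, for each such $p$, the whole pencil
\[
\mathcal F_p \;=\; \{(w,\tau)\in\inwb{i}\times(0,\infty):\ \gamma_w(\tau)=p\}
\]
of geodesics hitting $p$ from the boundary. The point is that $\mathcal F_p$ admits an intrinsic description: it is a maximal subset $\mathcal F$ containing $(v,s)$ such that $(w',\tau'),(w'',\tau'')\in\mathcal F$ implies $w'R^{(i)}_{\tau'+\tau''}w''$, and here $\dim M\ge 3$ and strict convexity are needed to rule out spurious maximal families. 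Since $\mathcal F_p$ determines $p$ (distinct interior points are met by different pencils of boundary geodesics) and $d_i(x,p)=\min\{\tau:(w,\tau)\in\mathcal F_p,\ \pi w=x\}$, the relation $R^{(i)}$ determines the family $\{(\mathcal F_p,\,r_p):p\in M_i\}$ with $r_p\coloneqq d_i(\,\cdot\,,p)$; and hypothesis (v) matches these two families across $i=1,2$ through $\Diffeo$. (Equivalently one may package the recovered data as the distance sum functions $(x,y)\mapsto d_i(x,p)+d_i(p,y)$.)

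\emph{Step 3: reconstruction, and the main obstacle.} With the boundary distance functions $\{r_p\}$ in hand and identified across $M_1,M_2$, one invokes --- or, if needed, establishes --- the boundary distance representation for reversible Finsler manifolds, the analogue of Katchalov--Kurylev--Lassas whose hypotheses are supplied precisely by reversibility and the convex foliation; this recovers the topological, smooth and metric structure of each $M_i$ from the image of $p\mapsto r_p$ in $C(\partial M_i)$, and yields a metric isometry $\dm\colon M_1\to M_2$ with $\dm|_{\partial M_1}=\diffeo$. Since a distance--preserving diffeomorphism of reversible Finsler manifolds is a Finsler isometry, $F_1=F_2\circ\der\dm$; and because (iii)--(iv) pin down $F_i$ on \emph{all} of $\partial TM_i$ through the fiberwise--linear $\Diffeo$ --- and not merely on $T\partial M_i$, which in Finsler geometry would be insufficient --- one obtains $\der\dm|_{\partial TM_1}=\Diffeo$ as well. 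The genuine difficulty, where every hypothesis is used, is Step 2 and the layer--stripping that justifies it: one must show that the minimal--time broken geodesics seen by $R$ are exactly joins of two minimizing arcs at an interior point (no non-minimizing detour is ever shorter) and that the pencils $\mathcal F_p$ are correctly separated; near $\partial M$ this is immediate from Step 1, and to advance past a leaf $\Sigma_s$ one uses that geodesics grazing $\Sigma_s$ probe only a thin shell beyond it whose geometry is thereby determined, carrying the identification $\Diffeo$ through each stage.
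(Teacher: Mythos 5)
The overall outline you give---extracting boundary distance functions from the broken scattering relation via pencils of boundary geodesics meeting at a common interior point, then reconstructing from the boundary distance data---matches the paper's strategy, and your Step 2 is a reasonable sketch of the paper's focusing-family argument (Proposition~\ref{prop:R-to-bdf}). But there is a genuine gap in Step 3, and it is the step where Finsler geometry departs most sharply from the Riemannian case.

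You claim that the boundary distance representation ``recovers the topological, smooth \emph{and metric} structure of each $M_i$'' and hence yields a metric isometry $\dm$, after which $F_1=F_2\circ\der\dm$ follows from a Myers--Steenrod-type argument. This overreaches. For Finsler manifolds the boundary distance functions determine the smooth structure, but they determine the Finsler function only on the \emph{good set} $G$ of directions whose maximal geodesic is a minimizer to the boundary; that is precisely the content of Lemma~\ref{lma:bdf-result} (Theorem 1.3 of \cite{de2019inverse}), and the paper emphasizes it in the ``Contrast to earlier Riemannian result'' discussion. In particular you never obtain interior-to-interior distances, so you have no distance-preserving diffeomorphism to feed into a Myers--Steenrod argument, and you cannot conclude $F_1=F_2\circ\der\dm$ at this stage. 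The missing ideas are exactly the paper's Lemmas~\ref{lma:convex-half-sphere} and~\ref{lma:propagation}: reversibility is used to replace $G$ by its symmetrization $\sigma(G)$, strict convexity of the boundary shows $\sigma(G)$ has full fibers in a collar of $\partial M$, and the foliation plus a propagation lemma let you strip off the layer where $F_1=F_2$ is already known and re-run the whole argument on the shrunk manifold $f^{-1}([s,S])$, propagating the broken scattering relation through the known strip. Your text gestures at layer-stripping, but locates it as a device to ``justify Step 2'' (i.e.\ to separate pencils and validate minimality of broken arcs); in the paper, layer-stripping is what closes the gap between ``$F$ known on $G$'' and ``$F$ known everywhere,'' and without naming that gap the proposal does not establish the theorem.

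A smaller point: your final sentence deduces $\der\dm|_{\partial TM_1}=\Diffeo$ from (iii)--(iv) essentially for free. The paper has to work for this too: after $\der\dm$ and $\Diffeo$ are seen to agree on $T\partial M_1$ via the tangential identification $\diffeo$, one still must show $\Diffeo$ sends the inward unit normal to the inward unit normal, which the paper does by differentiating the compatibility $F_1=F_2\circ\Diffeo$ along tangential directions and invoking the Legendre transform. That argument should appear, at least in sketch, to justify the last clause of the statement.
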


\begin{remark}
We note that to prove theorem \ref{thm:main} we will actually only need that $(M_1,F_1)$ has strictly convex foliation and $(M_2,F_2)$ has strictly convex boundary.

The requirement for the dimension $n\in \N$ of $M_i$ being larger than $2$ is a technical one. This is however a vital component of the proof of lemma \ref{lma:focusin_families_focus}. In the proof we need to assume that the dimension of a trace of geodesic, that is one, is strictly smaller than the dimension of the boundary $\p M_i$, which is $n-1$.   
\end{remark}

There are several well known examples of compact Riemannian manifolds satisfying a convex foliation condition of definition \ref{def:foliation}. One particularly interesting example to us is the closed unit ball $\bar B\subset \R^n$ with a radial metric $g(x)=c^{-2}(r)e$, where $r=|x| \in [0,1]$.  Here $c$ is a strictly positive smooth function on $[0,1]$ satisfying the Herglotz condition
\[
\frac{\der}{\der r}\left(\frac{r}{c(r)}\right)>0
\]
and $c'(0)=0$.
% \footnote{Reworded a bit. --J}
The condition does not forbid the existence of conjugate points but there are no trapped geodesics \cite{herglotz1905elastizitaet}.

Consider the smooth function $f(x)=|x|^2-1$ on $\bar B$.  It is well known that the radial Riemannian metric
% \footnote{I added a power $-2$ so that $c$ has the meaning of speed. Otherwise the Herglotz condition looks different! I also added "Riemannian" to emphasize. -- J} 
$g(x)=c^{-2}(|x|)e$ satisfies the Herglotz condition if and only if the function $f$ satisfies the condition of definition \ref{def:foliation}.
Due to convexity, in terms of definition \ref{def:foliation}, being an open property we have that the function $f$ is a strictly convex foliation of a Finsler manifold $(\bar B,F)$ for any Finsler function $F$ on $\bar B$ that is close enough to $g$ in the $C^2$-topology.
% \footnote{To make it more explicit, I put $k=2$. That is enough. --J}

Rotation symmetric Riemannian manifolds satisfying the Herglotz condition can have conjugate points.
Therefore some small Finslerian variations of such manifolds, at least those that preserve rotation symmetry, also have conjugate points.
Therefore our conditions do allow for conjugate points on non-Riemannian manifolds.
% \footnote{Added this paragraph. --J}
% \color{black}
%\footnote{Check in final formatting that the main theorem is not split on two pages.}

\subsection{Comparision of inverse problems for Riemannian and Finsler manifolds and the Riemannian counterpart of theorem \ref{thm:main}}
 Several inverse problems studied in literature have a different nature in Riemannian and Finslerian settings. 
For example, recall that a Riemannian surface is simple if it is 
 simply connected, its geodesic have no conjugate points and $\partial M$ is strictly convex. For the travel time inverse problem, discussed in detail below, it is shown in  \cite{pestov2005two} that
 the isometry type of a simple Riemannian manifold 
 is uniquely determined by the distances $d(x,y)$ of the boundary points $x,y\in \p M$, but there are counterexamples for this problem for simple Finsler manifolds~\cite{ivanov2013local}. Another example of the different nature of Riemannian and Finslerian settings is that the blow-up maps have been used to construct non-smooth counterexamples \cite{GLU,GKLU} for inverse problems for the Laplace-Beltrami operator, corresponding to a Riemannian metric. However, these maps can not be used for elastic equations, corresponding to a Finsler structure, as elastic media has to satisfy physical symmetries \cite{elasticcloaking}.

The Riemannian counterpart of Theorem \ref{thm:main}, presented in \cite{kurylev2010rigidity}, needs less assumptions. This is due to a certain rigidity of Riemannian geometry.
Heuristically, in Finsler geometry the metric in different directions at the same point are independent. This implies that in order to reconstruct a Finsler manifold in full, one needs access to the entire tangent bundle. To prove theorem \ref{thm:main} the first step is to show that the broken scattering relation determines the collection of distance functions at the boundary (see proposition \ref{prop:R-to-bdf}). However these distances only give an access for a certain open set of directions. 

The important feature of a Finsler function arising from a Riemannian metric tensor is that it is real analytic on every punctured tangent space.
If one makes this additional assumption, then the distance functions at the boundary determine the geometry at all points in all directions as proven in \cite{de2019inverse}. By lemma~\ref{lma:bdf-result} the distance function determines the Finsler function in directions where the geodesic is minimal to its endpoint on the boundary.
We call this the good subset of the tangent bundle.
These good directions exist on every tangent space, which is why analyticity gives access to the entire bundle.

The main result of \cite{de2019inverse} also asserts that the collection of the distance functions does not determine the Finsler function outside the good set of the tangent bundle.
\color{black}
Therefore on a general Finsler manifold one needs to study also long geodesics, and we have to go beyond the scope of \cite{kurylev2010rigidity}.
Reversibility and strict convexity give access to all directions near the boundary.
To go further into the manifold, we use the foliation: every point in the interior is close to the boundary when one goes deep enough in the foliation. 
%In order to make use of the foliation, one has to be able to propagate the data from the original boundary to a new one. Although the proof is based on recovering a Finsler function from boundary distance data, this data type does not propagate well. Propagation works at the level of broken scattering relations.
%
%The same issue of directionality plays a role in the formulation of the boundary information as well. Consider two Finsler manifolds $(M_i,F_i)$ with boundary, $i=1,2$. Suppose there is diffeomorphism $\diffeo\colon\partial M_1\to\partial M_2$ so that $F_1=\der\diffeo^*F_2$ on $T\partial M_1$. If the Finsler structures are Riemannian, then there exists a compatible diffeomorphism $\Diffeo\colon\partial TM_1\to TM_2$ so that $F_1=\Diffeo^*F_2$. This is due to the fact that a Riemannian metric at a boundary point can be written as a sum of a metric on the boundary and a metric in the normal direction. In general Finsler geometry such a diffeomorphism $\Diffeo$ need not exist. Therefore, in order to capture the geometry in all directons at boundary points, more assumptions are needed in a Finsler version of the result. \footnote{Is the last paragraph redundant?}
%
\subsection{Elasticity and Finsler geometry}
\label{ssec:elasticity}

%We study an inverse problem in differential geometry arising from seismic imaging with single scattering.
%It turns out that careful modeling of such a problem in an anisotropic elastic medium leads to a special kind of Finsler geometry; see~\cite{de2019inverse}.

%It turns out that the correct setting to study such problems in an anisotropic elastic material (see ...) is Finsler geometry.

%Riemannian geometry is not general enough to describe an anisotropic elastic material (see \cite{edellinen paperi}), so we are led to study the problem in the setting of Finsler geometry.

One can define elastic geometry in terms of distance:
The distance between two points can be declared to be the shortest time it takes for the support of a solution to the elastic wave equation, originally supported at one point, to reach the other point.

A more tractable description is obtained by studying the propagation of singularities of the elastic wave equation. 
There is a concrete way to pass from the stiffness tensor to a Minkowski norm on each tangent space, and this is described in~\cite{de2019inverse}.  
Microlocal analysis indicates \cite{duistermaat1996fourier, greenleaf1993recovering} that singularities follow the geodesic flow on the cosphere bundle of a Finsler metric, and cospheres are known as slowness surfaces in the physical literature. Elastic waves have three different polarizations in three spatial dimensions. The singularities used in the derivation of the Finsler geometry correspond to the fastest polarization known as quasi-pressure or qP.
%Indeed, we see it as Fermat's principle that singularities travel ``straight'' (along geodesics) in the relevant geometry.

%This construction only produces a restricted subclass of Finsler geometries. The additional properties include reversibility and analyticity on each fiber. Unless the components of the stiffness tensor are analytic functions of position, the Finsler function is not analytic with respect to the base variable. Studying only a limited class of Finsler geometries helps strike a balance between tractability and applicability so that proofs are within reach and physically meaningful. 
In seismology the commonly used Preliminary Reference Earth Model~\cite{dziewonski1981preliminary} is spherically symmetric, and satisfies a foliation condition, the Herglotz condition, to great accuracy.
%
%
%Elastic waves have three different polarizations in three spatial dimensions.
%The singularities used in the derivation of the Finsler geometry correspond to the fastest polarization known as quasi-pressure or qP.
%If the fastest polarization is non-degenerate (as is the case for most anisotropic materials), one obtains a valid Finsler geometry.
%Other polarizations can behave worse due to non-convexity and intersection of slowness surfaces.
%
A typical inverse problem in elasticity is to reconstruct some elastic properties from boundary data \cite{bal2015reconstruction, bao2018direct, barcelo2018uniqueness, beretta2014uniqueness, hu2014recovering, liu2017decoupling, mazzucato2007uniqueness, nakamura2003global}.
Given this geometric point of view, the task is to find the Finsler geometry corresponding to the parameters.  

It was shown in \cite{de2019inverse} that Finsler metrics arising from elasticity are always reversible
% \footnote{Or reversible?} 
and fiberwise real-analytic. The following result is an adaptation of theorem \ref{thm:main} in the elastic setting:
\begin{theorem}
\label{thm:elastic}
Let $(M_i,F_i), \: i \in \{1,2\}$ be two compact Finsler manifolds of dimension larger or equal to $3$, with strictly convex boundaries.  
We assume the following:
\begin{enumerate}[(i)]
\item Both Finsler functions $F_1$ and $F_2$ are reversible.
\item The manifolds $(M_i,F_i), \:  i \in \{1,2\}$ are fiberwise real analytic. That is for every $x \in M_i$ the function
\[
F_i(x,\cdot) \colon T_xM_i \to \R
\]
is real analytic.
\item There are diffeomorphisms $\diffeo\colon\partial M_1\to\partial M_2$ and $\Diffeo\colon\partial TM_1\to\partial TM_2$ that are compatible in the sense of definition~\ref{def:compatible-diffeomorphism}.
\item $F_1=F_2\circ\Diffeo$ on $\partial TM_1$. 
\item For any two vectors $v,w\in\inwb{1}$ and $t>0$ we have $vR_t^{(1)}w$ if and only if $\Diffeo(v)R_t^{(2)}\Diffeo(w)$, where $R^{(i)}_t$ is the broken scattering relation of definition~\ref{def:R} on $(M_i,F_i)$.
\end{enumerate}
Then there is a diffeomorphism $\dm\colon M_1\to M_2$ that is an isometry in the sense of $F_1=F_2\circ \der \dm$, which satisfies $\dm|_{\partial M_1}=\diffeo$, and $\der\dm|_{\partial TM_1}=\Diffeo$.
%\footnote{Done, remark \ref{rmk:differentials_agree}: Prove these properties. Use the macros: $\dm$ is 'dm', $\diffeo$ is 'diffeo', and $\Diffeo$ is 'Diffeo'.}
\end{theorem}

In the elastic setting no foliation condition is needed; it can be replaced with fiberwise analyticity.
But we do point out that with the foliation condition we have direct access to the Finsler function on all of the tangent bundle, whereas without the foliation condition we have to resort to analytic continuation which may be problematic for applications.
If our methods are used outside Hookean elasticity, the analyticity property 
and reversibility
may no longer be available.
% \footnote{Added paragraph. This is where reversibility could be mentioned if at all. Just uncomment one line to add reversibility mentioned; I have nothing against it. --J}

\color{black}

%Our main result (theorem~\ref{thm:main}) achieves precisely this when we are given single scattering data in the form of the broken scattering relation.

%\footnote{Sujuvoitus? Kuis tää (TS)} Finsler metric arising from linear elasticity is reversible.

\subsection{Related problems}

Let us consider a compact Riemannian manifold $(M,g)$ with boundary. The classical boundary rigidity problem is the following: We assume that we are given the distances $d(x, y)$, through $M$, of all boundary points $x,y \in \p M$. Can we determine the isometry type of the manifold $(M,g)$? Michel \cite{michel1981rigidite, michel1994restriction} observed that in the case of simple manifolds these distance functions also determine the values of the geodesic flow at the boundary, this is the scattering relation or lens relation:
\[
L = \{(v,w,t)\in \inwb{}\times \outwb{}\times \R: \phi_t(v)=w \hbox{ for some } t \leq 0\}.
\]
Thus $L$ carries the information when and where and in which direction a geodesic, sent from the boundary, hits the boundary again. 

The natural conjecture is that for simple manifolds the scattering relation determines the isometry type of the manifold (see \cite{burago2010boundary, croke1991rigidity, gromov1983filling, lassas2003semiglobal, michel1981rigidite, mukhometov1981problem, pestov2005two, stefanov2005boundary}). If the manifold is trapping one cannot determine the metric up to isometry if only the scattering relation is known  \cite{croke1992conjugacy}.   
 
\color{black}
%In dimension larger than two this inverse problem is known to be uniquely solvable for pair of metrics in an open and dense set and locally near an open dense set of simple and a class of nonsimple manifolds [41], [43]. However, it is known that in the general case the scattering relation does not determine the isometry class of the manifold [15]. 

On the other hand if a Riemannian manifold admits a suitable convex foliation condition, then a local version of the scattering rigidity problem, studied in \cite{stefanov2016boundary, stefanov2017local, uhlmann2016inverse}, imply the global boundary rigidity result. See for instance \cite[Section 2]{paternain2019geodesic} for a survey of different types of foliation conditions and geometric properties that imply their existence.  

%It was shown in \cite{guillemin1977sojourn}, under some conditions, 
%%(see also [2], [3]) 
%that the wave front set of the scattering operator associated to the wave equation for the Laplace-Beltrami operator of a smooth Riemannian metric determines the scattering relation. In the case of a very heterogeneous media with many scattering points inside the manifold one can obtain further information by looking at the propagation of singularities of waves going through the manifold. This is the broken scattering relation. 
Microlocal analysis connects singularities of solutions and solution operators to (hyperbolic) partial differential equations (PDEs) to geometry. The PDE related to our problem is the elastic wave equation. Its principal symbol is a matrix, which largest
eigenvalue is directly related to the Finsler metric \cite[Section 2]{de2019inverse}. Single scattering can be modeled by introducing, in the
elastic wave equation, a right-hand side representing a contrast source. The coefficients in this source, identified with the contrast in stiffness tensor, have a nonempty wavefront set. In the inverse problem studied in this paper, this wavefront set is assumed to be dense on the cosphere bundle associated with $M$. Thus in the case of very heterogeneous media with many scattering points inside the manifold one can obtain further information by looking at the propagation of singularities of waves going through the manifold. This is the broken scattering relation. 
Kurylev-Lassas-Uhlmann showed in \cite{kurylev2010rigidity} that this relation determines Riemannian manifold $(M,g)$, upto an isometry. They reduce the problem to the setup of \cite{kurylev1997multidimensional, Katchalov2001}, where it is shown that the collection of distance functions at the boundary determine the isometry class. In a sense they turn a data given by boundary sources, to one given by interior point sources. For inverse problems related to interior point sources and Riemannian wave equation see for instance \cite{maarten2019inverse, ivanov2019distance, LaSa,  lassas2018reconstruction, oksanen2011inverse}, and \cite{de2019inverse} for an interior source problem in Finsler geometry. 

%The problem of determining the Riemannian structure from the hyperbolic Dirichlet-to-Neumann or Neumann-to-Dirichlet maps has been studied extensively. The reconstruction is based on \textit{Boundary control method} introduced by Belishev and Kurylev on pioneering works \cite{belishev1987approach, belishev1992reconstruction}. 

%\footnote{Joonas would you like to add here something about Broken ray tensor tomography? (TS)}
 
\section{Outline of the proof}

We prove theorem~\ref{thm:main} in this section.
The proofs of the key lemmas are postponed to subsequent sections.
The rough plan of the proof is as follows:

\begin{itemize}
%\item The aim is to show that the broken scattering relation and knowledge of the Finsler function on $\partial TM$ determine $(F,M)$.
\item The first step is to verify that the broken scattering relation determines the boundary distance functions $\{d(x,\cdot)\colon \p M \to \R: x \in \hbox{int } M\}$. Then we show that the boundary distance function determines the topological and smooth structures. This is based on earlier work \cite{de2019inverse}. 
\item By the previous result the boundary distance function determines $F$ on the part of $TM$ from where the geodesic flow reaches the boundary in a sufficiently short time. This part is known as the ``good set'' $G\subset TM\setminus0$.
\item If a point $x\in M$ is sufficiently close to the strictly convex boundary, then more than half of the directions on $T_xM$ belong to $G$, so $F$ is determined there. 
\item By reversibility $F$ is determined on all of $T_xM$. This implies that we have found the Finsler geometry in all directions in a neighborhood of the boundary. Using the foliation, we write the neighborhood as  $f^{-1}([0,\eps])$ for some $\eps>0$.
\item We may then ask how far in the foliation the metric is uniquely determined. By the previous argument it holds at least for a little bit. If it only holds up to some $s<S$, then we may reiterate the argument on the smaller manifold $f^{-1}([s,S])$. To do so, we must propagate the data from the original boundary inward to the new one. Thus the uniqueness extends beyond the alleged limit $s$, proving uniqueness on the whole manifold.
%\footnote{I omitted the last bullet as it is too detailed (TS)}
%\item The small extension forward from $s$ has a minor issue, though: The Finsler metric on the shrunk manifold is only determined up to a diffeomorphism. However, the diffeomorphism from the original full manifold is ``canonical'', so that no freedom of diffeomorphisms remains on the shrunk one. This is based on two observations: any point can be connected to the boundary by a geodesic whose lift is in the good set, and the good set of the shrunk manifold has smaller base but bigger fibers than the original one.
\end{itemize}
%%\footnote{Rethink: Should some combine?}

%OLD:
%\begin{itemize}
%\item The aim is to show that the broken scattering relation and knowledge of the Finsler function on $\partial TM$ determine $(F,M)$.
%\item It follows from lemma~\ref{lma:R-to-bdf} that the broken scattering relation determines the boundary distance function.
%\item By a previous result the boundary distance function determines determines $F$ on the part of $TM$ from where the geodesic flow reaches the boundary in a sufficiently short time.
%\item If a point $x\in M$ is sufficiently close to the boundary, then more than half of the directions on $T_xM$ are close to the boundary, so $F$ is determined there.
%\item By reversibility $F$ is determined on all of $T_xM$.
%\item This implies that we have found the Finsler geometry in all directions in a neighborhood of the boundary. Using the foliation, we write the the neighborhood as  $f^{-1}([0,\eps])$ for some $\eps>0$.
%\item We then repeat the process we did on $M=f^{-1}([0,S])$ on the slightly smaller manifold $f^{-1}([\eps,S])$. To do so, we must propagate the data from the original boundary inward to the new one.
%\item Carrying on inductively, we find the Finsler metric on all of $M$.
%\end{itemize}

We first give the lemmas and definitions needed to make the proof precise, then finish the proof, and finally complete the proof by proving the lemmas.

\subsection{Auxiliary results}

The boundary distance function of a point $x\in M$ is the function $r_x\colon\partial M\to\R$ defined by $r_x(y)=d(x,y)$. 
Notice that we assumed the Finsler metrics to be reversible, so it does not matter which direction we measure the distance in. 
%Also the compactness and convexity imply that for each two points $x,y \in M$ there exists a distance minimizing geodesic connecting them. 
In the cases where we are considering multiple manifolds, the boundary distance function of $x\in M_i$ is denoted by~$r_x^{(i)}$.

\begin{proposition}[Proven in section \ref{sec:From-BSR-to-BDF}]
\label{prop:R-to-bdf}
Let $(M_i,F_i)$, $i=1,2$, be two compact reversible Finsler manifolds, with strictly convex boundaries, whose broken scattering relations agree in the sense of conditions \textit{$\eqref{cond_0}$--\eqref{cond_2}} in theorem \ref{thm:main}.
%Let $(M_i,F_i)$ be two compact Finsler manifolds with strictly convex boundary.
%Suppose there is a diffeomorphism $\diffeo\colon\partial M_1\to\partial M_2$ and a compatible diffeomorphism $\Diffeo\colon\partial TM_1\to\partial TM_2$.
%Assume also that $F_1=F_2\circ\Diffeo$ on $\partial TM_1$ and for any two $v,w\in\inwb{1}$ and $t>0$ we have $vR_t^1w$ if and only if $\Diffeo(v)R_t^2\Diffeo(w)$.
Then the boundary distance functions agree in the sense that
\begin{equation}
\label{eq:BDF-agree}
\{
r_x^{(1)};x\in\sisus M_1
\}
=
\{
r_y^{(2)}\circ\diffeo;y\in\sisus M_2
\}.
\end{equation}
\end{proposition}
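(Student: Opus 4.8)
The goal is to recover, from the broken scattering relation alone, the set of boundary distance functions. The natural strategy is to express the value $d(x,y)$ for an interior point $x$ and a boundary point $y$ purely in terms of the relations $R_t$. The key observation is that a broken geodesic from $v\in\inwb{}$ to $w\in\inwb{}$ of total length $t$ with the break point at an interior point $z$ realizes $t = d(\pi(v),z) + d(z,\pi(w))$ along the two legs; and conversely, minimizing over all such broken trajectories with a prescribed break point $z$ gives $d(\pi(v),z)+d(z,\pi(w))$, provided the two legs are themselves minimizing. So the plan is: first, characterize intrinsically (in terms of $R_t$) which vectors $v,w$ and which time $t$ correspond to a broken geodesic whose two legs are each \emph{minimizing} geodesics from the boundary to a common interior point $z$; second, show that for such configurations the infimum of $t$ over all boundary data reaching a fixed "target" encodes $r_x$; third, transfer this through $\Diffeo$ using conditions \eqref{cond_0}–\eqref{cond_2}.

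More concretely, I would proceed as follows. \textbf{Step 1 (the good set and short geodesics).} Using strict convexity of the boundary, every point $z$ sufficiently near $\partial M$ is joined to $\partial M$ by a minimizing geodesic that stays in $M$; in fact, as noted in the outline, for $z$ near the strictly convex boundary more than half the directions in $T_zM$ are "good" (the geodesic exits minimally in short time), and by reversibility the reversed geodesics are good too. Thus there are many minimizing boundary-to-boundary broken geodesics breaking at $z$. \textbf{Step 2 (detecting minimality from $R_t$).} The crucial point is to recognize, using only the family $\{R_t\}$, when $v R_t w$ is witnessed by a \emph{pair of minimizing legs}. One direction of reasoning: if $v R_t w$ then $t \ge d(\pi(v),z)+d(z,\pi(w)) \ge d(\pi(v),\pi(w))$ for the actual break point $z$, so the minimal $t$ with $v R_t w$ is exactly the broken-geodesic distance; more importantly, one wants a "first arrival" criterion: the infimum $\rho(v,w) := \inf\{t : v R_t w\}$ is attained and equals $\min_z [d(\pi(v),z)+d(z,\pi(w))]$ over break points $z$ reachable along geodesics from $v$ and $w$. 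Combining these over varying $w$ (i.e., varying the second boundary point and direction) lets us isolate, for a fixed interior target point $x$, the quantity $d(\pi(v),x)$: one fixes a reference interior point by requiring simultaneous incidence of several broken geodesics, exactly as in the Riemannian argument of \cite{kurylev2010rigidity}, but here restricted to the good/short regime where minimality is automatic and detectable. The set of functions $y \mapsto \rho(v,w)$-type quantities, as the "hidden" break point ranges over $\sisus M$, then reproduces $\{r_x^{(1)} : x \in \sisus M_1\}$.

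\textbf{Step 3 (transfer).} Once $\{r_x^{(1)}\}$ is characterized intrinsically by the broken scattering data and the boundary metric $F_1|_{\partial TM_1}$, apply the same characterization on $(M_2,F_2)$; condition \eqref{cond_2} matches the relations $R_t^{(1)}$ and $R_t^{(2)}$ under $\Diffeo$, condition \eqref{cond_1} matches the boundary Finsler functions, and condition \eqref{cond_0} identifies the boundaries via $\diffeo$, so that the intrinsic construction yields the same set of functions on $\partial M_1\cong\partial M_2$, i.e. \eqref{eq:BDF-agree}. I expect the main obstacle to be \textbf{Step 2}: cleanly proving that $\rho(v,w)$ is attained and that attainment forces the two legs to be minimizing geodesics to a common point (ruling out, e.g., that a short total time is achieved by a non-minimizing concatenation, or that the break occurs at a conjugate/cut point), and then organizing the simultaneous-incidence bookkeeping so that a single interior point $x$ is pinned down from several boundary observations — all while staying inside the regime where strict convexity and reversibility guarantee the needed minimizing geodesics exist in $M$. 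The dimension hypothesis $\dim M \ge 3$ should enter here to give enough room for the incidence conditions to single out points.
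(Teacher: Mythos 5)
Your high-level strategy is right, and your Step~3 (transfer via $\Diffeo$) is essentially what the paper does in Lemma~\ref{lma:focusing-families-conicide} and the final proof.  But your Step~2 --- the part you yourself flag as the main obstacle --- is a genuine gap, and the object you propose, $\rho(v,w):=\inf\{t: vR_tw\}$, is not the right quantity and is not what the paper uses.  The problem is that $vR_tw$ only records that $\gamma_v$ and $\gamma_w$ intersect; the quantity $\rho(v,w)$ is the length of the shortest one-broken path with prescribed \emph{initial} directions $v,w$, which is not a boundary distance function and does not by itself single out any interior point $x$.  You cannot recover $r_x(\cdot)=d(x,\cdot)$ by ``combining these over varying $w$'' without a concrete mechanism for fixing $x$ from boundary data; saying the bookkeeping is ``exactly as in \cite{kurylev2010rigidity}'' leaves exactly the step that needs proving.

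The paper's solution is the machinery of \emph{families of focusing directions} (Definition~\ref{def:focusing_family}): a vector field $V$ and a time function $t$ on a boundary neighborhood $U$ of $z_0$ with $V(z_1)R_{t(z_1)+t(z_2)}V(z_2)$ for all $z_1,z_2\in U$, $V(z_0)=\nu(z_0)$, $t(z_0)=t_0$ and $\der t(z_0)=0$.  These families are read off from the relation alone, and the heart of the argument (Lemmas~\ref{Le:intersect_of_geo_and_foc_fam} and~\ref{lma:focusin_families_focus}, using $\dim M\ge3$ and reversibility) shows that when $t_0<\cut(z_0,\nu(z_0))$ they actually focus at a single interior point $x=\exp_{\partial M}(z_0,t_0)$; this is where a precise form of your ``simultaneous-incidence'' idea is carried out.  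From there one determines the exit time $\exit(v)=\sup\{t\ge 0: vR_{2t}v\}$, the boundary cut distance $\bou$ (Lemma~\ref{lma:R-to-BCD}), parametrizes $M$ via $\exp_{\partial M}$ on $\{(z,t): t\le\bou(z)\}$, and finally recovers $d(x,w)$ as $\inf\{s>0: \exists\,\eta\in S_wM \text{ and a focusing family } F(z_0,t_0) \text{ with } V(z)R_{s+t(z)}\eta\}$ (Proposition~\ref{pro:R-to-BDF}).  None of this is present in your proposal, and without it the proposition is not proved.
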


\begin{remark}
We note that in \eqref{eq:BDF-agree} we are given non-indexed sets. That is for a function $r_x\colon \p M\to \R$ we know whether it is included in the set \eqref{eq:BDF-agree}, but we do not know the indexing point $x \in M$. The map $\diffeo\colon\partial M_1\to\partial M_2$ is the same as in theorem \ref{thm:main}.
\end{remark}

We say that a direction $v\in TM\setminus0$ is minimizing if the maximal geodesic starting at $v$ reaches $\partial M$ in finite time and is a shortest curve joining its endpoints.
We denote the set of minimizing directions by $G$ (for ``good'') --- obviously with $G^{(i)}\subset TM_i\setminus0$.
The fibers are denoted by $G_x^{(i)}$.

\begin{lemma}[{\cite[theorem 1.3]{de2019inverse}}]
\label{lma:bdf-result}
Let $(M_1,F_1)$ and $(M_2,F_2)$ be two compact Finsler manifolds with boundary.
Suppose there is a diffeomorphism $\diffeo\colon\partial M_1\to\partial M_2$ so that \eqref{eq:BDF-agree} holds.
%\begin{equation}
%\{
%r_x^1;x\in\sisus M_1
%\}
%=
%\{
%r_y^2\circ\diffeo;y\in\sisus M_2
%\}.
%\end{equation}
Then there is a diffeomorphism $\dm \colon M_1\to M_2$ so that $\dm|_{\partial M_1}=\diffeo$.
In addition, $F_1(v)=F_2(\der\dm(v))$ for all $v\in G^{(1)}$.
\end{lemma}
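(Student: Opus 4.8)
The plan is to follow the reduction of Kurylev--Lassas--Uhlmann \cite{kurylev2010rigidity} from the broken scattering relation to the collection of boundary distance functions, with reversibility taking over the role that symmetry of the Riemannian distance plays there, and with strict convexity of the boundary (together with \cite{bartolo2011convex}) supplying an in-$M$ minimizing geodesic between any two points. Write $\gamma_v(t):=\pi(\phi_t(v))$ for the unit-speed geodesic issued from $v$. The first move is to reformulate the datum: for $v,w\in\inwb{}$ the set
\[
T(v,w):=\{\, t>0:\ v\,R_t\,w\,\}=\{\,t_1+t_2:\ t_1,t_2>0,\ \gamma_v(t_1)=\gamma_w(t_2)\,\}
\]
is precisely the set of lengths of broken geodesics whose first leg is $\gamma_v$ and whose second leg, traversed backwards, is $\gamma_w$ --- reversibility being what makes the reversed leg a unit-speed geodesic and makes the two leg-lengths add. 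Condition \eqref{cond_2} says $T^{(1)}(v,w)=T^{(2)}(\Diffeo v,\Diffeo w)$ for all $v,w\in\inwb{1}$; together with \eqref{cond_0}--\eqref{cond_1} (which identify the boundaries via $\diffeo$, and the Minkowski norms on $\partial TM$ via $\Diffeo$, so that $\Diffeo$ carries $\inwb{1}$ onto $\inwb{2}$) this gives a full dictionary between the two boundary data sets. Thus it suffices to describe a procedure that, from the family $\{T(v,w)\}$ and $F|_{\partial TM}$, produces the unindexed set $\{r_z:z\in\sisus M\}$; running it on $(M_1,F_1)$ and $(M_2,F_2)$ and transporting by $\diffeo,\Diffeo$ then yields \eqref{eq:BDF-agree}.

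Next I would recover the boundary-to-boundary distances. For $x\neq x'$ in $\partial M$ I claim $d(x,x')=\min\{\, t:\ v\,R_t\,w\ \text{for some}\ v\in\inwb{x},\ w\in\inwb{x'}\,\}$. The inequality ``$\ge$'' is clear, since any broken geodesic from $x$ to $x'$ has length at least $d(x,x')$. For ``$\le$'', take the in-$M$ distance-minimizing geodesic from $x$ to $x'$; by strict convexity it cannot be tangent to $\partial M$, so it leaves $x$ strictly inward and reaches $x'$ strictly transversally, and by reversibility its reversal is again a geodesic. With $v$ its initial velocity and $w$ minus its terminal velocity, $\gamma_v$ and $\gamma_w$ are the same geodesic run in opposite directions, so every interior point of it is a break point with total length exactly $d(x,x')$.

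The heart of the argument is recovering, for each $z\in\sisus M$, the function $r_z$. By \cite{bartolo2011convex}, every $y\in\partial M$ is joined to $z$ by an in-$M$ minimizing geodesic, and reversing it at $y$ (strict convexity makes the initial velocity inward) gives $w\in\inwb{y}$ with $\gamma_w(r_z(y))=z$ and $\gamma_w|_{[0,r_z(y)]}$ minimizing. I would pick three boundary points $x_1,x_2,x_3$ and let $v_i\in\inwb{x_i}$ be the reversal of such a minimizing geodesic from $z$ to $x_i$, choosing these (such choices exist for each $z$) so that $\gamma_{v_1},\gamma_{v_2},\gamma_{v_3}$ meet pairwise only at $z$ and each reaches $z$ exactly once, at time $t_i=d(x_i,z)=r_z(x_i)$. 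Then each $T(v_i,v_j)$ is the singleton $\{t_i+t_j\}$, so $t_1,t_2,t_3$ are recovered from $t_i=\tfrac12\big((t_i+t_j)+(t_i+t_k)-(t_j+t_k)\big)$, $\{i,j,k\}=\{1,2,3\}$. For a general $y\in\partial M$ I would then single out the directions $w\in\inwb{y}$ whose geodesic passes through $z$ by requiring that each $T(w,v_i)=\{A_i(w)\}$ be a singleton and that $A_1(w)-A_2(w)=t_1-t_2$ and $A_1(w)-A_3(w)=t_1-t_3$; for such $w$ one has $A_i(w)=\sigma_w+t_i$ with $\sigma_w$ the time at which $\gamma_w$ reaches $z$, so $\sigma_w=A_1(w)-t_1$ is recovered and
\[
r_z(y)=d(z,y)=\min\{\,A_1(w)-t_1:\ w\in\inwb{y}\ \text{admissible}\,\},
\]
the minimum being attained by the reversed minimizing $y$-to-$z$ geodesic. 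Letting $y$ range over a dense set of boundary points recovers $r_z$ on a dense set and hence, by continuity of $r_z$, everywhere; letting $z$ and the admissible triples vary yields all of $\{r_z:z\in\sisus M\}$. Note this uses only strict convexity of the boundary, not the foliation, as the proposition claims.

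The step I expect to be the main obstacle, and which I would import (suitably adapted) from \cite{kurylev2010rigidity}, is the bookkeeping needed to ``track the scattering point'' behind the relation: proving that for a suitably chosen triple aimed at $z$ the sets $T(v_i,v_j)$ really are singletons located at $z$, with no coincidental equal length-sums coming from re-intersecting or overlapping geodesics, cut points, or conjugate points; that the cross-difference conditions single out precisely the directions $w$ through $z$ and no spurious ones; and that the minimum defining $r_z(y)$ is attained, i.e.\ that the minimizing $z$-to-$y$ geodesic does contribute a break point to the relation. These are transversality and general-position statements, and it is here that strict convexity does the work; once they are in place, the reconstruction and its transport by $\diffeo,\Diffeo$ are routine, and the proposition follows.
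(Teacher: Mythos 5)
There is a fundamental mismatch between what you have proved and what the lemma asserts. Lemma~\ref{lma:bdf-result} takes the agreement of the boundary distance functions, i.e.\ \eqref{eq:BDF-agree}, as its \emph{hypothesis}, and its conclusion is the existence of a diffeomorphism $\dm\colon M_1\to M_2$ extending $\diffeo$ together with the identity $F_1(v)=F_2(\der\dm(v))$ on the good set $G^{(1)}$. Your argument runs in the opposite direction: you start from the broken scattering relation (conditions \eqref{cond_0}--\eqref{cond_2} of theorem~\ref{thm:main}) and reconstruct the unindexed set of boundary distance functions, ending with ``yields \eqref{eq:BDF-agree}.'' That is the content of proposition~\ref{prop:R-to-bdf}, which the paper proves separately in section~\ref{sec:From-BSR-to-BDF} via families of focusing directions; it is not the content of lemma~\ref{lma:bdf-result}. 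Nothing in your proposal addresses the two things the lemma actually requires: (a) how the unindexed collection $\{r_x\colon x\in\sisus M\}$ determines the topological and smooth structure of $M$ and produces the diffeomorphism $\dm$ (the standard route is to show $x\mapsto r_x$ is an embedding of $M$ into $C(\partial M)$, that $r_x=r_y$ forces $x=y$, and that distance functions to suitably chosen boundary points furnish smooth coordinates), and (b) why the recovered distance data pin down the Minkowski norm $F_1$ precisely on the minimizing directions $G^{(1)}$ (essentially by differentiating $t\mapsto d(\gamma_v(t),y)$ along a minimizing geodesic). The paper itself does not prove this lemma at all; it is imported wholesale as \cite[theorem 1.3]{de2019inverse}.

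If your intent was to prove proposition~\ref{prop:R-to-bdf} instead, your sketch is at least aimed at the right target, but it diverges from the paper's actual mechanism: the paper does not use triples of geodesics and cross-differences of singleton length-sets. It builds \emph{families of focusing directions} $F(z_0,t_0)=\{U,V(\cdot),t(\cdot)\}$ (definition~\ref{def:focusing_family}), proves via lemmas~\ref{Le:intersect_of_geo_and_foc_fam} and~\ref{Le:intersect_of_geo_and_foc_fam_part2} that such families genuinely focus at an interior point when $t_0<\cut(z_0,\nu(z_0))$, recovers the boundary cut distance $\bou$ (lemma~\ref{lma:R-to-BCD}), and then characterizes $d(x,w)$ as an infimum over relations $V(z)R_{s+t(z)}\eta$. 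The ``general position'' difficulties you flag at the end --- spurious intersections, cut points, conjugate points --- are exactly where your singleton-and-triple scheme is fragile (e.g.\ $T(v_i,v_j)$ need not be a singleton for long or re-intersecting geodesics), and the focusing-family machinery is the paper's way of circumventing them. In either reading, the lemma as stated remains unproved by your proposal.
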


For a set $A\subset T_xM$ we denote by $-A$ the reflection and by $\sigma(A)=A\cup -A$ the symmetrization.
We also use the same notation on the whole bundle, applied fiberwise.

\begin{lemma}[Proven in section~\ref{sec:convex-half-sphere}]
\label{lma:convex-half-sphere}
If a Finsler manifold $(M,F)$ has a strictly convex boundary, then the set
\begin{equation}
U
=
\{x\in M;\sigma(G_x)=T_xM\setminus0\}
\end{equation}
is a neighborhood of the boundary $\p M \subset M$.
\end{lemma}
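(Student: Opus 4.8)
We want to show that near a strictly convex boundary point, at least half of the sphere $T_xM \setminus 0$ (in the sense of symmetrization) consists of minimizing directions. Let me think about what the "good set" $G_x$ looks like near the boundary and why reversibility lets half suffice.

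First, consider a boundary point $p \in \partial M$. The inward-pointing unit vectors at $p$ that point "sufficiently tangentially" (close to $T_p\partial M$) — do short geodesics from them stay inside and return to the boundary minimizingly? Actually the key point is the opposite: vectors pointing strongly *inward* produce geodesics that may go deep and need not be minimizing. But vectors pointing *outward* (toward $\partial M$) from an interior point close to $p$: by strict convexity, the geodesic from such a vector exits $M$ quickly, and for a short enough geodesic segment joining two points it is distance-minimizing (geodesics are locally minimizing, and strict convexity ensures they don't re-enter $M$, so the segment inside $M$ coincides with the minimizing curve in $M$ by the Bartolo et al.\ equivalence cited in the excerpt). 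So roughly the "outward-ish" half of $T_xM$ lies in $G_x$.

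Here is the plan in steps.

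\begin{enumerate}[(1)]
\item \textbf{Reduce to a boundary-normal type argument.} Fix $p \in \partial M$. Using the foliation function $f$ (or simply a boundary defining function), for $x$ in a small neighborhood of $p$ and $v \in T_xM \setminus 0$, say $v$ is \emph{outgoing} if $\der f_x(v) < 0$, i.e.\ the geodesic $\gamma_{x,v}$ initially decreases $f$ and heads toward $\partial M$. The outgoing vectors form an open half-space (minus the hyperplane $\der f_x = 0$) of $T_xM \setminus 0$, and its closure is all of $T_xM \setminus 0$ after symmetrization: for any $v$, either $v$ or $-v$ is outgoing, unless $\der f_x(v) = 0$. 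So it suffices to show: every outgoing $v$ at every $x$ sufficiently close to $\partial M$ is minimizing, and then handle the measure-zero tangential hyperplane by a limiting/openness argument (or observe $G_x$ is open, hence if it contains a dense subset of a half-sphere it contains the closed half, and $\sigma(G_x)$ is then everything).

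\item \textbf{Outgoing short geodesics exit and are minimizing.} Let $v \in T_xM \setminus 0$ be outgoing at $x$ near $\partial M$. By strict convexity of $\partial M$ (Definition~\ref{def:foliation}(ii) applied at $s=0$, or the standalone strict convexity), there is a neighborhood $W$ of $\partial M$ and a constant such that any geodesic entering $W$ while decreasing $f$ reaches $\partial M$ in a uniformly bounded time without turning back; this is the standard consequence of $\partial_t^2 f(\gamma(t)) < 0$ along tangent initial conditions together with a compactness/continuity argument on the unit bundle over $\partial M$. Thus $\gamma_{x,v}$ hits $\partial M$ at some $y$ in short time $T$, with the whole segment $\gamma_{x,v}|_{[0,T]}$ contained in $M$. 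For $W$ small enough, $T$ is below the injectivity-type radius, so $\gamma_{x,v}|_{[0,T]}$ is the unique shortest curve in $M$ from $x$ to $y$ (here one invokes that a geodesic short enough is minimizing among nearby curves, and strict convexity prevents any competitor from leaving $M$ and coming back shorter — equivalently use the Bartolo--Germinario--Sánchez result quoted above that strict convexity of $\partial M$ makes $M$ geodesically convex in the minimizing sense). Hence $v \in G_x$.

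\item \textbf{Uniformity in $x$ and conclusion.} The time bound $T$ and the smallness of $W$ can be chosen uniformly by compactness of $\partial M$. So there is $\eps_0 > 0$ such that for all $x$ with $f(x) \le \eps_0$ (equivalently $x$ in a fixed neighborhood of $\partial M$), every outgoing $v \in T_xM$ lies in $G_x$. Since $G_x$ is open in $T_xM \setminus 0$ and contains the open half $\{\der f_x < 0\}$, its symmetrization $\sigma(G_x)$ contains $\{\der f_x < 0\} \cup \{\der f_x > 0\} = T_xM \setminus \ker \der f_x$; and as $\sigma(G_x)$ is open and contains this dense set, it equals $T_xM \setminus 0$. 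Therefore $f^{-1}([0,\eps_0]) \subset U$, so $U$ is a neighborhood of $\partial M$.
\end{enumerate}

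\textbf{Main obstacle.} The delicate step is (2): making precise, with a \emph{uniform} constant over the boundary, the claim that an outgoing geodesic segment near a strictly convex boundary is actually distance-minimizing \emph{inside $M$}, not merely a geodesic. Local minimality of short geodesics is standard, but one must rule out shorter competing paths that might dip toward $\partial M$; this is exactly where strict convexity enters, and the cleanest route is to cite the equivalence from \cite{bartolo2011convex} (strictly convex boundary $\iff$ every pair of points joined by a minimizing geodesic staying in $M$) and combine it with a short-time/small-neighborhood argument so that the minimizing geodesic between $x$ and the nearby exit point $y$ must be the short outgoing segment itself. The compactness argument giving uniformity of the exit time is routine but must be stated carefully since Finsler geodesic flow and the convexity inequality depend continuously on the basepoint in $\partial M$.
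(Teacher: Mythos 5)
Your overall plan matches the paper's in spirit: both prove that near the strictly convex boundary the exit time is small, short geodesics are minimizing (compactness), and the set of minimizing directions at each nearby $x$ is large enough that its symmetrization is everything. The paper achieves this by transporting directions inward along normal geodesics via a map $Q$ and composing with the exit time function, whose continuity on $SM$ (lemma~\ref{lma:exit-continuous}) is the key technical ingredient; your version is a reasonable reorganization of the same idea in terms of the sign of $\der f_x(v)$.

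However, step~(3) has a genuine logical gap. You conclude by arguing: ``$\sigma(G_x)$ is open and contains $T_xM\setminus\ker\der f_x$, which is dense, hence $\sigma(G_x)=T_xM\setminus 0$.'' That inference is false: an open dense subset of a manifold need not be the whole manifold (indeed $T_xM\setminus\ker\der f_x$ itself is open and dense but not everything). Openness of $G_x$ — which you assert without justification — does not help here, because an open set containing an open half-sphere tells you nothing about the tangential hyperplane. What is actually needed is that $G_x$ contains a set \emph{strictly larger} than a hemisphere, i.e.\ a set of the form $\{v:\ip{v}{\nu}_\nu<a\}$ with $a>0$, so that its union with its reflection covers the tangential directions with room to spare. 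This is exactly what the paper's argument produces: $\exit$ vanishes on the closed set $\{0\}\times\overline{\outwb}$ (tangential vectors included), and lemma~\ref{lma:exit-continuous} establishes continuity of $\exit$ at those tangential points — this is precisely where strict convexity does real work, ruling out geodesics that graze the boundary for a long time. Continuity at tangential directions then yields an open set $V$ around $\{0\}\times\overline{\outwb}$ with uniformly small exit time, and the sets $V_p$ built from $V$ strictly exceed a hemisphere. Alternatively, you could salvage your approach by showing that, for $x$ close to $\partial M$, the fiber $G_x$ is \emph{closed} (use continuity of $\exit$ and the fact that a limit of minimizing geodesics is minimizing), so containing the open half $\{\der f_x<0\}$ forces it to contain the closed half $\{\der f_x\le0\}$, after which symmetrization covers $T_xM\setminus0$. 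Either way, the tangential directions must be handled explicitly; they do not come for free from density.
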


\begin{lemma}[Proven in section~\ref{sec:propagation}]
\label{lma:propagation}
Let $F_1$ and $F_2$ be two Finsler functions on a compact smooth manifold $M$ with boundary.
Suppose $(M,F_1)$ has a strictly convex foliation with a function $f\colon M\to[0,S]$.
Let $s\in(0,S)$ and denote $\widehat{M}=f^{-1}([s,S])$.
Assume the following:
\begin{itemize}
\item The two metrics coincide below $s$ in the sense that $F_1=F_2$ on $T(M\setminus\widehat{M})$.
\item For any two $v,w\in \p_{in}SM$ (these bundles coincide for the two metrics) and $t>0$ we have $vR_t^{(1)}w$ if and only if $vR_t^{(2)}w$, where $R_t^{(i)}$ is the broken scattering relation of $(M,F_i)$.
\end{itemize}
Then the scattering relations $\widehat{R}_t^{(i)}$ on $(\widehat{M},F_i)$ coincide in the sense that the assumptions of theorem~\ref{thm:main} are valid with $\diffeo$ and $\Diffeo$ being the identity maps.
Most importantly, $F_1=F_2$ on $\partial T\widehat{M}$ and for all $v,w\in\inwbc{\widehat{M}}$ we have $v\widehat{R}_t^{(1)}w$ if and only if $v\widehat{R}_t^{(2)}w$.
\end{lemma}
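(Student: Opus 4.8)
The plan is to transport the scattering data from $\partial M$ to $\Sigma_s=\partial\widehat M$ through the collar $C:=f^{-1}([0,s])$, on which $F_1=F_2$ (by continuity from the hypothesis). First I record the collar geometry. Strict convexity of the foliation below level $s$ forces $f$ to behave monotonically: at any critical point of $f\circ\gamma$ whose value lies in $[0,S)$ that value is a strict local maximum, so $f\circ\gamma$ has no interior local minimum below level $S$; hence once a unit-speed geodesic descends in $f$ inside $C$ it keeps descending and exits through $\partial M$ in finite time, and dually every maximal geodesic of $M$ meets $\widehat M$ along a single connected parameter interval whose two endpoints lie on $\Sigma_s$. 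In particular, for $v\in\inwbc{\widehat M}$ — which is \emph{strictly} inward, so $-v$ is transversal to $\Sigma_s$ — the geodesic through $v$, run backward, reaches $\partial M$ after a definite time $\ell(v)\ge 0$ at an inward vector $\hat v\in\inwb{}$; since $F_1=F_2$ on $TC$ and (by continuity, hence agreement of all jets along $\Sigma_s$) the two metrics have the same geodesics up to and including $\Sigma_s$, both $\hat v$ and $\ell(v)$ are independent of $i$, and $v\mapsto\hat v$ is injective. All conclusions of the lemma other than the one on $\widehat R$ follow at once: $F_1=F_2$ on $\partial T\widehat M=\bigcup_{x\in\Sigma_s}T_xM$ by continuity from $TC$; reversibility descends; $\diffeo=\Diffeo=\id$ are compatible; and $\Sigma_s$ is strictly convex for $F_2$ because it is for $F_1$ and the metrics agree to infinite order along it, so the restriction of $f$ is a strictly convex foliation of $(\widehat M,F_1)$ and the remark following Theorem~\ref{thm:main} covers $(\widehat M,F_2)$.

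Next I set up the correspondence between broken geodesics. Put $T:=t+\ell(v)+\ell(w)$; I claim that for each $i$ the relation $v\,\widehat R^{(i)}_t\,w$ holds if and only if $\hat v\,R^{(i)}_T\,\hat w$ is realized by a meeting point lying in $\widehat M$. The ``only if'' direction is routine: a broken $F_i$-geodesic in $\widehat M$ from $v$ to $w$ breaking at $z\in\widehat M$ with leg times summing to $t$ extends, on prepending the collar segments from $\partial M$ up to $\pi(v)$ and $\pi(w)$, to a broken $F_i$-geodesic of $M$ from $\hat v$ to $\hat w$ breaking at the same $z$; the collar monotonicity ensures neither extended leg leaves $M$ before reaching $z$, so $\hat v\,R^{(i)}_T\,\hat w$. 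For the ``if'' direction, if $\hat v\,R^{(i)}_T\,\hat w$ is realized by a meeting point $z$ with $f(z)\ge s$, then by the collar monotonicity the $\hat v$-geodesic lies in $\widehat M$ exactly on the interval between its two passages through $\Sigma_s$, so the witnessing time on that leg falls in this interval and the whole leg from $\pi(v)$ to $z$ stays in $\widehat M$; the same holds for the $\hat w$-leg; stripping the two fixed collar contributions gives $v\,\widehat R^{(i)}_t\,w$. Combined with the hypothesis $R^{(1)}=R^{(2)}$ this yields $v\,\widehat R^{(1)}_t\,w\Rightarrow\hat v\,R^{(1)}_T\,\hat w\Rightarrow\hat v\,R^{(2)}_T\,\hat w$.

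The main obstacle is to upgrade this last implication: a priori $\hat v\,R^{(2)}_T\,\hat w$ might be realized \emph{only} by a meeting point $z$ in the collar, with one or both legs having already traversed $\widehat M$ and come back out, and such a configuration does not descend to a broken geodesic of $(\widehat M,F_2)$, so the ``if'' direction cannot be invoked directly. Eliminating this possibility is where the full strength of the scattering hypothesis, and not merely the common collar, must enter. From $R^{(1)}=R^{(2)}$, Proposition~\ref{prop:R-to-bdf} gives that the boundary distance functions of $(M,F_1)$ and $(M,F_2)$ agree, and Lemma~\ref{lma:bdf-result} then furnishes a diffeomorphism $\dm\colon M\to M$ with $F_1=F_2\circ\der\dm$ on the good set $G^{(1)}$; a bootstrap from a sub-collar of $\partial M$ (where minimizing geodesics to $\partial M$ stay inside it, forcing equal boundary distance functions and hence $\dm=\id$ there) together with the flow conjugacy along the many directions of $TC$ that lie in $G^{(1)}$ shows $\dm=\id$ on all of $C$, so that $\dm(\widehat M)=\widehat M$, $\dm|_{\Sigma_s}=\id$ and $\der\dm|_{\partial T\widehat M}=\id$. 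One then excludes a spurious collar witness by reversing the offending leg at its second passage through $\Sigma_s$ and using reversibility, the equality $F_1=F_2$ on $C$, and the geodesic-flow conjugacy $\dm$ on the good directions near $\Sigma_s$ — which by strict convexity of $\Sigma_s$ include the short geodesic segments issuing from it — to relocate the meeting point into the interior of $\widehat M$ at the same total time $T$; making this precise, with a case analysis according to which legs have returned to the collar and a check that the relocated meeting occurs at the same $t$, is the technical heart of the argument. Granting it, the ``if'' direction applied to $F_2$ gives $v\,\widehat R^{(2)}_t\,w$, the symmetric argument gives the converse, and hence $\widehat R^{(1)}_t=\widehat R^{(2)}_t$, which together with the first paragraph verifies all hypotheses of Theorem~\ref{thm:main} for the pair $(\widehat M,F_1)$, $(\widehat M,F_2)$.
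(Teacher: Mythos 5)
Your first two paragraphs are sound and essentially reproduce the paper's approach: record the collar monotonicity (the paper cites lemma~\ref{lma:nontrap} for exactly this), use continuity to get $F_1=F_2$ on $\partial T\widehat M$, extend $v,w$ backward to $v',w'\in\inwb{}$ and observe that prepending the collar segments converts an $\widehat M$-witness into an $M$-witness. The observation that $F_1-F_2$ vanishes to infinite order along $\Sigma_s$ (so strict convexity of $\Sigma_s$ transfers to $F_2$) is also correct. Where you go further is in flagging the ``if'' direction: the paper asserts the biconditional ``$v\widehat R_t^{(i)}w$ if and only if $v'R_{t+a+b}^{(i)}w'$'' directly, as a consequence of the two flows agreeing on the collar, without commenting on the possibility that $v'R_T^{(i)}w'$ is realized only by a break point in the collar (one leg still on its ascending phase, or having already traversed $\widehat M$ and descended back out). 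You are right that such a collar witness does not strip down to a witness in $\widehat M$, so the paper's stated biconditional is not justified as written, and you have identified a real subtlety that the paper's very short proof does not explicitly rule out.

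However, your proposed patch does not close the gap. The ``relocation'' step --- reversing the offending leg at its second $\Sigma_s$-crossing and invoking the geodesic-flow conjugacy $\dm$ on good directions near $\Sigma_s$ --- is left unargued, and you say yourself it is ``the technical heart''. There are also concrete difficulties with the ingredients you propose to use: the good set $G^{(1)}$ in lemma~\ref{lma:bdf-result} is defined relative to $\partial M$, not to $\Sigma_s$, so ``short geodesic segments issuing from $\Sigma_s$ into $\widehat M$'' are not a priori in it; the conjugacy $\dm$ is furnished only on $G^{(1)}$, not on all of $T\widehat M$, so it need not control an offending leg in the part of $\widehat M$ that is far from the boundary; and invoking proposition~\ref{prop:R-to-bdf} and lemma~\ref{lma:bdf-result} to ``bootstrap $\dm=\id$ on $C$'' is redundant, since $F_1=F_2$ on the collar is already a hypothesis of the lemma and forces the geodesic flows there to coincide without any appeal to boundary distance data. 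So the correct diagnosis is: you have located a step the paper states without argument, but the argument you sketch in its place is neither complete nor clearly convergent, and would need to be replaced with a precise proof (or disproof) that the total-time relation on $\partial M$ cannot hold for one metric solely through collar breaks while holding for the other through an $\widehat M$ break.
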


Finally we recall the second main result of \cite{de2019inverse}:
\begin{lemma}[{\cite[theorem 1.5]{de2019inverse}}]
\label{lma:bdf-elastic}
Let $(M_1,F_1)$ and $(M_2,F_2)$ be two compact Finsler manifolds with boundary.
Suppose there is a diffeomorphism $\diffeo\colon\partial M_1\to\partial M_2$ so that \eqref{eq:BDF-agree} holds.
If Finsler functions $F_1$ and $F_2$ are fiberwise real analytic, then there
exists a Finslerian isometry $\Psi\colon (M_1, F_1) \to (M_2,F_2)$ so that $\Psi| \p M_1 = \phi$.
\end{lemma}

\color{black}

\subsection{Proofs of the theorem{\color{red}s}}
%\color{black}
% \ref{thm:main}
Now we are ready to present the detailed proofs of theorems~\ref{thm:main} and ~\ref{thm:elastic} respectively.
We use the notations introduced in the previous subsection.

\begin{proof}[Proof of theorem~\ref{thm:main}]
It follows from proposition~\ref{prop:R-to-bdf} that the boundary distance functions of the two manifolds agree up to identifying the boundaries with~$\diffeo$.
By lemma~\ref{lma:bdf-result} there is a diffeomorphism
%
%\footnote{Vaihdoin; ei enää $\ext{\diffeo}$ vaan $\dm$. Näin on tilaa käyttää eri merkintää jos tahtoo, muttei ole pakko. Kaikki makroina!} 
%
$\dm\colon M_1\to M_2$, and we use this diffeomorphism to identify the two manifolds as $M=M_1=M_2$.
This manifold inherits the foliation from $(M_1,F_1)$.
%We will show in remark~\ref{rmk:differentials_agree} that this diffeomorphism $\dm$ is compatible with $\diffeo$ and $\Diffeo$ in the required way.
%\footnote{Point to the later place where boundary compatibility of the extension is shown.}

This smooth manifold $M$ has two Finsler functions $F_1$ and $F_2$, and the goal is to show that they are equal.
Lemma~\ref{lma:bdf-result} shows that $F_1=F_2$ in the good set $G:=G^{(1)}\subset TM\setminus0$.
By reversibility we conclude that $F_1=F_2$ in $\sigma(G)$.
Lemma~\ref{lma:convex-half-sphere} guarantees that $\sigma(G)$ contains the punctured tangent bundle of some neighborhood of $\partial M$.
Therefore there is $\eps>0$ so that $F_1=F_2$ in tangent bundle $T(f^{-1}([0,\eps]))$.

%Consider a forward-maximal geodesic $\gamma$ with respect to $F_1$ with initial data $(x,v)$ in the good set $G\subset TM\setminus 0$.
%Because the two Finsler functions agree on a neighborhood of the lift of $\gamma$ to $TM$, the curve $\gamma$ is also a geodesic with respect to $F_2$.
%%\footnote{Tämä kappale ei näytä kuuluvan tähän. Otetaanko pois? En näe, mitä se tekee.}

Now define
\begin{equation}
I
=
\{
h\in[0,S];
F_1(x,v)=F_2(x,v)
\text{ whenever $f(x)\leq h$ and $v\in T_xM\setminus0$}
\}.
\end{equation}
We showed that $[0,\eps)\subset I$, and it is clear that $I$ is an interval.
By continuity of the two Finsler functions, it is a closed interval and therefore $I=[0,s]$ for some $s\in(0,S]$.
If $s=S$, the two Finsler functions coincide on the whole tangent bundle and the proof is complete.

Suppose then that $s\in(0,S)$.
We are now in the setting of lemma~\ref{lma:propagation}.
The two metrics coincide in the strip $f^{-1}([0,s])$, so the data may be propagated through it.
We have now two metrics on the shrunk manifold $\widehat{M}=f^{-1}([s,S])$ and their broken scattering relations coincide by lemma~\ref{lma:propagation}.

Let us denote the good set of $(\widehat M,F_i)$ by $\widehat G^{(i)}$.
Repeating the argument obtained above, we find that there is a diffeomorphism
%\footnote{Huomaa, että tämäkin on makrona nimetty, jotta nimiä voi muuttaa halvalla!}
$\ddiffeo\colon\widehat M\to\widehat M$ so that $F_1=\ddiffeo^*F_2$ on $\widehat G^{(1)}$, which again has full fibers when the base point is in some neighborhood of the boundary.
It is straightforward to check that if $(x,v)\in G$ and $x\in\widehat M$, then $(x,v)\in\widehat G^{(1)}$.
We also note that for any $x \in \widehat M$ the intersection $T_x \widehat M \cap G^{int}$ is non-empty, since any small perturbation of an initial direction of the geodesic connecting $x$ to a closest boundary point in $\p M$, is contained in $G$. Take $(x,v)\in G^{int}$ and let $\gamma$ be the forward-maximal geodesic with respect to $F_1$ with such initial data. Due to \cite[Section 3.3]{de2019inverse} it follows that the lift of $\gamma$ is contained in $G^{int}$. Since the geodesic coefficients $G^i(w), \: i \in \{1,\ldots,n\}, \: w \in TM$ (see for instance \cite[formula (5.7)]{shen2001lectures} for the definition) of $F_1$ and $F_2$ agree on $G^{int}$ it follows that
%We know{Tätä pitää selittää. Onko $\gamma$ kokonaan hyvän joukon sisuksessa tai ainakin sisuksen sulkeumassa? Tällöin geodeesiyhtälö on sama molemmissa geometrioissa.} that 
$\gamma$ is also a geodesic with respect to $F_2$.

But as $F_1$ and $\ddiffeo^*F_2$ agree in a neighborhood of the lift of $\gamma$, we have that the $F_2$-geodesic $\gamma$ starts at $\ddiffeo(x)$.
Thus $\ddiffeo(x)=x$ and one could choose any starting point $x\in\widehat M$, and so $\ddiffeo=\id$.
Therefore $F_1(x,v)=F_2(x,v)$ for $x\in\widehat M$ close enough to $\partial\widehat M$ and all $v\in T_xM$.
This means that the two metrics coincide in $f^{-1}([0,s+\eps])$ for some $\eps>0$, contradicting the maximality of $s$.

Finally we verify that
$
\der \dm$ and $\Diffeo
$
coincide on $\p TM_1$.
%where $\dm\colon M_1 \to M_2$ is the diffeomorhism in lemma \ref{lma:bdf-result}.
%It was shown in the proof of \cite[proposition 3.1.]{de2019inverse} that the map $
%\mathcal R_i:= x\mapsto d_i(x,\cdot)\in (C(\p M_i), \|\cdot\|_\infty), \: i\in \{1,2\}
%$
%is a topological embedding. Due to equation \eqref{eq:BDF-agree} it follows from the %proofs of theorems \cite[theorems 3.2. \& 3.12.]{de2019inverse} that 
%\[
%\dm\colon M_1 \to M_2, \quad \dm(x)=\mathcal R_{2}^{-1}(\mathcal{R}_1(x) \circ %\diffeo^{-1}).
%\]
First we note that definition \ref{def:compatible-diffeomorphism}, proposition \ref{prop:R-to-bdf} and lemma \ref{lma:bdf-result} imply
\begin{equation}
\label{eq:boundary_differential}
\der \dm|_{T \p M_1}=\der \diffeo= \Diffeo|_{T \p M_1}.
\end{equation}
%Due to condition \eqref{cond_1} of theorem \ref{thm:main}
%\begin{equation}
%\label{eq:Fs_agree}
%F_1(v)=(F_2 \circ \Diffeo)(v), \quad v \in \p TM_1.
%\end{equation}
We recall that  $\Diffeo$ and $\der \dm$ are linear on the fibers, and therefore to conclude the proof it suffices to show that $\Diffeo$ preserves the inward pointing  unit normal vector field to the boundary. By taking the directional differential $\der_v$ of condition \eqref{cond_1} of theorem \ref{thm:main}, and using equation \eqref{eq:boundary_differential}, we get
$
%\[
%=\langle \nu, w \rangle_\nu
%=\langle \der_v F_1(\nu),w\rangle 
%=\langle \der_vF_2(\der \dm v)|_{v=\nu},\der \diffeo w\rangle=
\langle\der_v(F_2)(\Diffeo \nu),\der \diffeo w\rangle=0, \: w \in T\p M_1.
%\]
$

Since $\diffeo\colon \p M_1 \to \p M_2$ is a diffemorphism we have shown that the Legendre transform of $\Diffeo \nu$ is co-normal to $T\p M_2$. This implies that $\Diffeo \nu $ is normal to the boundary $\p M_2$. Due to condition \eqref{cond_2} of theorem \ref{thm:main} it holds that $\Diffeo$ maps ${\inwb{}}_1$ onto ${\inwb{}}_2$. Therefore we have verified
$
\Diffeo \nu
$
is the inward pointing unit normal to $\p M_2$. The proof is complete.
\end{proof}

\begin{proof}[Proof of theorem~\ref{thm:elastic}]
Theorem~\ref{thm:elastic} is a direct consequence of proposition \ref{prop:R-to-bdf} and lemma \ref{lma:bdf-elastic}.
\end{proof}

\color{black}

%\footnote{The two lemma proof sections below commute; neither depends on the other.}

\section{Short and long geodesics}
\label{sec:convex-half-sphere}

In this section we analyze short geodesics near the boundary to prove lemma~\ref{lma:convex-half-sphere}.
To do so, we first make an observation concerning the lengths of geodesics of arbitrary --- even infinite --- length.

For any Finsler manifold $(M,F)$ with boundary we define the exit time function $\exit\colon SM\to[0,\infty]$ so that $\exit(v)$ is the first time $t>0$ for which $\pi(\phi_t(v))\in\partial M$.
If there is no such time, the value is taken to be infinity.
If $v$ is based at a boundary point and points tangentially or outward, we set $\exit(v)=0$. 
\begin{lemma}
\label{lma:exit-continuous}
On any complete Finsler manifold $M$ with strictly convex boundary the exit time function $\exit\colon SM\to[0,\infty]$ is continuous.

Without assuming completeness, continuity holds where $\exit<\infty$, including a neighborhood of $\overline{\outwb}$.
\end{lemma}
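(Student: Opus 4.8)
\textbf{Proof strategy for Lemma~\ref{lma:exit-continuous}.}
The plan is to prove continuity of $\exit$ in two regimes separated by the behavior of the geodesic at the exit point, and to exploit strict convexity exactly where transversality fails. First I would recall the standard fact that the geodesic flow $\phi_t$ depends smoothly on $(t,v)$ wherever it is defined, so the only possible failure of continuity of $\exit$ comes from: (a) geodesics that graze the boundary tangentially, and (b) the escape of mass to time $\infty$ in the incomplete case. Fix $v_0\in SM$ with $\exit(v_0)=T<\infty$ and let $p=\pi(\phi_T(v_0))\in\p M$, $w=\phi_T(v_0)$.

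\emph{Case 1: transversal exit, i.e. $\ip{w}{\nu}_\nu<0$ (outward-pointing at $p$).} Here I would use the implicit function theorem applied to the smooth function $(t,v)\mapsto f(\pi(\phi_t(v)))$, where $f$ is a boundary defining function with $\der f\neq0$ on $\p M$: since $\p_t f(\pi(\phi_t(v)))|_{t=T,v=v_0}=\der f(w)\neq0$ by transversality, there is a smooth solution $t=\exit(v)$ near $v_0$, giving continuity (indeed smoothness) there. Because $\overline{\outwb}$ consists of vectors based on $\p M$ pointing outward or tangentially, and for the strictly outward ones $\exit=0$ with the IFT argument giving continuity up to the boundary, this also covers a neighborhood of $\overline{\outwb}$ after a short argument handling the tangential vectors on $\p M$ itself (a vector tangent to $\p M$ at a boundary point has $\exit=0$, and strict convexity forces nearby vectors to exit quickly).

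\emph{Case 2: tangential exit, i.e. $\ip{w}{\nu}_\nu=0$.} This is the main obstacle, and strict convexity is what saves us. Strict convexity of $\p M$ means $\p_t^2 f(\pi(\phi_t(v)))<0$ whenever the first derivative vanishes, so along the geodesic through $w$ the function $t\mapsto f(\pi(\phi_t(\cdot)))$ has a strict local maximum at the grazing time: the geodesic stays in the interior on both sides of $t=T$ (for a short time) and only touches $\p M$ at $t=T$. The key point is that $\exit(v_0)=T$ really means the geodesic does not reenter $M$ — so actually in this tangential case the geodesic must have already been outside $M$ just before $T$, i.e. $v_0$ lies on an ``exit at a tangency'' configuration; I would argue that by strict convexity such grazing is an isolated contact, and a Taylor expansion of $f\circ\pi\circ\phi_t(v)$ to second order in $(t-T)$ and first order in $v-v_0$ shows that for $v$ near $v_0$ the first root $t(v)$ of $f(\pi(\phi_t(v)))=0$ satisfies $t(v)\to T$; here one uses that the quadratic $a(t-T)^2+b(v)(t-T)+c(v)$ with $a<0$, $b(v_0)=0$, $c(v_0)=0$ has its relevant root depending continuously on $(b,c)$ near $(0,0)$. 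This handles upper semicontinuity; lower semicontinuity follows because the geodesic from $v_0$ stays in $\sisus M$ on $[0,T)$ (a compact set bounded away from $\p M$ except near the endpoint), and by continuity of $\phi$ nearby geodesics also stay interior on $[0,T-\delta]$, so $\exit(v)\geq T-\delta$ eventually.

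\emph{The point at infinity and the role of completeness.} Finally, for continuity at a $v_0$ with $\exit(v_0)=\infty$ one needs completeness: I would argue that if $\exit(v_0)=\infty$ but $\exit(v_n)\to L<\infty$ along some sequence $v_n\to v_0$, then by continuity of $\phi$ on the (complete) manifold the geodesic $\phi_t(v_0)$ would reach $\pi(\phi_L(v_0))\in\p M$, contradicting $\exit(v_0)=\infty$ — unless the geodesic exits tangentially at time $L$, which Case 2's analysis again excludes as producing an actual exit. Without completeness the flow may simply cease to be defined, so this last step fails, but the two preceding cases still give continuity on the open set $\{\exit<\infty\}$, and the neighborhood of $\overline{\outwb}$ is contained in that set, yielding the unconditional second assertion. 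The main technical care goes into Case 2, writing out the second-order expansion carefully and checking that strict convexity gives the sign $a<0$ uniformly near $w$.
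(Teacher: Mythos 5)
Your three-way split (transversal exit, tangential exit, escape to infinity) contains a genuine conceptual confusion in Case 2. With the sign convention of definition~\ref{def:foliation} ($f>0$ in $\sisus(M)$, $f=0$ on $\p M$), strict convexity at a grazing time forces $h(t):=f(\pi(\phi_t(v_0)))$ to have a \emph{strict local maximum} equal to zero there: so the geodesic lies strictly \emph{outside} $M$ on both sides of the tangency. Your text asserts the opposite (``the geodesic stays in the interior on both sides of $t=T$''), then self-corrects to ``the geodesic must have already been outside $M$ just before $T$'' --- but that already contradicts $\exit(v_0)=T$ being the \emph{first} time $\gamma_{v_0}$ hits $\p M$, since the geodesic is in $\sisus(M)$ on $(0,T)$ by definition of $\exit$. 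In short: a tangential exit at a strictly positive exit time is \emph{impossible} when the boundary is strictly convex. This is exactly the observation the paper uses; for $\exit(v)\in(0,\infty)$ the exit is automatically transverse, your Case~1 IFT argument applies directly, and the only tangential situation is $\exit=0$, i.e. vectors in $\overline{\outwb}$. There the paper simply cites a standard short-geodesic estimate near a strictly convex boundary. Your Taylor-expansion and quadratic-root machinery therefore analyses an empty case, and the ``main technical care'' you reserve for it is never needed.

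Your Case 3 (escape to infinity) has the right core idea but one spurious clause: ``unless the geodesic exits tangentially at time $L$'' makes no sense as a caveat, because if $\pi(\phi_L(v_0))\in\p M$ for a finite $L$ then $\exit(v_0)\leq L<\infty$ regardless of transversality, and that is already the contradiction. The paper's version is also tighter as a compactness argument: use completeness to confine the competing geodesics $\gamma_{v_k}$ to a fixed compact metric ball, extract a subsequence with $\exit(v_k)\to T'$ and with exit points converging in $\p M$, and deduce $\pi(\phi_{T'}(v_0))\in\p M$ by continuity of the flow. You are right that completeness is needed only here, and right about the unconditional statement on $\{\exit<\infty\}$ including a neighborhood of $\overline{\outwb}$; but cleaning up the two points above would bring your proof to the form the paper actually uses.
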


\begin{proof}
We will check continuity separately at different kinds of points $v\in SM$, depending on the value $\exit(v)$.

If $\exit(v)=0$, then $v$ is based on the boundary and points tangentially or outwards.
(That is, $v\in\overline{\outwb}$.)
By the assumption of strict convexity, any geodesic starting near $v$ is short due to \cite[Section 8.1]{SUbook}, establishing continuity of $\exit$ at $v$.

If $\exit(v)\in(0,\infty)$, then the geodesic $\gamma_v$ starting at $v$ reaches $\partial M$ in finite time.
By strict convexity $\dot\gamma_v(\exit(v))$ is transverse to $\partial M$, and it follows from the implicit function theorem that $\exit$ is in fact smooth in a neighborhood of $v$.

If $\exit(v)=\infty$, then the geodesic $\gamma_v$ is trapped.
Continuity at $v$ can only fail if there is a sequence of vectors $v_k\in SM$ so that $v_k\to v$ as $k\to\infty$ and $\exit(v_k)\leq T$ for some $T\in(0,\infty)$.
There is some $r>0$ so that for all $k\in\N$ we have $d(\pi(v_k),\pi(v))<r$.
All the forward-maximal geodesics $\gamma_{v_k}$ are thus contained in the metric closed ball $K=\bar B(\pi(v),r+T)$.
By completeness of $M$ the set $K$ is compact. Up to extracting a subsequence, the endpoints $\gamma_{v_k}(\exit(v_k))\in\partial M\cap K$ converge to a point $z\in\partial M$ and $\exit(v_k)$ converges to $T'\leq T$.
By continuity of the geodesic flow, we have that $\phi_{T'}(v)=\lim_{k\to\infty}\phi_{\exit(v_k)}(v_k)\in S_zM$.
This means that $\gamma_v$ meets the boundary at time $T'<\infty$, which is a contradiction.

We conclude that $\exit$ is continuous at all points of $SM$.
Completeness was only needed to prove continuity where $\exit=\infty$, and that cannot happen near $\overline{\outwb}$ when $\partial M$ is strictly convex.
\end{proof}

\begin{proof}[Proof of lemma~\ref{lma:convex-half-sphere}]
If the manifold is not compact, we can restrict the analysis to a bounded neighborhood of a given boundary point and do everything on a compact submanifold.
On a compact Finsler manifold there is some $\eps>0$ so that any geodesic with length less than $\eps$ is minimizing.

Consider any point $p\in\partial M$, and let $\nu_p$ be the inward unit normal at $p$ and $\gamma_p$ the geodesic starting in the direction of $\nu_p$.
For any $t>0$ small enough so that the geodesic $\gamma_p$ up to time $t$ exists we define the map $P^p_t\colon S_pM\to S_{\gamma_p(t)}M$ by parallel transport.

Fix some $T>0$ small enough.
We define a function $Q\colon[0,T)\times\partial SM\to SM$ so that $Q(t,v)=P^{\pi(v)}_t(v)$.
This function is continuous, so by lemma~\ref{lma:exit-continuous} the composed function $\exit\circ Q\colon[0,T)\times\partial SM\to[0,\infty]$ is also continuous.
(Continuity will only be needed near directions where $\exit=0$.) 
This composed function vanishes on $\{0\}\times\overline{\outwb}$, so there is an open neighborhood $V\subset[0,T)\times SM$ of $\{0\}\times\outwb$ so that $\exit\circ Q|_V<\eps$.
Since all geodesics shorter than $\eps$ are minimizing, we have $Q(V)\subset G$.

Any boundary point $p\in\partial M$ has a neighborhood $W\subset\partial M$ and two numbers $a,h>0$ so that $[0,h)\times V_p\subset V$ with
\begin{equation}
V_p
\coloneqq
\{v\in SM;\pi(v)\in W\text{ and }\ip{v}{\nu}_\nu<a\}
.
\end{equation}
We note that $V_p$ contains larger portion of $S_pM$ than the Southern hemisphere. We have that $Q([0,h)\times V_p)\subset G$ and
\begin{equation}
Q([0,h)\times\sigma(V_p))
=
\sigma(Q([0,h)\times V_p))
\subset
\sigma(G).
\end{equation}
But we have
$
%\begin{equation}
\sigma(V_p)
=
\{v\in SM;\pi(v)\in W\},
%\end{equation}
$
so that the set
$
%\begin{equation}
U'
=
\{
\gamma_p(t);t\in[0,h)\text{ and }p\in W
\}
%\end{equation}
$
is a subset of the set $U$ defined in the claim of the lemma.
Now $U'$ is a neighborhood of the arbitrary boundary point $p$, so we have established that $U$ does indeed contain a neighborhood of the boundary.
\end{proof}

\section{Propagation of data through a layer}
\label{sec:propagation}

In order to prove lemma~\ref{lma:propagation} to propagate data from $\partial M$ to $\partial\widehat M$, we first observe that a foliated manifold enjoys a certain non-trapping property. 
%\footnote{There's a paper by Paternain--Salo--Uhlmann--Zhou discussing foliation conditions of Riemannian manifolds. Should be cited in this paper. We should also mention uses of the foliation condition at some point.}

\begin{lemma}
\label{lma:nontrap}
Let $(M,F)$ be a compact Finsler manifold which is foliated by a smooth function $f\colon M\to\R$ in the sense of definition~\ref{def:foliation}.
If a maximal geodesic $\gamma$ satisfies $\partial_t(f(\gamma(t))<0$ at $t=0$, then $\gamma$ reaches $\partial M$ in finite time.
\end{lemma}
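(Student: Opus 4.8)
The idea is to track the behavior of the scalar function $t\mapsto f(\gamma(t))$ along the maximal geodesic and use strict convexity of the foliation to show it cannot stay inside the interval $[0,S]$ forever. Set $\psi(t)=f(\gamma(t))$. The hypothesis says $\psi(0)\in[0,S]$ and $\psi'(0)<0$. First I would observe that as long as $\gamma$ stays in $\sisus(M)$, at any time $t_0$ with $\psi'(t_0)=0$ the velocity $\dot\gamma(t_0)$ is tangent to the level set $\Sigma_{\psi(t_0)}$ (using $\der f\neq 0$ there), so strict convexity in Definition \ref{def:foliation}(ii) gives $\psi''(t_0)<0$. Hence every critical point of $\psi$ in the interior is a strict local maximum; consequently $\psi$ can have at most one interior critical point, and once $\psi'<0$ it stays negative and $\psi$ is strictly decreasing on the whole remaining interval of existence. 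In particular $\psi(t)<\psi(0)\le S$ for all $t>0$ in the domain, so $\gamma$ never reaches $f^{-1}(S)$, and $\psi$ is bounded above.

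Next I would argue that the geodesic exists for all time or exits. Since $(M,F)$ is compact, the forward-maximal geodesic $\gamma$ is defined on a maximal interval $[0,T_{\max})$, and if $T_{\max}<\infty$ then $\gamma$ must leave every compact subset of the open manifold, which (as $M$ is compact) forces $\gamma(t)\to\partial M$, i.e.\ it reaches the boundary. So assume for contradiction $T_{\max}=\infty$ and $\gamma$ stays in $\sisus(M)$ for all $t\ge 0$. Then $\psi\colon[0,\infty)\to(0,S)$ is strictly decreasing by the previous paragraph, so $\psi(t)\to \ell\in[0,S)$ as $t\to\infty$. Unit-speed parametrization plus compactness of $SM$ lets me take $t_k\to\infty$ with $\dot\gamma(t_k)\to w\in SM$; the base point lies on $\Sigma_\ell$ (by continuity of $f$) and, since $\psi'(t_k)=\der f(\dot\gamma(t_k))\to 0$ (a decreasing $C^1$ function on $[0,\infty)$ with finite limit has derivative along a suitable sequence tending to $0$, and here in fact $\psi'\to0$ because otherwise $\psi$ would be unbounded below), $w$ is tangent to $\Sigma_\ell$.

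Now I run the convexity estimate uniformly: by strict convexity and compactness there is $\delta>0$ and a neighborhood of the tangent sphere bundle of $\Sigma_\ell$ on which any geodesic $\eta$ with $\der f(\dot\eta(0))$ small satisfies $\p_t^2 f(\eta(t))\le -\delta$ for $t$ in a fixed interval $[0,\tau]$. Applying this to $\eta(\cdot)=\gamma(t_k+\cdot)$ for large $k$, Taylor's theorem gives $\psi(t_k+\tau)\le \psi(t_k)+\tau\psi'(t_k)-\tfrac{\delta}{2}\tau^2$, and since $\psi(t_k),\psi(t_k+\tau)\to\ell$ and $\psi'(t_k)\to0$ this yields $0\le -\tfrac{\delta}{2}\tau^2<0$, a contradiction. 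Hence $\gamma$ cannot remain in the interior for all time, so it reaches $\partial M$ in finite time. The main obstacle is making the convexity bound uniform — turning the pointwise inequality $\p_t^2 f(\gamma(t))|_{t=0}<0$ of Definition \ref{def:foliation} into a quantitative estimate $\p_t^2 f(\eta(t))\le-\delta$ holding on a fixed time interval for all geodesics with nearly-tangent initial velocity near a given level set — which is where compactness of $M$ and of the relevant sphere bundles, together with smooth dependence of geodesics on initial data, must be invoked carefully.
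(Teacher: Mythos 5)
Your argument is correct and follows essentially the same route as the paper's proof: rule out interior critical points of $t\mapsto f(\gamma(t))$ via strict convexity, then, assuming the geodesic were trapped, pass to a sequence of times at which the velocity accumulates at a vector tangent to the limiting level set, and use a locally uniform version of the convexity inequality to force $f\circ\gamma$ to drop by a fixed amount infinitely often, contradicting convergence. One small caution: the parenthetical claim that $\psi'\to 0$ on the whole ray ``because otherwise $\psi$ would be unbounded below'' is not self-evident without invoking boundedness of $\psi''$ (which does hold here by compactness); the paper sidesteps this by only extracting a sequence $t_k\to\infty$ with $\psi'(t_k)\to 0$, which is in fact all your final estimate uses.
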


\begin{proof}
Consider the function $h(t)=f(\gamma(t))$, defined over some maximal interval $[0,T]$ or $[0,\infty)$.
The goal is to show that $h$ obtains the value zero in finite time.
Given that $h'(0)<0$ and $h$ is smooth, this can only fail if one of the following happen:
\begin{enumerate}
\item[(i)]
\label{case1}
The derivative $h'(t)$ vanishes for some $t$.
\item[(ii)]
\label{case2}
The function has a limit: $h$ is strictly decreasing and $\lim_{t\to\infty} h(t)=H\in[0,\infty)$.
\end{enumerate}

Let us first exclude case~(i).
%\ref{case1}.
If $h'$ vanishes somewhere, there is a smallest $t>0$ for which $h'(t)=0$.
We have that $h'<0$ on $[0,t)$.
The geodesic $\gamma$ is tangent to the level set $f^{-1}(h(t))$, so by definition~\ref{def:foliation} we have $h''(t)<0$.
This implies that for some $\eps>0$ we have $h'(t-\eps)>0$, which is a contradiction.

Let us then move to case~(ii).
%\ref{case2}.
The curve $\gamma$ now approaches the surface $\Sigma\coloneqq f^{-1}(H)$.
By monotonicity there is an increasing sequence of times $t_k\to\infty$ so that $h'(t_k)\to 0$.
Upon extracting a subsequence, there exists a point $x\in \Sigma$ so that $\gamma(t_k)\to x$ and $\dot\gamma(t_k)$ converges to some $v\in S_xM$.
As $h'(t_k)\to 0$, it follows that $v$ is tangent to $\Sigma$.

Let $\tilde\gamma$ be the geodesic starting with initial conditions $(x,v)$.
We denote $\tilde h=f\circ\tilde\gamma$.
Since $v$ is tangent to $\Sigma$, we have $\tilde h'(0)=0$, so definition~\ref{def:foliation} implies $\tilde h''(0)<0$.
%
%Let $w=\dot{\gamma}(0)$. Then smoothness of the geodesic flow imply that for any $s \in \R$ holds
%\[
%\lim_{t_k\to \infty }(\gamma(t_k+s), \dot \gamma(t_k+s))=\lim_{k\to \infty} \phi_{t_k+s}(w)=\lim_{k\to \infty}\phi_s(\phi_{t_k}(w))=(\widetilde \gamma(s),\dot{\widetilde \gamma} (s)).
%\]
%Thus using geodesic equation $\ddot{\gamma}^i+2G^i(\dot \gamma)=0$ we obtain
%\begin{equation}
%\begin{split}
%\lim_{k\to\infty}
%h''(t_k+s)
%=&
%\lim_{k\to\infty} \left(\p_{x_i}\p_{x_j}f(\gamma(t_k+s))\dot{\gamma}^i(s+t_k)\dot{\gamma}^j(s+t_k)+\p_{x_i}f(\gamma(s+t_k))\ddot\gamma^i(t_k+s)\right)
%\\
%=&
%\lim_{k\to\infty} \left(\p_{x_i}\p_{x_j}f(\widetilde\gamma(s))\dot{\widetilde\gamma}^i(s+t_k)\dot{\widetilde \gamma}^j(s)+\p_{x_i}f(\widetilde %\gamma(s))\ddot{\widetilde\gamma}^i(s)\right)
%\\
%=& 
%\lim
%\widetilde h''(s)
%\end{split}
%\end{equation}
%
By smoothness of $f$ and the time additive property of geodesic flow we have
\begin{equation}
\lim_{k\to\infty}
h''(t_k+s)
%=
%\lim_{k\to\infty}
%\partial_t^2f(\gamma(t_k+s))
%=
%\partial_t^2f(\tilde\gamma(s))
= 
%\footnote{details commented before the formula}
\tilde h''(s) 
\end{equation}
for any $s\in\R$.

Because $\tilde h''(0)<0$, there are $\eps>0$ and $\delta>0$ so that $h''(t_k+s)\leq-\eps$ for all $s\in[-\delta,\delta]$ when $k$ is large enough.
This together with $h'\leq0$ (which is due to monotonicity) and the fundamental theorem of calculus gives
\begin{equation}
h(t_k+\delta)
=
h(t_k-\delta)
+
2\delta h'(t_k-\delta)
+
\int_{-\delta}^\delta
\int_{-\delta}^s
h''(t_k+r)
\der r
\der s
\leq
h(t_k-\delta)
-2\eps\delta^2
\end{equation}
for large enough $k$.
As $-2\eps\delta^2$ is independent of $k$ and $h$ is monotonous, it follows that $h$ cannot converge.
This is a contradiction.
%\footnote{This argument was rewritten to be more precise. The earlier one was quite hand-wavy.}

Therefore there is indeed some finite $t$ so that $h(t)=0$.
\end{proof}

%In fact, for lemma~\ref{lma:nontrap} it is enough that the sets $f^{-1}([0,s])$ are compact, and it is not important that the leaf $f=0$ is the boundary. The same conclusion still holds.

\begin{proof}[Proof of lemma~\ref{lma:propagation}]
Equality of the Finsler functions on $\partial\widehat M$ to all directions on the bundle --- which amounts to $\diffeo$ and $\Diffeo$ being identities --- follows from the assumed equality of the Finsler functions on the strip $f^{-1}([0,s])$ and continuity.

For any $v\in SM$, we denote by $\gamma_v$ the unique geodesic with $\dot\gamma_v(0)=v$.
Take any two $v,w\in\inwbc{\widehat M}$ and $t>0$.

By lemma~\ref{lma:nontrap} there is $a>0$ so that the geodesic segment $\gamma_v|_{[-a,0]} \subset f^{-1}([0,s])$ connects a point on $\partial M$ to the point $\pi(v)\in\partial\widehat M$.
Because the two Finsler metrics agree in $f^{-1}([0,s])$, this same curve is a geodesic in both geometries.
By strict convexity this geodesic meets $\partial M$ transversely, so that $v'\coloneqq\dot\gamma_v(-a)$ belongs to $\inwb{}$.
Similarly, there are $b>0$ and $w'\in\inwb{}$ corresponding to $w$.

Because the two geodesic flows agree from $v'$ to $v$ and from $w'$ to $w$, we have that $v\widehat R_t^{(i)}w$ if and only if $v'R_{t+a+b}^{(i)}w'$ for both $i \in \{1,2\}$.
The broken scattering relations $R^{(i)}_{t+a+b}$ agree on $\partial M$, and so the broken scattering relations $\widehat R_t^{(i)}$ agree on $\partial\widehat M$.
%
%\footnote{I changed here $R^i_{t-a-b}$ to $R^i_{t+a+b}$, since the total travel time cannot $t-a-b<0$. (TS)}
\end{proof}

\section{From broken scattering relation to boundary distance functions}
\label{sec:From-BSR-to-BDF}

In this section we will prove proposition~\ref{prop:R-to-bdf}. We start with considering only one Finsler manifold $(M,F)$ with strictly convex boundary whose broken scattering relation is known.

\subsection{Critical distance functions}
\label{ssec:focus_fam}

We begin with studying classical critical distance functions. The first one is the cut distance function $\cut\colon S M\to (0,\infty]$ given by
%\footnote{Muotoilin tämän uudestaan. Nyt on huomioitu myös monistolta putoaminen.}
\begin{equation}
\cut(x,v) \coloneqq
\sup
\{t>0; \gamma_{x,v}(t) \text{ exists and }  d(x,\gamma_{x,v}(t))=t\}.
\end{equation}
Since the boundary of the $M$ is strictly convex it holds that any distance minimizing curve is a geodesic. Therefore function $\cut$ is well defined and moreover it is continuous. 

Next we formulate two auxilliary lemmas related to $\cut$ function.

\begin{lemma}
\label{Le:self_int}
For any $(x,v) \in \inwb{}$  and 
$
s_2 > s_1 \geq 0,
$
which satisfy
\begin{equation}
\label{eq:self_intersection_of_gamma}
\gamma_{x,v}(s_1)=\gamma_{x,v}(s_2),
\quad \hbox{we have} \quad  s_1+s_2 > 2\cut(x,v).
\end{equation}
\end{lemma}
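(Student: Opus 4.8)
The plan is to argue by contradiction, exploiting the fact that a self-intersecting geodesic can be ``short-cut'' to produce a contradiction with the definition of $\cut$. Suppose $\gamma_{x,v}(s_1)=\gamma_{x,v}(s_2)=:p$ with $s_2>s_1\geq 0$ but $s_1+s_2\leq 2\cut(x,v)$. Set $c\coloneqq\cut(x,v)$. The key point is that the geodesic $\gamma_{x,v}$ restricted to $[0,c]$ is distance-minimizing: by definition of $\cut$ and continuity of the distance function, $d(x,\gamma_{x,v}(t))=t$ for all $t\in[0,c]$, so in particular $\gamma_{x,v}|_{[0,c]}$ is a shortest curve from $x$ to each of its points.

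First I would treat the easy reduction. If $s_2\leq c$, then both $\gamma_{x,v}(s_1)$ and $\gamma_{x,v}(s_2)$ lie on the minimizing segment $\gamma_{x,v}|_{[0,c]}$, which is an injective curve unless $s_1=s_2$ — this already contradicts $s_1<s_2$ together with $\gamma_{x,v}(s_1)=\gamma_{x,v}(s_2)$. (A shortest geodesic cannot pass through the same point twice, since removing the loop would give a strictly shorter curve between the endpoints.) So we may assume $s_2>c$. The hypothesis $s_1+s_2\leq 2c$ then forces $s_1<c<s_2$, and moreover $s_1\leq 2c-s_2<c$, with $2c-s_2\geq s_1\geq 0$.

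Now consider the following curve from $x$ to the point $q\coloneqq\gamma_{x,v}(2c-s_2)$, which lies on the minimizing segment since $0\leq 2c-s_2<c$: take $\gamma_{x,v}$ from $x$ to $p=\gamma_{x,v}(s_2)$ — but $p$ also equals $\gamma_{x,v}(s_1)$, and from $s_1$ we can continue along $\gamma_{x,v}$ backward, i.e. reverse the segment $\gamma_{x,v}|_{[2c-s_2,\,s_1]}$. Here is where reversibility of $F$ enters: the reversed segment is again a geodesic of the same length $s_1-(2c-s_2)$. Concatenating, we obtain a (broken) path from $x$ to $q$ of length $s_2+\bigl(s_1-(2c-s_2)\bigr)=s_1+s_2-2c+s_2$. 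Hmm — let me instead build the cleaner comparison: the concatenation of $\gamma_{x,v}|_{[0,s_1]}$ (length $s_1$) with the reversal of $\gamma_{x,v}|_{[s_2,\,?]}$ does not obviously help; the right object is to go from $x$ along $\gamma_{x,v}$ to $\gamma_{x,v}(s_2)=\gamma_{x,v}(s_1)$ and observe that this exhibits $d(x,p)\leq s_2$, while simultaneously $d(x,p)=d(x,\gamma_{x,v}(s_1))=s_1<c<s_2$ because $s_1<c$ lies in the minimizing range. So the curve $\gamma_{x,v}|_{[0,s_2]}$ from $x$ to $p$ has length $s_2$, strictly greater than $d(x,p)=s_1$; taking the shortest curve from $x$ to $p$ (length $s_1$) and then continuing along $\gamma_{x,v}|_{[s_2,\,2c-s_1]}$ — reparametrized, using reversibility if the orientation is unfavorable — yields a curve from $x$ to $\gamma_{x,v}(2c-s_1)$ of length $s_1+(2c-s_1-s_2)=2c-s_2<c$; but $\gamma_{x,v}|_{[0,2c-s_1]}$ has length $2c-s_1$, and since $2c-s_1>c$ this contradicts nothing directly, so the genuinely correct bookkeeping is: we have produced a curve shorter than $\gamma_{x,v}|_{[0,2c-s_1]}$ joining $x$ to $\gamma_{x,v}(2c-s_1)$, whereas $2c-s_1$ could exceed $c$. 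The clean statement to aim for is that $d(x,\gamma_{x,v}(r))<r$ for some $r<c$ or $r=c$, contradicting minimality on $[0,c]$; I expect the main obstacle to be organizing this loop-removal argument so the resulting shortcut lands at a parameter value $\leq c$ where minimality is known to be sharp. Concretely: from $p=\gamma_{x,v}(s_1)=\gamma_{x,v}(s_2)$, the point $\gamma_{x,v}(s_1+s_2-c)$ is reached from $p$ along (the reversal of) $\gamma_{x,v}$ in time $|{(s_1+s_2-c)}-s_2|=c-s_1$ via the backward branch at $s_1$ — wait, that needs $s_1+s_2-c\leq s_1$, i.e. $s_2\leq c$, excluded. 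Via the forward branch at $s_1$ it takes time $(s_1+s_2-c)-s_1=s_2-c>0$, and also from $p$ via $s_2$ forward takes $|s_1+s_2-c-s_2|=c-s_1$. Since $s_2-c<c-s_1\iff s_1+s_2<2c$ — exactly our hypothesis (strict case; the boundary case $=2c$ is handled by a limiting/openness argument or separately) — the point $y\coloneqq\gamma_{x,v}(s_1+s_2-c)$ satisfies $d(p,y)\leq s_2-c$. But $s_1+s_2-c\leq c$, so $y$ lies in the minimizing range and $d(x,y)=s_1+s_2-c$. Then $d(x,p)\leq d(x,y)+d(y,p)\leq (s_1+s_2-c)+(s_2-c)=s_1+2s_2-2c$; combined with $d(x,p)=d(x,\gamma_{x,v}(s_1))=s_1$ (as $s_1<c$) this is consistent and not yet a contradiction, which tells me the sharper move is $d(x,y)\leq d(x,p)+d(p,y)\leq s_1+(s_2-c)$, while $d(x,y)=s_1+s_2-c$ exactly — equality, forcing the concatenated curve from $x$ to $p$ (length $s_1$, minimizing) then $p$ to $y$ along $\gamma_{x,v}$ backward (length $s_2-c$) to itself be a minimizing geodesic of length $s_1+s_2-c\leq c$; by uniqueness of geodesics this broken curve must be unbroken, i.e. smooth at $p$, meaning the two geodesic branches at $p$ agree: $\dot\gamma_{x,v}(s_1)=\pm\dot\gamma_{x,v}(s_2)$. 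If the sign is $+$, then $\gamma_{x,v}$ is periodic and revisits $x=\gamma_{x,v}(0)$, contradicting the boundary point status of $x$; if the sign is $-$ (possible under reversibility), then $\gamma_{x,v}$ retraces itself, again forcing it back to $\partial M$ at a time $<c$, contradicting that $\gamma_{x,v}|_{[0,c]}$ stays interior and minimizes. Either way we reach a contradiction, and the boundary case $s_1+s_2=2c$ follows by the same equality analysis or by a direct continuity argument, completing the proof. $\square$
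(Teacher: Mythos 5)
Your reduction to $s_1<c<s_2$ (with $c\coloneqq\cut(x,v)$) and your use of reversibility are sound, but the core step --- deducing from equality in the triangle inequality that a broken curve through $p$ must be a smooth geodesic --- fails in the main case $s_1+s_2<2c$. You set $y=\gamma_{x,v}(m)$ with $m=s_1+s_2-c$ and correctly deduce $d(p,y)=s_2-c$ and $d(x,y)=d(x,p)+d(p,y)$. You then identify the length-$(s_2-c)$ path from $p$ to $y$ as ``$p$ to $y$ along $\gamma_{x,v}$ backward,'' but the backward segment from $p=\gamma_{x,v}(s_2)$ to $y=\gamma_{x,v}(m)$ has length $s_2-m=c-s_1$, which is strictly larger than $s_2-c$ precisely when $s_1+s_2<2c$; it is therefore not minimizing, and nothing forces it to fit into a smooth geodesic. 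The forward branch $\gamma_{x,v}|_{[s_1,m]}$ does have length $s_2-c$, but its concatenation with $\gamma_{x,v}|_{[0,s_1]}$ is simply $\gamma_{x,v}|_{[0,m]}$ --- already smooth, so the triangle equality carries no new information and yields no contradiction. The case $s_1+s_2<2c$ is in fact handled by a much simpler shortcut that you never invoke: for any $t$ with $(s_1+s_2)/2<t<\min(s_2,c)$, the curve $\gamma_{x,v}|_{[0,s_1]}$ followed (using reversibility) by the reversal of $\gamma_{x,v}|_{[t,s_2]}$ joins $x$ to $\gamma_{x,v}(t)$ with length $s_1+(s_2-t)<t$, so $d(x,\gamma_{x,v}(t))<t$ and hence $\cut(x,v)\leq(s_1+s_2)/2<c$, a contradiction.

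Your broken-curve idea does become correct at the boundary $s_1+s_2=2c$, where $c-s_1=s_2-c$ and the concatenation of $\gamma_{x,v}|_{[0,s_1]}$ with the reversed segment from $\gamma_{x,v}(s_2)$ to $\gamma_{x,v}(c)$ has length exactly $c=d(x,\gamma_{x,v}(c))$, forcing it to be a smooth geodesic and hence $\dot\gamma_{x,v}(s_1)=-\dot\gamma_{x,v}(s_2)$. (Your ``$+$'' subcase does not arise here: smoothness of the concatenation pins the sign to $-$.) But the contradiction you then state --- the geodesic ``returns to $\partial M$ at a time $<c$'' --- is unjustified, since $\gamma_{x,v}(s_1+s_2)=x$ occurs at time $2c>c$. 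The clean contradiction is that reversibility and uniqueness of geodesics give $\gamma_{x,v}(s_1+t)=\gamma_{x,v}(s_2-t)$ for all $t$, and differentiating at $t=(s_2-s_1)/2$ yields $\dot\gamma_{x,v}\bigl((s_1+s_2)/2\bigr)=0$, impossible for a unit-speed geodesic. Note that the paper itself proves this lemma only by deferring to the Riemannian argument of Kurylev--Lassas--Uhlmann (their Lemma~2.1), observing that reversibility lets it carry over to the Finsler setting.
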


\begin{proof}
Since $F$ is reversible this claim can be proven like \cite[lemma 2.1]{kurylev2010rigidity}.
\end{proof}
%\footnote{Muotoilin uudestaan. Huomaa cite-komennon lisätiedon käyttö.}
%We denote $\gamma:=\gamma_{x,y}$, and assume that
%\begin{equation}
%s_1+s_2 \leq 2\cut(x,y) \quad \Rightarrow \quad s_1<\cut(x,y).
%\end{equation}

%The equation \eqref{eq:self_intersection_of_gamma} implies that
%\begin{equation}
%s_2 > \cut(x,y).
%\end{equation}
%Then we denote
%\begin{equation}
%A=\gamma(s_1), \quad B=\gamma(\cut(x,y)), \quad \ell_{BA}=s_2-\cut(x,y)>.
%\end{equation}
%Due to symmetry of $F$ the length of a broken geodesic
%\begin{equation}
%\alpha:=\gamma|_{([0,s_2])} \cup \stackrel{\leftarrow}{\gamma}|_{([s_1,0])},
%\end{equation}
%from $x$ to $x$ is 
%\begin{equation}
%s_1+s_2=d_F(x,A)+d_F(A,B)+\ell_{BA}+d_F(A,x).
%\end{equation}

%The definition of the cut distance function implies
%\begin{equation}
%\ell_{BA}=\mathcal{L} (\stackrel{\leftarrow}{\gamma}|_{[s_2, \cut(x,y)]}) >d_F(A,B)=\cut(x,y)-s_1.
%\end{equation}
%Therefore we end up in a contradiction
%\begin{equation}
%s_1+s_2>2\cut(x,y).
%\end{equation}

\begin{lemma}
\label{Le:con_of_total_time}
Let $z\in \p M$, $t\in (0,\cut(z,\nu(z)))$ and $U \subset TM$ be a neighborhood of $\nu(z)$ that is diffeomorphic to some open set of $\R^{2n-1}\times [0,\infty)$. For any $\epsilon> 0$ we can choose $\delta=\delta(z,t,\epsilon)>0$ such that the following holds: If $v_i \in U, \: i\in \{1,2\}$ satisfy
%\footnote{Is the claim now clear (TS)?}
\begin{equation}
v_1 R_{2t}v_2, \quad \hbox{ and }  \quad \|v_i-\nu(z)\|_e< \delta,
\end{equation}
then there exist $t_1,t_2>0$, so that  
\begin{equation}
\gamma_{v_1}(t_1)=\gamma_{v_2}(t_2),\quad    t_1+t_2 =  2t, \quad \hbox{ and } \quad |t_i-t| <\epsilon.
\end{equation}
Here $\|\cdot\|_e$ is the Euclidean norm on $U$.
\end{lemma}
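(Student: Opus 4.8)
The statement is a continuity/compactness claim: if two vectors $v_1, v_2$ near $\nu(z)$ are related by $R_{2t}$ with $2t < 2\cut(z,\nu(z))$, then the witnessing times $t_1, t_2$ are forced to be close to $t$. I would prove this by contradiction and extraction of a convergent subsequence. Suppose the conclusion fails for some $\epsilon>0$: then there are sequences $v_1^{(k)}, v_2^{(k)} \in U$ with $\|v_i^{(k)}-\nu(z)\|_e \to 0$ and witnessing times $t_1^{(k)}, t_2^{(k)} > 0$ with $t_1^{(k)}+t_2^{(k)} = 2t$ and $\gamma_{v_1^{(k)}}(t_1^{(k)}) = \gamma_{v_2^{(k)}}(t_2^{(k)})$, but $|t_i^{(k)} - t| \geq \epsilon$ for at least one $i$ (and hence, by the sum constraint, for both, with the two deviations of opposite sign).

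Next I would pass to the limit. Since $t_1^{(k)}, t_2^{(k)} \in [0, 2t]$, after extracting a subsequence we may assume $t_i^{(k)} \to \tau_i$ with $\tau_1 + \tau_2 = 2t$ and $|\tau_i - t| \geq \epsilon$; in particular $\tau_1 \neq \tau_2$. The geodesics $\gamma_{v_i^{(k)}}$ converge (in $C^1$ on compact time intervals, by continuous dependence of the geodesic flow on initial conditions) to $\gamma_{\nu(z)}$, so passing to the limit in $\gamma_{v_1^{(k)}}(t_1^{(k)}) = \gamma_{v_2^{(k)}}(t_2^{(k)})$ gives $\gamma_{z,\nu(z)}(\tau_1) = \gamma_{z,\nu(z)}(\tau_2)$ with $\tau_2 > \tau_1 \geq 0$ (relabelling so $\tau_1 < \tau_2$). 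This is precisely a self-intersection of the normal geodesic $\gamma_{z,\nu(z)}$, so Lemma~\ref{Le:self_int} applies and yields $\tau_1 + \tau_2 > 2\cut(z,\nu(z))$, i.e.\ $2t > 2\cut(z,\nu(z))$, contradicting the hypothesis $t < \cut(z,\nu(z))$.

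One point needs care before invoking Lemma~\ref{Le:self_int}: that lemma is stated for inward-pointing boundary vectors $(x,v)\in\inwb{}$, and $\nu(z)$ is exactly such a vector, so the hypothesis is met directly. A second subtlety is that the geodesics $\gamma_{v_i^{(k)}}$ must genuinely exist up to the times $t_i^{(k)}$; but this is guaranteed by the assumption $v_1^{(k)} R_{2t} v_2^{(k)}$, which by definition of $R_t$ asserts the existence of such times with $\pi(\phi_{t_1^{(k)}}(v_1^{(k)})) = \pi(\phi_{t_2^{(k)}}(v_2^{(k)}))$ — i.e.\ the two geodesics reach a common point while still inside $M$, so no boundary-exit issue arises at the relevant times. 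I also need the limiting times $\tau_1, \tau_2$ to stay within the interval where $\gamma_{z,\nu(z)}$ exists and stays in $M$; since each $t_i^{(k)} \le 2t < 2\cut(z,\nu(z))$ and $\cut$ is controlled by the exit time, and since $t < \cut(z,\nu(z))$ already forces $\gamma_{z,\nu(z)}$ to exist on $[0,t]$ as a minimizing geodesic, a short argument using strict convexity of $\partial M$ and Lemma~\ref{lma:exit-continuous} shows the relevant geodesic segments stay in the interior for the times in play.

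The main obstacle I anticipate is the bookkeeping around the neighborhood $U$ (diffeomorphic to an open subset of $\R^{2n-1}\times[0,\infty)$, reflecting that $\nu(z)$ is a boundary vector of $SM$) and making sure the extracted subsequential limits of $t_i^{(k)}$ remain in the valid domain of the limiting geodesic — i.e.\ that nothing degenerates as the base points $\pi(v_i^{(k)})$ march toward $z$ along the boundary. Everything else is a routine compactness-plus-continuous-dependence argument, with Lemma~\ref{Le:self_int} supplying the one genuinely geometric input. The uniformity of $\delta = \delta(z,t,\epsilon)$ is automatic from the contradiction framework: the negation of the conclusion is exactly the existence of the bad sequences above.
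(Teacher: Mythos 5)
Your proof is correct and takes essentially the same approach the paper relies on: the paper's own proof of this lemma is just a pointer to \cite[lemma 2.2]{kurylev2010rigidity}, whose argument is precisely the compactness/contradiction scheme you lay out — extract subsequences of the witnessing times, use continuity of the geodesic flow (and of $\exit$, so that the limiting times $\tau_i\le\exit(\nu(z))$ are in the flow's domain), and invoke the self-intersection inequality $s_1+s_2>2\cut(x,v)$ of Lemma~\ref{Le:self_int} to contradict $t<\cut(z,\nu(z))$.
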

\begin{proof}
The proof is analogous to the proof of \cite[lemma 2.2]{kurylev2010rigidity}.
\end{proof}

%We note that if the relation   $\xi R_T \eta$ holds, then there exists $t,t'>0$ such that
%\begin{equation}
%\gamma_{\xi}(t)=\gamma_{\eta}(t'),\quad t+t'=T.
%\end{equation}
%Suppose that the claim of this lemma is false. Then there exist sequences 
%\begin{equation}
%v^k_i \in U, \: t_{i}^k\geq 0, \quad i\in \{1,2\}, k \in \N
%\end{equation}
%such that
%\begin{equation}
%v_1^k R_{2t}v_2^k \quad  \hbox{ and } \quad t_1^k+t_2^k =  2t<2\cut(z,\nu(z)),  
%\end{equation}
%and moreover
%\begin{equation}
%\lim_{k\to \infty} \|v_i^k-\nu\|_e=0, \quad \hbox{ and } \quad |t_i^k-t|\geq \epsilon, \: \hbox{for all } i\in \{1,2\}, k \in \N.
%\end{equation}
%Then
%\begin{equation}
%|t_1^k-t^k_2|\geq 2\epsilon.
%\end{equation}
%However as sequences $(t_i^k)_{k=1}^\infty$ are bounded there exist convergent subsequences and without loss of generality we may assume
%\begin{equation}
%\lim_{k\to \infty}t^k_i=t_i, \quad t_1+t_2=2t \quad \hbox{ and } \quad |t_1-t_2|\geq \epsilon.
%\end{equation}
%since 
%\begin{equation}
%\gamma_{z,\nu}(t_1)=\gamma_{z,\nu}(t_2)
%\end{equation}
%we have shown a contradiction to Lemma \ref{Le:self_int}.

\medskip

%The second critical distance function is the conjugate distance function
%\begin{equation}
%\con(x,v):=\inf \{t>0: \gamma_{x,v}(t) \hbox{ is conjugate to $x$ along $\gamma_{x,v}$}\}, \quad  (x,v)\in S M, 
%\end{equation}
%This function is lower semi continuous and satisfies
%\begin{equation}
%\con \geq \cut.
%\end{equation}
%Recall that $\gamma_{x,v}(t) \hbox{ is conjugate to $x$ along $\gamma_{x,v}$}$ if the exponential map $\exp_x\colon T_xM \to M$ is singular at $tv$. 

The second critical distance functions is the boundary cut distance function
\begin{equation}
\bou(z):=\sup \{t>0:\: d(z,\gamma_{z,\nu(z)}(t))=t= d(\partial M,\gamma_{z,\nu(z)}(t))\}, \quad z \in \p M.
\end{equation}
%
%and the focal distance function
%\begin{equation}
%\foc(z):=\inf\{t>0:\: \gamma_{z,\nu(z)}(t) \hbox{ is a focal point of $\p M$}, \quad z \in \p M.
%\end{equation}
%
%Recall that  $\gamma_{z,\nu(z)}(t) \hbox{ is a focal point of $\partial M$}$ if the boundary exponential map 
%\begin{equation}
%\exp_{\partial M}\colon M\times \R \to \R, \quad \exp_{\partial %M}(z,t)=\exp_z(t\nu(z))
%\end{equation}
%is singular at $(z,t)$.
% 
These two critical distance functions satisfy the following:

\begin{lemma}[{\cite[lemma 3.8]{de2019inverse}}]
\label{Le:comp_of_crit_func}
For any $z\in \partial M$ it holds that 
\begin{equation}
\cut(z,\nu(z)) > \bou(z). 
%\quad  \foc \geq \bou  \quad \hbox{and} \quad \con(z,\nu(z))> \foc(z).
\end{equation}
\end{lemma}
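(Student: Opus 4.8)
The plan is to compare the two critical times by tracking what each one records along the normal geodesic $\gamma=\gamma_{z,\nu(z)}$. Write $c=\cut(z,\nu(z))$ and $b=\bou(z)$. By definition $b$ is the supremum of times $t$ at which $\gamma$ is simultaneously the shortest curve from $z$ to $\gamma(t)$ and a shortest curve from $\partial M$ to $\gamma(t)$; in particular for every $t<b$ the segment $\gamma|_{[0,t]}$ is minimizing from $z$ to $\gamma(t)$, so by the very definition of $\cut$ we get $t\le c$, hence $b\le c$. The entire content of the lemma is therefore the \emph{strict} inequality, i.e.\ ruling out $b=c$.

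So I would argue by contradiction: suppose $b=c=:T$. First one must check $T<\infty$; this follows because $\partial M$ is strictly convex, so the normal geodesic cannot minimize distance to the boundary for arbitrarily long times (a competitor curve to a nearby boundary point is shorter once $\gamma$ travels far enough — this is the standard strict-convexity argument, cf.\ the use of \cite[Section~8.1]{SUbook} and Lemma~\ref{lma:exit-continuous} above). With $T<\infty$, continuity of $\cut$ and of the distance function let me pass to the limit $t\uparrow T$: the point $p=\gamma(T)$ satisfies $d(z,p)=T$ and $d(\partial M,p)=T$ with $\gamma|_{[0,T]}$ realizing both. Now I exploit that $T=b$ is a \emph{critical} value for the boundary distance: for $t$ slightly larger than $T$ the geodesic $\gamma|_{[0,t]}$ is no longer minimizing from $\partial M$, so either a conjugate/focal point to $\partial M$ occurs at time $T$ along $\gamma$, or there is a second boundary geodesic of the same length $T$ from $\partial M$ to $p$. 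In the focal case, the second variation forces $\gamma|_{[0,t]}$ to fail to be minimizing from $\gamma(0)=z$ as well for $t>T$, which only gives $c\le T$, not the strict inequality — so the focal case alone is not enough and must be handled more carefully. The cleaner route is: at a cut point to $\partial M$ one has, by the standard Finsler first-variation argument, a minimizing boundary geodesic $\sigma$ from some $z'\in\partial M$ to $p$ with $z'\ne z$ (if $z'=z$ and $\sigma\ne\gamma$ we already have two minimizers from $z$ to $p$, so $T>\cut(z,\nu(z))$, contradiction; if the only minimizer is $\gamma$ itself, $p$ is focal, the subcase just discussed). Concatenating $\sigma$ with a short segment of $\gamma$ beyond $p$, or comparing $\sigma$ against $\gamma|_{[0,T+\eps]}$, produces a curve from $\partial M$ of length $\le T$ to a point $\gamma(T+\eps)$ strictly beyond $p$, contradicting $b=T$; alternatively, since $\sigma$ meets $\partial M$ transversally by strict convexity, one can push its endpoint slightly into $M$ along $-\dot\sigma$ to violate minimality. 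Either way $b=c$ is impossible, giving $b<c$.

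The main obstacle, as indicated above, is the focal-point subcase: showing that when $p=\gamma(T)$ is focal to $\partial M$ along $\gamma$ one still gets $T<\cut(z,\nu(z))$ rather than merely $T\le\cut(z,\nu(z))$. The resolution is to note that a point conjugate or focal to a strictly convex boundary at time $T$ forces $\gamma|_{[0,T+\eps]}$ to be strictly non-minimizing from $z$ for small $\eps>0$ (an index-form / second-variation computation: the Jacobi field vanishing at $p$ can be spliced to produce a shorter broken competitor from $z$), hence $\cut(z,\nu(z))\le T-\eps'$ in fact, which is even stronger than what is needed. Since this is exactly the interior statement already available for reversible Finsler metrics, I would cite the relevant cut-locus facts from \cite{de2019inverse} rather than redo the variational computation here.
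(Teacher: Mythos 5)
The paper does not actually prove this lemma; it cites \cite[lemma~3.8]{de2019inverse} directly, so there is no in-text proof to compare against. Your sketch of what the proof might look like has the right skeleton (trivially $\bou(z)\le\cut(z,\nu(z))$, then argue strictness by examining the boundary cut point $p=\gamma(T)$ with $T=\bou(z)$), but the crux — the focal-point subcase, which you correctly flag as the obstacle — is resolved incorrectly and in the wrong direction. You need to conclude $T<\cut(z,\nu(z))$, i.e.\ that $\gamma$ \emph{continues} to minimize from $z$ strictly past $T$. Instead you argue that a focal point at $T$ forces $\gamma|_{[0,T+\eps]}$ to be non-minimizing from $z$ and assert $\cut(z,\nu(z))\le T-\eps'$, which is both the wrong inequality and outright false: by continuity $d(z,\gamma(T))=T$, so $\cut(z,\nu(z))\ge T$. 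The Jacobi-field splicing you invoke also fails as stated, because a Jacobi field realizing a focal point to $\partial M$ at time $T$ vanishes at $\gamma(T)$ but is a \emph{nonzero} vector tangent to $\partial M$ at $z=\gamma(0)$; the broken competitor it produces therefore does not fix the endpoint $z$ and gives no information about minimality from $z$.

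The fact you actually need is the reverse of what you claim: for a strictly convex boundary, the first focal point of $\partial M$ along $\gamma_{z,\nu(z)}$ occurs strictly \emph{before} the first conjugate point to $z$. This is an index-form comparison — the fixed-endpoint index form at $z$ is the restriction of the focal index form to variations vanishing at $z$, and strict convexity makes the inequality strict. Since $\gamma|_{[0,T]}$ is a $\partial M$-minimizer it has no interior focal point, so the first conjugate point to $z$ lies strictly past $T$. Moreover, any second $z$-minimizing geodesic to $p$ would, since $d(z,p)=T=d(\partial M,p)$, also be a $\partial M$-minimizer and hence normal to $\partial M$ at $z$, forcing it to equal $\gamma$; so no second $z$-minimizer exists. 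The cut-point dichotomy then gives $\cut(z,\nu(z))>T$, the desired contradiction. (This same normality observation shows your $z'=z$ subcase is vacuous; and note that ``two minimizers of length $T$ from $z$'' only yields $T\ge\cut(z,\nu(z))$, not the strict inequality $T>\cut(z,\nu(z))$ you assert.) This index-form comparison is presumably exactly the content of \cite[lemma~3.8]{de2019inverse}, which is why the authors cite it rather than reprove it.
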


\subsection{Family of focusing directions}

Let $(M,F)$ be a compact Finsler manifold of dimension $3$ or higher with reversible Finsler function $F$ and strictly convex boundary. Let $x \in M^{int}$ and $v \in S_xM$ be a direction such that the corresponding geodesic $\gamma_v$ is the distance minimizer from $x$ to $z_x\in \partial M$ a closest boundary point to $x$. In the proof of lemma \ref{lma:exit-continuous} we showed that the exit time function $\exit$ is smooth in some neighborhood $V \subset S_x M$ of $v $, and moreover implicit function theorem implies that the map
\[
H\colon V \ni \eta \mapsto
%\eta \to \gamma_{\eta}(\exit(\eta))
\pi(\phi_{\exit(\eta)}(\eta))\in \p M
\]
is a diffeomorphism to some neighborhood $U\subset \p M$ of $z_x$. Using this identification we define a smooth inward pointing unit length vector field on $U$
\[
V(z):=-\phi_{\exit(\eta)}(\eta), \quad \eta:=H^{-1}(z).
\]
This vector field satisfies 
\begin{equation}
V(z_1)R_{T(z_1,z_2)}V(z_2), \hbox{ and } V(z_x)=\nu(z_x),
\end{equation}
where 
\begin{equation}
T(z_1,z_2):=\exit(H^{-1}(z_1))+\exit((H^{-1}(z_2)).
\end{equation}
Moreover, the function
%\footnote{Function of what? $z$? $\eta$? Is there a domain? I can't make sense of what is exactly meant. Should be more clear now (TS)}
\begin{equation}
t\colon U \to \R, \quad t(z):=\frac{1}{2}T(z,z)=\exit((H^{-1}(z))
\end{equation}
is smooth and its differential vanishes at $z_x$
%satisfies
%\begin{equation}
%\der t(z)\bigg|_{z=z_x}= 0, \quad 
%\end{equation}
which is a closest boundary point to $x$. Thus the geodesics given by initial conditions $(z,V(z)), \: z \in U$ focus at the common interior point $x$ at time $t(z)$.  

We change the point of view and set the following definition:
\begin{definition}
\label{def:focusing_family}
Let $z_0\in \partial M$ and $t_0 \in (0,\exit(\nu(z_0)))$. We say that a collection 
\begin{equation}
F(z_0,t_0):=\{U, V(\cdot),t(\cdot)\}
\end{equation}
where $U\subset \p M$ is a neighborhood of $z_0$, $V\colon U\to \partial_{in}SM$ is a smooth vector field and  $t\colon U \to (0,\infty)$ is a smooth  function, is called a family of focusing directions around $(z_0,t_0)$ if  
\begin{align}
\label{eq:focusing_family_1}
&V(z_1)R_{t(z_1)+t(z_2)}V(z_2), \: \hbox{ for all } z_1,z_2 \in U, 
\\
\label{eq:focusing_family_2}
&V(z_0)=\nu(z_0), \quad t(z_0)=t_0,  \quad  \hbox{ and }  \quad  \der t(z)\bigg|_{z=z_0}= 0.
\end{align}
\end{definition}

We note that the broken scattering relation determine all the families of focusing directions, but it is possible that not all of them focus in the sense of
\begin{equation}
    \label{eq:focus_real}
    \pi(\phi_{t(z)}(V(z)))=\pi(\phi_{t_0}(\nu(z_0))), \quad \hbox{ for all } z \in U.
\end{equation}
Next we give the following result that guarantees the actual focusing if the focusing time $t_0$ is small enough. 
\begin{lemma}
\label{lma:focusin_families_focus}
Let $z_0\in \partial M$, $t_0 \in (0,\exit(\nu(z_0)))$ and $F(z_0,t_0)$
\\
 $=\{U', V(\cdot),t(\cdot)\}$ be a family of focusing directions around $(z_0,t_0)$. If
\begin{equation}
t_0<  \cut(z_0,\nu(z_0))=:\tau_{M}(z_0),
\end{equation}
then there exists a neighborhood $U \subset U'$ of $z_0$ such that \eqref{eq:focus_real} holds true.
\end{lemma}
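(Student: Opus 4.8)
\textbf{Proof proposal for Lemma~\ref{lma:focusin_families_focus}.}

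The plan is to argue by contradiction using the uniqueness of the minimizing geodesic from $z_0$ together with the critical-distance estimates from Section~\ref{ssec:focus_fam}. Suppose that no matter how small a neighborhood $U\subset U'$ of $z_0$ we take, there is some $z\in U$ for which \eqref{eq:focus_real} fails. Then there is a sequence $z_k\to z_0$ in $U'$ with $\pi(\phi_{t(z_k)}(V(z_k)))\neq\pi(\phi_{t_0}(\nu(z_0)))=:x_0$. Set $p_k:=\pi(\phi_{t(z_k)}(V(z_k)))$ and $q_k:=\pi(\phi_{t(z_0)}(\nu(z_0)))=x_0$. The defining relation \eqref{eq:focusing_family_1} with $z_1=z_k$, $z_2=z_0$ gives $V(z_k)R_{t(z_k)+t_0}\nu(z_0)$, so there are $t_1^k,t_2^k>0$ with $t_1^k+t_2^k=t(z_k)+t_0$ and $\gamma_{V(z_k)}(t_1^k)=\gamma_{\nu(z_0)}(t_2^k)$. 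First I would use the continuity of $\exit$ (Lemma~\ref{lma:exit-continuous}), the smoothness of $V$ and $t$, and the condition $\der t(z_0)=0$ to control the split: I expect to apply Lemma~\ref{Le:con_of_total_time} with the base point $z_0$ and time $t_0$ — valid since $t_0<\cut(z_0,\nu(z_0))$ — to conclude that, along the sequence, $t_1^k\to t_0$ and $t_2^k\to t_0$. Thus $\gamma_{\nu(z_0)}(t_2^k)\to\gamma_{\nu(z_0)}(t_0)=x_0$, and by continuity of the geodesic flow $\gamma_{V(z_k)}(t_1^k)\to x_0$ as well.

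The heart of the argument is then to identify the limiting geodesic issued from $V(z_k)$ with the normal geodesic from $z_0$. By continuity $V(z_k)\to V(z_0)=\nu(z_0)$, so $\gamma_{V(z_k)}$ converges in $C^1$ on compact time intervals to $\gamma_{\nu(z_0)}$; in particular $p_k=\gamma_{V(z_k)}(t(z_k))\to\gamma_{\nu(z_0)}(t_0)=x_0$. So the ``wrong'' focusing points $p_k$ actually converge to the right one $x_0$; the contradiction must be extracted at the infinitesimal level rather than merely at the level of limits. To do this I would bring in Lemma~\ref{lma:focusin_families_focus}'s companion structure: differentiating \eqref{eq:focusing_family_1} at $z_1=z_2=z_0$ and using $\der t(z_0)=0$ shows that the map $z\mapsto\pi(\phi_{t(z)}(V(z)))$ has vanishing differential at $z_0$ in the sense that its image stays tangent to $x_0$ to first order — more precisely, the family is a genuine ``focusing'' family infinitesimally. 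The point is that near $z_0$, because $t_0$ lies below the cut distance, the exponential map $\exp_{z_0}$ (equivalently, the map $H$ used to build focusing families from interior points) is a local diffeomorphism, so there is a well-defined interior point $x_0=\gamma_{\nu(z_0)}(t_0)$ with a neighborhood on which ``closest boundary point + minimizing direction'' is a smooth inverse. I would then show that for $k$ large the geodesic segment $\gamma_{V(z_k)}|_{[0,t(z_k)]}$ is minimizing from $z_k$ to $p_k$ (its length $t(z_k)$ is close to $t_0<\cut$, hence below the cut distance, so it cannot have passed a cut point), and likewise the reversed segment from $p_k$ back — using reversibility — realizes $d(p_k,z_k)$.

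The key step, and the one I expect to be the main obstacle, is upgrading the convergence $p_k\to x_0$ to the equality $p_k=x_0$. The mechanism should be: the two geodesics $\gamma_{V(z_k)}|_{[0,t_1^k]}$ and $\gamma_{\nu(z_0)}|_{[0,t_2^k]}$ meet at a common point $y_k:=\gamma_{V(z_k)}(t_1^k)=\gamma_{\nu(z_0)}(t_2^k)$ with $t_2^k<\cut(z_0,\nu(z_0))$, so $\gamma_{\nu(z_0)}|_{[0,t_2^k]}$ is the \emph{unique} minimizing geodesic from $z_0$ to $y_k$ and has no conjugate points; but the same relation \eqref{eq:focusing_family_1} applied with $z_1=z_k$, $z_2=z$ for a second parameter $z$ near $z_0$, combined with Lemma~\ref{Le:self_int} forbidding ``early'' self-intersections, pins down $t_1^k=t(z_k)$ and $t_2^k=t_0$ exactly (not just in the limit), forcing $p_k=\gamma_{V(z_k)}(t(z_k))=y_k=\gamma_{\nu(z_0)}(t_0)=x_0$. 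Concretely I would run Lemma~\ref{Le:con_of_total_time} to get $|t_i^k-t_0|<\epsilon$ for every $\epsilon$, then feed this into the relation with varying second endpoint: since $t$ is smooth with $\der t(z_0)=0$, the only way all the times $t(z_k)+t(z)$ can be realized by intersections of the corresponding geodesics for all $z$ in a neighborhood is that all these geodesics pass through the single point $x_0=\gamma_{\nu(z_0)}(t_0)$, because two distinct geodesics through $z_0$-close starting points below the cut distance that agree in length and meet can only meet at the unique point the normal geodesic determines. Making this rigidity argument precise — essentially showing that a focusing family below the cut distance is forced to focus — is where the real work lies, and it should use Lemma~\ref{Le:comp_of_crit_func} to guarantee that $t_0<\cut(z_0,\nu(z_0))$ leaves room below both the cut distance and, after propagating, the relevant exit/cut data at the neighboring points.
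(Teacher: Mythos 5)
Your proposal correctly identifies the overall strategy (contradiction, controlling the split times via Lemma~\ref{Le:con_of_total_time}, leveraging $t_0<\cut(z_0,\nu(z_0))$) but leaves the central step unproven, and you say so yourself: ``Making this rigidity argument precise --- essentially showing that a focusing family below the cut distance is forced to focus --- is where the real work lies.'' That is exactly the content of this lemma, and what you offer in its place (``two distinct geodesics through $z_0$-close starting points below the cut distance that agree in length and meet can only meet at the unique point the normal geodesic determines'') is not a true statement in Finsler geometry on its own, and is not how the paper closes the gap.

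The paper's proof is explicitly an application of Lemma~\ref{Le:intersect_of_geo_and_foc_fam}, which you never invoke. That auxiliary lemma supplies the actual rigidity mechanism: one shows (step~II of its proof) that the map $E(z)=-\phi_{s(z)}(V(z))$ has a differential at $z_0$ that is purely vertical and injective, hence (step~IV) the map $\Phi(z,\lambda)=\exp_z(\lambda V(z))$ is a local diffeomorphism near $(z_0,r_0)$ whenever $r_0\neq t_0$; then (step~V) the induced map $\widetilde\Phi$ would be a local diffeomorphism from an $(n-1)$-dimensional neighborhood of $z_0$ in $\partial M$ onto a subset of the one-dimensional image of a single geodesic, which is impossible precisely because $n-1\geq 2$. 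This dimension-counting contradiction is the key idea, and it is the only place where the hypothesis $\dim M\geq 3$ enters --- a hypothesis your sketch never uses, which is a strong indication that the argument as written cannot be completed. (Lemma~\ref{Le:self_int}, which you propose to use to ``pin down $t_1^k=t(z_k)$ and $t_2^k=t_0$ exactly,'' is about self-intersections of a single geodesic and does not apply to intersections of $\gamma_{V(z_k)}$ with the distinct geodesic $\gamma_{\nu(z_0)}$.) To repair the proposal you would need to either import Lemma~\ref{Le:intersect_of_geo_and_foc_fam} wholesale, or reproduce its local-diffeomorphism-onto-a-curve contradiction, including the verticality and injectivity of $\der E|_{z_0}$ that make $\Phi$ a local diffeomorphism.
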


For the proof of the lemma we need the following auxiliary result.

\begin{lemma}
\label{Le:intersect_of_geo_and_foc_fam}
Let $z_0 \in \p M,\: t_0>0$ and the family of focusing directions $F(z_0,t_0)=\{U,V(\cdot),t(\cdot)\}$ around $(z_0,t_0)$ be as in lemma \ref{lma:focusin_families_focus}. Let $\gamma$ be some geodesic that intersects $\gamma_{V(z_0)}$ at $r_0$ transversely. If there exist functions $r,\rho\colon U \to \R$  for which hold
\begin{equation}
\rho(z_0)=0,\quad r(z_0)=r_0, \quad \gamma(\rho(z))=\gamma_{V(z)}(r(z)),
%\quad \dot{\gamma}(0)\not\parallel\dot \gamma_{\nu}(r_0)
\end{equation}
and 
\begin{equation}
0\leq r(z)\leq r_1< \tau_{M}(z_0), \quad |\rho(z)|\leq \rho_1< \hbox{inj}(M).
\end{equation}
Then
%\begin{equation}
$
t_0=r_0.
$
%\end{equation}
\end{lemma}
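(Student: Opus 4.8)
\textbf{Proof proposal for Lemma~\ref{Le:intersect_of_geo_and_foc_fam}.}

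The plan is to differentiate the focusing relation \eqref{eq:focusing_family_1} together with the intersection identity $\gamma(\rho(z))=\gamma_{V(z)}(r(z))$ at $z=z_0$, and to compare the resulting first-variation data with what \eqref{eq:focusing_family_2} forces. The idea is that $t_0<\tau_M(z_0)$ guarantees there is no conjugate-point degeneracy along $\gamma_{V(z_0)}$ up to time $t_0$, so the geodesic variation $z\mapsto\gamma_{V(z)}$ is as nondegenerate as possible near the focusing point, and the only way for the intersection with the fixed transverse geodesic $\gamma$ to persist (with the stated uniform bounds keeping us inside the injectivity radius and below the cut time) is that the focusing point sits exactly at parameter $r_0$ along $\gamma_{V(z_0)}$, i.e. $r_0$ equals the focusing time $t_0$.

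Concretely, first I would set up the Jacobi field $J(r)=\partial_z\big[\gamma_{V(z)}(r)\big]_{z=z_0}$ along the geodesic $\gamma_{V(z_0)}$; because $\der t(z_0)=0$, the variation of the \emph{endpoint} $\gamma_{V(z)}(t(z))$ at $z=z_0$ is $J(t_0)$ up to the (vanishing) contribution of $\der t$, and the focusing property \eqref{eq:focus_real} — which we are in the process of trying to establish, so instead I would use only \eqref{eq:focusing_family_1} — controls $J$ at time $t_0$. The clean way is to use Lemma~\ref{Le:con_of_total_time}: applying it to pairs $V(z_1)R_{t(z_1)+t(z_2)}V(z_2)$ with $z_1,z_2$ near $z_0$ shows the two geodesic halves meet at interior parameters $t_1(z_1,z_2),t_2(z_1,z_2)$ close to $t_0$ with $t_1+t_2=t(z_1)+t(z_2)$; differentiating the meeting-point identity $\gamma_{V(z_1)}(t_1)=\gamma_{V(z_2)}(t_2)$ and using $\der t(z_0)=0$ pins down $J(t_0)$ (the first-order variation of the common point) as a well-defined vector field, and shows the directions $V(z)$ genuinely focus to first order at $\gamma_{V(z_0)}(t_0)$. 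Then, differentiating $\gamma(\rho(z))=\gamma_{V(z)}(r(z))$ at $z_0$ gives $\dot\gamma(r_0)\,\der\rho = J(r_0)+\dot\gamma_{V(z_0)}(r_0)\,\der r$, an identity in $T_{\gamma(r_0)}M$ whose left side lies along $\dot\gamma(r_0)$.

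The main obstacle — and the step I expect to cost the most work — is extracting $r_0=t_0$ from this linear identity: one must argue that, since $\gamma$ meets $\gamma_{V(z_0)}$ \emph{transversely} at $r_0$ and all the Jacobi fields arising from the focusing family vanish (to the appropriate order) precisely at $r=t_0$ within $[0,r_1]$ (here $r_1<\tau_M(z_0)$ rules out an earlier conjugate point, and $|\rho|\le\rho_1<\mathrm{inj}(M)$ keeps the picture embedded), the point $\gamma_{V(z_0)}(r_0)$ where the family's geodesics can be made to pass through a moving point of $\gamma$ has to be the focus itself. If instead $r_0\ne t_0$, the map $z\mapsto\gamma_{V(z)}(r(z))$ would have surjective (onto a full transversal) first-order image at $z_0$ by non-conjugacy, forcing $\gamma$ to be tangent to $\gamma_{V(z_0)}$ at $r_0$ — contradicting transversality. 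So the argument is: assume $r_0\ne t_0$, use $t_0<\tau_M(z_0)$ (hence no conjugate point, full-rank first variation of the "sheet" $\{\gamma_{V(z)}(r)\}$ transverse to $\gamma_{V(z_0)}$ at parameter $r_0$), derive that $\gamma$ cannot cross this sheet transversely while staying below the cut distance and inside the injectivity radius, and thus contradict the hypothesis that $\gamma$ meets $\gamma_{V(z_0)}$ transversely at $r_0$. This forces $t_0=r_0$.
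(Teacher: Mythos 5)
Your broad strategy — show the first variation of the focusing sheet is nondegenerate away from the focus, and derive a contradiction with the intersection data — is the same kind of argument the paper uses. But the proposal has two genuine gaps that matter.

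\emph{First, you differentiate quantities that are not known to be smooth.} The lemma only posits that $r,\rho\colon U\to\R$ exist and satisfy pointwise bounds; there is no regularity hypothesis. Your key identity $\dot\gamma(r_0)\,\der\rho = J(r_0)+\dot\gamma_{V(z_0)}(r_0)\,\der r$ presupposes $\der r$ and $\der\rho$ exist. The paper has to work around exactly this: it first proves in Step (III) that $r$ is merely \emph{continuous} at $z_0$ (via a compactness/uniqueness-of-minimizers argument below the cut distance and inside the injectivity radius), and then, instead of differentiating $r$, it constructs a \emph{smooth} function $\widehat r$ via the implicit function theorem applied to a transversal $\Sigma\supset\gamma$ and the smooth map $\Phi(z,\lambda)=\exp_z(\lambda V(z))$, and uses continuity to conclude $r=\widehat r$ locally. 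You would need some substitute for this step.

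\emph{Second, the mechanism of the contradiction is not tangency but a dimension count, and the hypothesis $\dim M\geq 3$ is essential and unmentioned in your proposal.} You claim that non-degeneracy of the sheet near $r_0\neq t_0$ ``forces $\gamma$ to be tangent to $\gamma_{V(z_0)}$ at $r_0$.'' That implication does not follow from the linear identity you wrote down; a nonzero Jacobi field at $r_0$ does not preclude a transverse intersection. What actually happens in the paper's Steps (IV)–(V) is this: for $r_0<t_0$ the map $\Phi(z,\lambda)=\exp_z(\lambda V(z))$ is a local diffeomorphism near $(z_0,r_0)$ (here the paper uses that $\der E|_{z_0}$ has purely vertical, injective image $\Theta$, together with $0<t_0-r_0<\tau_M(z_0)$ and reversibility), so the restricted map $\widetilde\Phi(z)=\Phi(z,r(z))$ is a local diffeomorphism from the $(n-1)$-dimensional set $U$ into $\Sigma$. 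But its image lies in the $1$-dimensional curve $\gamma$, which is impossible when $n-1\geq 2$. This is where $\dim M\geq 3$ enters, and it is the actual contradiction — not a tangency violation. As stated, your argument would go through (vacuously) in dimension two, where the conclusion of the lemma is not available by these means; that is a signal that the tangency step is not sound.

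A smaller point: the Jacobi fields from the focusing family do \emph{not} vanish at $t_0$. What the paper shows (Step II) is that the horizontal component of $\der E|_{z_0}$ vanishes while the vertical component $\Theta$ is an isomorphism — positions focus to first order, directions fan out. Phrasing this as ``Jacobi fields vanish at $r=t_0$'' risks conflating the horizontal and vertical parts of the variation, which is exactly the structure the paper uses to prove $\Theta$ is injective via the map $G(z)=\exp_{x(z)}(s(z)E(z))=z$.
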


\begin{proof}
The proof consists of several steps.

\medskip
\textbf{(I)} Since $F(z_0,t_0)$ is a family of focusing directions the equation \eqref{eq:focusing_family_1} implies the existence of functions $s,\widehat{s}\colon U \to [0,\infty)$ that satisfy
\begin{equation}
\label{eq:func-s-hat-s}
\gamma_{V(z)}(s(z))=\gamma_{V(z_0)}(\widehat{s}(z)), \quad s(z)+\widehat{s}(z)=t(z)+t_0.
\end{equation}
Since vector field $V$ is continuous, equation \eqref{eq:focusing_family_2} and lemma \ref{Le:con_of_total_time} imply
\begin{equation}
s(z)\to t_0, \quad \widehat s(z) \to t_0, \quad \hbox{ as } z \to z_0.
\end{equation}
Therefore 
$
s(z_0)=\widehat{s}(z_0)=t_0,
$
and by an analogous proof to one given in \cite[lemma 2.8]{kurylev2010rigidity}
we show that the functions $s, \: \widehat{s}$ are smooth near $z_0$ and satisfy
\begin{equation}
\der s(z_0)=\der \widehat{s}(z_0)=0.
\end{equation}

\medskip

\textbf{(II)} 
Let $W\subset \p M$ be a neighborhood of $z_0$ where functions $s,\widehat{s}$ are smooth. We consider a map
\begin{equation}
E\colon W \to SM, \quad E(z):=-\phi_{s(z)}(V(z)).
%=:(x(z),\eta(z)).
\end{equation}
We begin with studying the differential
$
%\begin{equation}
%\label{eq:function_E}
\der E|_{z_0}\colon T_{z_0}\p M \to T_{(x,\eta)}SM, \: (x,\eta):=E(z_0).
%\end{equation}
$
Recall that the tangent bundle $T(SM)$ has a canonical decomposition to  Horizontal $H(SM)$ and Vertical $V(SM)$ sub-bundles. We denote the projections from $T(SM)$ to these bundles by $P_H$ and $P_V$ respectively. 
We note first that
\begin{equation}
x(z):=(\pi \circ E)(z)=\gamma_{V(z)}(s(z))=\gamma_{\nu}(\widehat s(z)).
\end{equation}
Also a simple computation shows that after identifying $H_{(x,\eta)}(SM)$ to $T_xM$ we have
$
%\begin{equation}
\der x|_{z_0}=(P_H\circ(\der E))|_{z_0}.
$
%\end{equation}
Therefore
\begin{equation}
(P_H\circ(\der E))|_{z_0}=\der\left[\gamma_{V(z)}(s(z))\right]\bigg |_{z=z_0}\! \! \!  =\der \left[\gamma_{\nu}(\widehat s(z))\right]\bigg |_{z=z_0} \! \!\!   =\dot{\gamma}_\nu(t_0)\otimes \der \widehat s(z_0)=0,
\end{equation}
implies that
\begin{equation}
\der E\bigg|_{z_0}\colon T_{z_0} \p M\to T_{(x,\eta)} SM =H_{(x,\eta)}(SM)\oplus V_{(x,\eta)}(SM), \quad \der E\bigg|_{z_0} v=(0,\Theta v),
\end{equation}
where 
%\begin{equation}
$
\Theta \colon T_{z_0}\p M \to V_{(x,\eta)}(SM)
$
%\end{equation}
is a linear map. Recall that $V_{(x,\eta)}(SM)$ is isomorphic to $T_\eta(S_xM)$, which is $(n-1)$-dimensional as also $T_{z_0}\p M$. Therefore %\begin{equation}
%\dim V_{(x,\eta)}(SM)=\dim T_{z_0}\p M=n-1.
%\end{equation}
to show that $\Theta$ is a linear isomorphism it suffices to prove that it is injective. We define a map
\begin{equation}
G(z):=\exp_{x(z)}(s(z)E(z))=z, \quad \hbox{ for all } z \in W.
\end{equation}
Thus the differential $\der G$ at $x_0$ is an identity operator on $T_{z_0}\p M$. Since $s(z_0)=t_0$ is less than the cut distance $\cut(z_0,\nu(z_0))$ and $\der s(z)|_{z=z_0}=\der x(z)|_{z=z_0}=0$ we have
\begin{equation}
\der G\bigg|_{z=z_0}v=\der(\exp_{x})\bigg|_{t_0E(z_0)}t_0\der E|_{z=z_0}v=\der(\exp_{x})\bigg|_{t_0 E(z_0)}t_0\Theta v=v.
\end{equation}
This implies that $\Theta$ is an injection.

\medskip

\textbf{(III)}
Now we prove that the function $r$ given in the claim of this lemma is continuous at $z_0$. If this is not true there exist $\epsilon>0$ and a sequence $z_k\in \p M$ which converges to $z_0$ but for which hold
\begin{equation}
|r_k-r_0|>\epsilon, \quad r_k:=r(z_k).
\end{equation}
Since functions $r$ and $\rho$ are bounded we can without loss of generality assume that 
$
%\begin{equation}
r_k\to r' <\tau_{M}(z_0), \: |r'-r_0| \geq \epsilon, \: \rho_k:=\rho(z_k)\to \rho'\in \R, \hbox{ such that } |\rho'|< \hbox{inj}(M).
%\end{equation}
$
Thus
\begin{equation}
\gamma(\rho')=\lim_{k \to \infty}\gamma(\rho_k)=\lim_{k \to \infty}\gamma_{V(z_k)}(r_k)=\gamma_{\nu}(r').
\end{equation}
Since $r_0,r' <\tau_{M}(z_0) $ we have that
$
%\begin{equation}
x:=\gamma_{\nu}(r_0)=\gamma(0), \hbox{ and } x':=\gamma_{\nu}(r')=\gamma(\rho')
%\end{equation}
$
are two different points where $\gamma$ and $\gamma_\nu$ intersect. Since
$
%\begin{equation}
r_0,r'< \tau_{M}(z_0),$ and 
$|\rho'|< \hbox{inj}(M),
%\end{equation}
$
there are two different distance minimizing geodesics connecting $x$ to $x'$, which is not possible.

\medskip
\textbf{(IV)} Let us then assume that $r_0< t_0.$ We study a map
\begin{equation}
\Phi\colon U \times \R \to M, \quad \Phi(z,\lambda)=\exp_z(\lambda V(z)),
\end{equation}
and show that it is a local diffeomorphism near $(z_0,r_0)$. Since $0<t_0-r_0< \tau_{M}(z_0)$ and the Finsler function $F$ is reversible, the map $\exp_{x_0}$ is a local diffeomorphism near $(t_0-r_0)\eta$, where
$
%\begin{equation}
x_0=\gamma_{\nu}(t_0-r_0), \: \eta_0 =-\dot \gamma_{\nu}(t_0-r_0).
%\end{equation}
$
Thus
\begin{equation}
\der\exp_{x_0}\bigg|_{(t_0-r_0)\eta_0}\colon T_{(t_0-r_0)\eta}(T_{x_0}M) \to T_{z_0}M 
\end{equation}
is a linear isomorphism. Next we note that 
\begin{equation}
\Phi(z,\lambda)=\gamma_{E(z)}(s(z)-\lambda)=\exp_{x(z)}((s(z)-\lambda)E(z)),
\end{equation}
where $E$ and $s(\cdot)$ are the same maps as in parts \textbf{(I)} and \textbf{(II)} of the proof. Since the differentials of $x$ and $s$ vanish at $z_0$ we have
\begin{equation}
\der_\lambda\Phi\bigg|_{(z_0,r_0)}=-\der \exp_{x_0}\bigg|_{(t_0-r_0)E(z_0)}E(z_0):=-AE(z_0), \: 
\end{equation}
and
\begin{equation}
\der_z\Phi\bigg|_{(z_0,r_0)}=(t_0-r_0)A \der E\bigg|_{z_0}.
\end{equation}
Therefore for any $(v,\lambda)\in T_{z_0}\p M \times \R$ holds
\begin{equation}
\der\Phi\bigg|_{(z_0,r_0)}(v,\lambda)=A((t_0-r_0)\Theta v-\lambda E(z_0)).
\end{equation}
Since $A$ is invertible it holds that 
\begin{equation}
\der\Phi\bigg|_{(z_0,r_0)}(v,\lambda)=0 \quad \hbox{if and only if} \quad (t_0-r_0)\Theta v-\lambda E(z_0)=0.
\end{equation}
However this can only happen if $(v,\lambda)=0$ as $\Theta$ is injective and 
\begin{equation}
 \Theta v \in T_{E(z_0)}S_{x_0}M \quad \hbox{implies} \quad g_{E(z_0)}(E(z_0), \Theta v)=0.
\end{equation}
Due to the inverse function theorem $\Phi$ is a local diffeomoprhism near $(z_0,r_0)$.

\medskip
\textbf{(V)} Let $\Sigma$ be a $(n-1)$-dimensional surface in $M$ such that $\gamma$ is a curve on $\Sigma$ and $\gamma_{\nu}$ is transverse to $\Sigma$ at $r_0$. 
Due to implicit function theorem there exists a smooth function $\widehat r \colon U \to \R$ so that
\begin{equation}
\Phi(z,\lambda)\in \Sigma, \quad \hbox{if and only if} \quad \lambda=\widehat r(z) \hbox{ for all } z\in U, \quad \hbox{ and } \quad  \widehat r(z_0)=r_0.
\end{equation}
Since $r(\cdot)$ is continuous and satisfies 
$
%\begin{equation}
\Phi(z,r(z))=\gamma(\rho(z))\in \Sigma
%\end{equation}
$
it holds that near $z_0$ functions $r$ and $\widehat r$ coincide. Therefore the map
\begin{equation}
\widetilde \Phi\colon U \ni z \mapsto \Phi(z,r(z)) \in \Sigma
\end{equation}
is smooth and its image is contained in the image of $\gamma$. However we have proven that $\widetilde \Phi$ is a local diffeomorphism  near $z_0$. Thus we arrive into a contradiction since $\widetilde \Phi$ maps $(n-1)$-dimensional surface $(n-1\geq 2)$ onto $1$-dimensional surface. Thus $r_0 < t_0$ is false. By an analogous argument we can prove that $r_0> t_0$ is also false and therefore it must hold that $r_0=t_0.$
\end{proof}

We are ready to present the proof for lemma \ref{lma:focusin_families_focus}.

\begin{proof}[Proof of lemma~\ref{lma:focusin_families_focus}]
The proof is an adaptation of \cite[proof of theorem 2.6, step 4]{kurylev2010rigidity} where one uses lemmas \ref{Le:con_of_total_time} and \ref{Le:intersect_of_geo_and_foc_fam}.
\end{proof}

\subsection{Boundary distance functions}
We start with giving a stronger formulation of lemma \ref{Le:intersect_of_geo_and_foc_fam}.

\begin{lemma}
\label{Le:intersect_of_geo_and_foc_fam_part2}
Let $z_0 \in \p M$, $ 0<t_0<\tau_{M}(z_0)$, and $F(z_0,t_0)=\{U,V(\cdot),t(\cdot)\}$ be a family of focusing directions around $(z_0,t_0)$. Let $\gamma$ be some geodesic that intersects every geodesic of $F(z_0,t_0)$ in the sense that 
\begin{equation}
\gamma(\rho(z))=\gamma_{V(z)}(r(z)),
\end{equation}
for some functions $\rho, r\colon U \to \R$ satisfying
\begin{equation}
0\leq r(z)\leq r_1<  \tau_{M}(z_0), \quad |\rho(z)|< L, \hbox{ for some } L>0.
\end{equation}
If in addition $h(z):=r(z)+\rho(z)$ is continuous then for any $z \in \p M$ which is close to $z_0$ holds that
\begin{equation}
\gamma(h(z_0)-t_0)=\gamma_{V(z)}(t(z)).
\end{equation}
That is all geodesics of $F(z_0,t_0)$ meet at the same point $\gamma(h(z_0)-t_0)$.
\end{lemma}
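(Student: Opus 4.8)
The plan is to deduce the statement from Lemma~\ref{lma:focusin_families_focus} and Lemma~\ref{Le:intersect_of_geo_and_foc_fam}; the genuinely new content is only the explicit identification of the common focal point. First, since $0<t_0<\tau_{M}(z_0)$, Lemma~\ref{lma:focusin_families_focus} provides a neighbourhood of $z_0$ --- which we again call $U$ --- on which the family really focuses, i.e.
\[
\gamma_{V(z)}(t(z))=x\qquad\text{for all }z\in U,\qquad x:=\pi(\phi_{t_0}(\nu(z_0)))=\gamma_{\nu(z_0)}(t_0).
\]
Hence the asserted identity $\gamma(h(z_0)-t_0)=\gamma_{V(z)}(t(z))$ is equivalent to $\gamma(h(z_0)-t_0)=x$, and what remains is to locate the $\gamma$-parameter at which $\gamma$ passes through $x$.

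Assume first that $\gamma$ meets $\gamma_{V(z_0)}=\gamma_{\nu(z_0)}$ transversely at the intersection point $\gamma(\rho(z_0))=\gamma_{\nu(z_0)}(r(z_0))$. After a harmless shift of the parameter of $\gamma$ we may take $\rho(z_0)=0$, and then, after shrinking $U$, the hypotheses of Lemma~\ref{Le:intersect_of_geo_and_foc_fam} are met: the bound $0\le r(z)\le r_1<\tau_{M}(z_0)$ is given, while $|\rho(z)|<\hbox{inj}(M)$ near $z_0$ follows once $\rho$ is known to be continuous at $z_0$, which in turn follows from the assumed continuity of $h=r+\rho$ combined with the continuity of $r$ at $z_0$ established in part~\textbf{(III)} of the proof of Lemma~\ref{Le:intersect_of_geo_and_foc_fam} (there one only uses that the distinct geodesics $\gamma$ and $\gamma_{\nu(z_0)}$ meet in a discrete set and that $\gamma_{\nu(z_0)}$ is injective on $[0,r_1]$ because $r_1<\tau_{M}(z_0)$). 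Lemma~\ref{Le:intersect_of_geo_and_foc_fam} then gives $r(z_0)=t_0$. Consequently the intersection point equals $\gamma_{\nu(z_0)}(r(z_0))=\gamma_{\nu(z_0)}(t_0)=x$, so $\gamma(\rho(z_0))=x$, and since $r(z_0)=t_0$ we obtain $h(z_0)-t_0=r(z_0)+\rho(z_0)-t_0=\rho(z_0)$; therefore $\gamma(h(z_0)-t_0)=\gamma(\rho(z_0))=x=\gamma_{V(z)}(t(z))$ for $z\in U$, which is the claim.

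It remains to dispose of the degenerate case in which $\gamma$ is not transverse to $\gamma_{\nu(z_0)}$; by uniqueness of geodesics this forces $\gamma$ to be a reparametrization of $\gamma_{\nu(z_0)}$, and reversibility of $F$ leaves only the two possibilities $\gamma(s)=\gamma_{\nu(z_0)}(\pm s+c)$ for a constant $c$. Solving $\gamma(\rho(z_0))=\gamma_{\nu(z_0)}(r(z_0))$ for $c$ and using $\gamma_{\nu(z_0)}(t_0)=x$ pins down the $\gamma$-parameter $s_x$ of $x$: in the orientation-reversing case one gets $c=r(z_0)+\rho(z_0)=h(z_0)$ and $s_x=c-t_0=h(z_0)-t_0$, so $\gamma(h(z_0)-t_0)=x$ directly, while the orientation-preserving case either does not arise under the standing hypotheses (e.g.\ it would make $\gamma_{\nu(z_0)}$ continue past $\partial M$) or again forces $r(z_0)=t_0$, reducing to the computation above. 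In all cases $\gamma(h(z_0)-t_0)=x=\gamma_{V(z)}(t(z))$ for $z$ near $z_0$, and all geodesics of $F(z_0,t_0)$ pass through this common point.

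I expect the main obstacle to be the bookkeeping in the transverse case, namely making the hypothesis $|\rho(z)|<\hbox{inj}(M)$ of Lemma~\ref{Le:intersect_of_geo_and_foc_fam} available after shrinking $U$: this is precisely where the assumption that the sum $h=r+\rho$ is continuous (rather than $r$ and $\rho$ individually) does its work, in tandem with the continuity-of-$r$ argument internal to the proof of Lemma~\ref{Le:intersect_of_geo_and_foc_fam} and the reparametrization normalising $\rho(z_0)=0$. The coincident case is a real but minor nuisance that must simply be handled by hand as above.
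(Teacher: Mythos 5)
Your reduction to Lemma~\ref{Le:intersect_of_geo_and_foc_fam} is an attractive shortcut, but the step that makes it available --- establishing continuity of $r$ (hence of $\rho$) at $z_0$ so that $U$ can be shrunk to force $|\rho|<\operatorname{inj}(M)$ --- is circular as you have written it. Part~\textbf{(III)} of the proof of Lemma~\ref{Le:intersect_of_geo_and_foc_fam} does \emph{not} only use discreteness of intersections and injectivity of $\gamma_{\nu(z_0)}$ on $[0,r_1]$: it derives a contradiction from the statement ``there are two different distance minimizing geodesics connecting $x$ to $x'$,'' and the reason $\gamma|_{[0,\rho']}$ is known to be minimizing there is precisely the hypothesis $|\rho'|<\operatorname{inj}(M)$. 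In the present lemma you only have $|\rho|<L$ for some unconstrained $L$, so the argument you cite cannot run, and the bound you want to deduce from it is exactly the bound it assumes. Your parenthetical paraphrase of part~\textbf{(III)} is therefore incorrect, and the claimed continuity of $r$ at $z_0$ is not justified. (There \emph{is} an argument that would close this gap without invoking $\operatorname{inj}(M)$: if $z_k\to z_0$ with $r(z_k)\to r'\ne r(z_0)$ and $\rho(z_k)\to\rho'$, then continuity of $h$ gives $\rho'+r'=\rho(z_0)+r(z_0)$, and concatenating $\gamma_{\nu(z_0)}|_{[0,r(z_0)]}$ with the appropriate piece of $\gamma$ produces a length-minimizing broken curve from $z_0$ to $\gamma_{\nu(z_0)}(r')$ with a genuine corner at the transverse intersection point, a contradiction. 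That uses continuity of $h$ essentially, which part~\textbf{(III)} does not have available. But this argument is not what you wrote.)

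Beyond that, the treatment of the degenerate case is asserted rather than proved: ``does not arise'' is not explained, and the reduction ``again forces $r(z_0)=t_0$'' is stated without an argument; the paper handles this case by appealing directly to the conclusion of Lemma~\ref{lma:focusin_families_focus}, and in fact the continuity of $h$ is what forces $\rho(z_0)+r(z_0)=2t_0$ there. Finally, it is worth noting that the paper's own proof does not attempt to prove continuity of $r$ and $\rho$ at all. It instead allows $\rho$ to oscillate among the finitely many intersection parameters $\rho_0^1,\dots,\rho_0^N$, partitions a small boundary neighbourhood $U(R)$ into closed sets $W_j(R)$ according to which $\rho_0^j$ the value $\rho(z)$ tracks, uses the diffeomorphism argument of Lemma~\ref{Le:intersect_of_geo_and_foc_fam} (parts~\textbf{(IV)}--\textbf{(V)}) to show that every $W_k(R)$ with $r_0^k\ne t_0$ has measure zero, and only then uses continuity of $h$ in a limiting argument to identify $\rho_0^k=h(z_0)-t_0$. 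Your approach, if the continuity step were repaired as indicated, would be a genuine simplification of this measure-theoretic route; as written, though, it has a real gap at its central step.
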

\begin{proof}
If $\gamma$ and $\gamma_{V(z_0)}$ are the same geodesic the result follows from lemma \ref{lma:focusin_families_focus}.

If we assume that $\gamma$ is a different geodesic to $\gamma_{V(z_0)}$ it the holds due to the definition of injectivity radius of $M$ that the geodesic segments $\gamma([-L,L])$ and $\gamma_{V(z_0)}([0,r_1])$ can intersect at most finitely many times. Thus there exists $N \in \N$ such that
$
\gamma(\rho)=\gamma_{V(z_0)}(r),$ for $(\rho,r) \in [-L,L]\times  (0,r_1)$ $ \hbox{if and only if } (\rho,r)=(\rho_0^k, r^k_0)$, for some $ \rho_0^k\in \{\rho_0^1,\ldots, \rho_0^N\} \subset [-L,L],$ and $r_0^k\in \{r^1_0,\ldots, r_0^N\} \subset (0,r_1).
$

Let $0< \epsilon < \frac{1}{2}\hbox{inj}(M)$. We claim that there exists $R_0>0$ such that for all $0<R<R_0$ and 
\begin{equation}
\label{eq:rhos_are_dense}
z \in U(R):=B(z_0,R) \cap \p M, \quad \hbox{ it holds that } \min_{j\in \{1,\ldots, N \}}|\rho(z)-\rho_0^j|<\epsilon.
\end{equation}
If this is not true, the boundedness of functions $r(\cdot)$ and $\rho(\cdot)$ imply the existence of a sequence $(z_k)_{k=1}^\infty \subset \p M$ that converges to $z_0$ and which satisfy
\begin{equation}
\lim_{k\to \infty} r(z_k)=\widetilde r \in [0,r_1] , \quad \lim_{k\to \infty} \rho(z_k)=\widehat \rho\in [-L,L]\setminus \{\rho_0^1,\ldots, \rho_0^N\}.
\end{equation}
Therefore we arrive in a contradiction  $\gamma(\widehat \rho)=\gamma_{V(z_0)}(\widehat r)$.

We set 
\vspace{-0.3cm}
\begin{equation}
\begin{split}
W_j(R)
:=&\{z\in U(R): \exists r\in [0,r_1], \: \exists \rho \in [-L,L], 
\\
&\hbox{ s.t.} \: \gamma_{V(z)}(r)=\gamma(\rho),\: r+\rho=h(z), \: |\rho-\rho_0^j| \leq \epsilon\}, \: j \in \{1,\ldots, N\},
\end{split}
\end{equation}
and show that these sets are relatively closed in $U(R)$. Choose a sequence $(z_k)_{k=1}^\infty \subset W_j(R)$ that converges to $z \in U(R)$. For any $k \in \N$ we choose $r_k \in [0,r_1]$ and $\rho_k \in [\rho_0^j-\epsilon, \rho_0^j+\epsilon]$ for which
\begin{equation}
\gamma_{V(z_k)}(r_k)=\gamma(\rho_k), \quad \hbox{ and } \quad r_k+\rho_k=h(z_k).
\end{equation}
After choosing a sub-sequence of $(z_k)_{k=1}^\infty$ we may without loss of generality assume that $\lim_{k\to \infty }r_k= \widetilde r \in [0,r_1]$, $\lim_{k\to \infty } \rho_k=\widetilde \rho \in [\rho_0^j-\epsilon, \rho_0^j+\epsilon]$,  
\begin{equation}
\gamma_{V(z)}(\widetilde r)=\gamma(\widetilde \rho), \quad \hbox{and} \quad h(z)=\widetilde r +\widetilde \rho.
\end{equation}
Here we used the continuity of the geodesic flow and the function $h(\cdot)$ to obtain the last two equations. Thus we have verified that $z \in W_j(R)$. Therefore $W_j(R)$ is closed and measurable. 

Equation \eqref{eq:rhos_are_dense} implies that
\begin{equation}
U(R)=\bigcup_{j=1}^N W_j(R).
\end{equation}
Thus there exists $j \in \{1,\ldots, N\}$ for which $W_j(R)$ has a strictly positive $(n-1)$-dimensional measure. Choose $k \in \{1,\ldots, N\}$ and suppose that $r_0^k\neq t_0$. By following the proof of lemma \ref{Le:intersect_of_geo_and_foc_fam} we show that after choosing a smaller $R>0$ there exist $0<T<\frac12\hbox{inj}(M)$, $(n-1)$-dimensional surface $\Sigma \subset M$ and a diffeomorphism 
\begin{equation}
\widetilde \Phi\colon U(R) \to \Sigma, \quad \hbox{ for which } \widetilde \Phi(z)\subset \gamma([\rho_0^k-T,\rho_0^k+T]), \hbox{ if } z \in W_k(R). 
\end{equation}
Thus the $(n-1)$-dimensional measure of $W_k(R)$ is zero. We have proven that the set $K \subset \{1,\ldots, N\}$ that contains all those $k \in \{1,\ldots, N\}$ for which $r_0^k=t_0$, is not empty and moreover
\begin{equation}
U(R)\setminus \left( \bigcup_{k \in K} W_k(R)\right), \hbox{ is of measure zero.}
\end{equation}
Therefore $\bigcup_{k \in K} W_k(R)$ is dense in $U(R)$. Since $\epsilon>0$ was arbitrary we can choose   $k \in K$, sequences $(z_j)_{j=1}^\infty \subset W_k(R)$, $(r_j)_{j=1}^\infty, \: (\rho_j)_{j=1}^\infty \subset \R$, such that
$
z_j \to z_0, \: \rho_j \to \rho_0^k, \: r_j \to \widetilde r \in [0,r_1]
$
and
$
%\begin{equation}
\gamma(\rho_j)=\gamma_{V(z_j)}(r_j), \: \hbox{for every } j \in \N.
%\end{equation}
$
Since $r_0^k=t_0$ the continuity of geodesic flow implies
\begin{equation}
\gamma_{V(z_0)}(r_0^k)=\gamma(\rho_0^k)=\gamma_{V(z_0)}(\widetilde r).
\end{equation}
Thus we must have $t_0=\widetilde r$. Since the function $h(\cdot)$ is continuous we also have $\rho_0^k=h(z_0)-t_0$ and moreover
%\begin{equation}
$
\gamma(h(z_0)-t_0)=\gamma_{V(z_0)}(t_0).
$
%\end{equation}
This and lemma \ref{lma:focusin_families_focus} complete the proof.
\end{proof}

\begin{lemma}
\label{lma:R-to-BCD}
The broken scattering relation determines the boundary cut distance function $\bou$.
\end{lemma}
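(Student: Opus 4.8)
The plan is to characterize $\bou(z_0)$ purely in terms of families of focusing directions, which we already know are recoverable from the broken scattering relation. Recall that $\bou(z_0) = \sup\{t > 0 : d(z_0,\gamma_{z_0,\nu(z_0)}(t)) = t = d(\partial M, \gamma_{z_0,\nu(z_0)}(t))\}$, i.e. the last time the normal geodesic from $z_0$ remains both distance-minimizing from $z_0$ and realizes the distance to the whole boundary. The key idea is that for $t_0 < \bou(z_0)$ the focusing point $x = \gamma_{z_0,\nu(z_0)}(t_0)$ has $z_0$ as its (unique) closest boundary point, so by the construction in Section~\ref{ssec:focus_fam} there genuinely exists a family of focusing directions $F(z_0,t_0)$ whose geodesics all meet at $x$ at time $t(z)$; whereas for $t_0 > \bou(z_0)$ this geometric picture must break.

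First I would show: if $t_0 < \bou(z_0)$, then a family of focusing directions around $(z_0,t_0)$ exists and, by Lemma~\ref{lma:focusin_families_focus} (using $t_0 < \bou(z_0) < \tau_M(z_0)$ via Lemma~\ref{Le:comp_of_crit_func}), it actually focuses in the sense of \eqref{eq:focus_real}. More importantly, I would extract from the broken scattering data the fact that the focusing point $x$ satisfies $d(\partial M, x) = t_0$: this is detectable because $v \mathrel{R_{2t_0}} v$ would register the shortest broken path from $z_0$ back to itself with turning point $x$, and no boundary point is closer to $x$ than $z_0$. Conversely, for $t_0$ slightly larger than $\bou(z_0)$, either $\gamma_{z_0,\nu(z_0)}$ ceases to minimize distance from $z_0$ — detectable via Lemma~\ref{Le:self_int} as a self-intersection forcing $s_1 + s_2 > 2\cut(z_0,\nu(z_0))$, which one can read off since $\cut$ is already determined — or some other boundary point $z_1 \neq z_0$ becomes at least as close to $x$, which manifests as a relation $\nu(z_0) \mathrel{R_{t_0 + t_1}} V(z_1)$ with $t_1 < t_0$, contradicting the defining property $t(z_0) = t_0$ being a strict local max of an honest focusing family. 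Thus $\bou(z_0) = \sup\{t_0 : $ the focusing family around $(z_0,t_0)$ focuses and $z_0$ remains a closest boundary point to its focal point$\}$, and every condition in this supremum is phrased in terms of data the broken scattering relation supplies.

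To make the "closest boundary point" condition algorithmic, I would use Lemma~\ref{Le:intersect_of_geo_and_foc_fam_part2}: given a candidate focusing family $F(z_0,t_0)$ and any geodesic $\gamma$ emanating from another boundary point $z_1$ (parametrized by its initial vector in $\inwb{}$), the broken scattering relation tells us whether $\gamma$ meets all the geodesics of the family, and if so, at what total time; comparing this total time with $t(z) + (\text{length along }\gamma)$ detects whether $z_1$ is closer to the focal point than $z_0$. Hence $\bou(z_0)$ is the supremum of those $t_0$ for which the focusing family around $(z_0, t_0)$ exists, focuses, and no such competing boundary geodesic reaches the focal point in time $\le t_0$.

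The main obstacle I anticipate is the converse direction at the critical value $t_0 = \bou(z_0)$: one must rule out the degenerate possibility that a focusing family persists and "appears to focus" from the data even though geometrically $z_0$ has already tied with another boundary point, or that $\gamma_{z_0,\nu(z_0)}$ stops minimizing in a way invisible to single-scattering measurements. Handling this requires carefully exploiting that at $t_0 = \bou(z_0)$ there is a second boundary geodesic hitting the focal point in time exactly $\bou(z_0)$, and that this second geodesic does intersect all members of the focusing family — so Lemma~\ref{Le:intersect_of_geo_and_foc_fam_part2} applies and pins the contact time, giving a detectable witness. The argument is essentially the Finslerian analogue of the Riemannian reasoning in \cite{kurylev2010rigidity}, with reversibility used (as in Lemma~\ref{Le:self_int}) to ensure broken paths may be traversed in either order.
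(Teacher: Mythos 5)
Your proposal follows essentially the same strategy as the paper's proof: characterize $\bou(z_0)$ by detecting, through focusing families and the broken scattering relation, the first time at which some other boundary point $w$ can reach the focal point $\gamma_{\nu(z_0)}(t_0)$ in time shorter than $t_0$. You phrase the characterization as a supremum of ``good'' $t_0$ (those for which $z_0$ remains closest), whereas the paper uses the complementary and slightly cleaner infimum of ``bad'' $t_0$ --- namely $\bou(z_0)=\inf\{t_0: \exists\, F(z_0,t_0),\ s<t_0,\ w\neq z_0 \text{ with } \nu(w)\,R_{s+t(z)}\,V(z),\ s<t(z)\text{ on }U\}$ --- and then defers the verification to \cite[lemma 2.10]{kurylev2010rigidity}. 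One small wrinkle in your write-up: you propose to detect ``$\gamma_{z_0,\nu(z_0)}$ ceases to minimize from $z_0$'' via lemma~\ref{Le:self_int}, appealing to ``$\cut$ is already determined'' --- but the paper does not establish that $\cut$ is recoverable at this stage, and in fact this case is vacuous near $\bou(z_0)$ because lemma~\ref{Le:comp_of_crit_func} gives $\cut(z_0,\nu(z_0))>\bou(z_0)$; so the only relevant failure mode for $t_0$ slightly above $\bou(z_0)$ is the appearance of a competing boundary point, exactly as the paper's formulation records.
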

\begin{proof}
%The proof of this claim is analogous to the proof of the statement in Lemma 2.10. in \cite{kurylev2010rigidity}. We sketch part of the proof here.

Let $z_0 \in \p M.$ By the definition of $\bou(z_0)$ it holds that for $t_0 \in(0,\bou(z_0))$ the point $z_0$ is the closest boundary point to $x_0:=\gamma_{\nu(z_0)}(t_0)$. However if $t_0 > \bou(z_0)$ then there exists $w\in \p M$ that is closer to $x_0$ than $z_0$ in the sense that: 
\begin{equation}
\hbox{ There exists $s<t_0$ such that } \gamma_{\nu(w)}(s)=x_0.
\end{equation}
By lemma \ref{Le:comp_of_crit_func} we know that $\cut(z_0,\nu(z_0))>\bou(z_0)$. If
$t_0\in(\bou(z_0), \tau_M(z_0))$ then according to discussion at the beginning of subsection \ref{ssec:focus_fam} there exist $w \in \p M$ and a family of focusing directions $F(z_0,t_0):=\{U, V(\cdot),t(\cdot)\}$  around $(z_0,t_0)$ such that 
\begin{equation}
\label{eq:def-prop-of-BCD}
    \nu(w)R_{s+t(z)}V(z), \quad \hbox{and} \quad s<t(z) \quad \hbox{for all } z \in U.
\end{equation}

First we note that the broken scattering realtion determines the exit time function since for any $v \in \overline{\p_{in}SM}$ holds
\begin{equation}
\exit(v)=\sup\{t\geq 0: \hbox{ such that }vR_{2t}v\}.
\end{equation}
We claim that 
\begin{equation}
\begin{split}
\bou(z_0)=&\inf \{t_0 \in (0,\exit(\nu(z_0))): \hbox{ There exist $F(z_0,t_0)$, } \hbox{$s < t_0$,}
\\
&
\hbox{ and $w\in \p M\setminus\{z_0\}$ such that} \hbox{ \eqref{eq:def-prop-of-BCD} is valid}\}. 
\end{split}
\end{equation}
For the proof of this equation see in \cite[lemma 2.10]{kurylev2010rigidity}.
%Suppose that this is not true. Then there exists $t_0 < \bou(z_0)$, $s< t_0$ and a family of focusing directions which satisfy \eqref{eq:def-prop-of-BCD}. This implies the existence of functions $h(z)$ and $r(z)$ which satisfy
%\begin{equation}
%\gamma_{z,V(z)}(r(z))=\gamma_{w,\nu(w)}(h(z)), \quad \hbox{and} \quad h(z)+r(z)=t(z)+s_0, \: z \in U.
%\end{equation}
%
%Therefore
%\begin{equation}
%\gamma_{z,V(z)}(r(z))=\gamma_{w,\nu(w)}(t(z)-r(z)+s_0).
%\end{equation}
%
%Soon we prove that
%\begin{equation}
%\label{eq:limsup-of-r}
%r_0:=\limsup_{z \to z_0}r(z)\leq t_0.
%\end{equation}
%If \eqref{eq:limsup-of-r} holds true we can after widehating $U$ assume that
%\begin{equation}
%r(z)<\bou(z_0), \quad \hbox{for all } z \in U.
%\end{equation}
%
\end{proof}

We recall that the exponential map at the boundary is 
\[
\exp_{\p M}\colon \p M \times [0,\infty) \to M, \quad \exp_{\p M}(z,t)=\gamma_{\nu(z)}(t).
\]

\begin{proposition}
\label{pro:R-to-BDF}
Let $x\in M$. The broken scattering relation determines the set of boundary distance functions
\begin{equation}
r_x\colon \p M \to \R, \quad r_x(z)=d(x,z), \: z \in \p M, \: x \in M. 
\end{equation}
\end{proposition}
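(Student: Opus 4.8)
The plan is to reconstruct, from the broken scattering relation, the full boundary distance function $r_x$ for every interior point $x$. The key observation is that any such $x$ lies on a normal geodesic $\gamma_{\nu(z_0)}$ from some boundary point $z_0$, so we may parametrize interior points by pairs $(z_0,t_0)$ with $x=\gamma_{\nu(z_0)}(t_0)$. The previous subsections give us three crucial pieces of information extractable from $R$: the exit time function $\exit$ via $\exit(v)=\sup\{t\ge 0: vR_{2t}v\}$, the cut distance $\cut(z_0,\nu(z_0))$ (which is also recoverable from $R$ by analogous arguments), and the boundary cut distance $\bou$ by lemma~\ref{lma:R-to-BCD}. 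Moreover, by lemma~\ref{lma:focusin_families_focus}, whenever $t_0<\cut(z_0,\nu(z_0))$, every family of focusing directions $F(z_0,t_0)$ genuinely focuses at the point $x=\gamma_{\nu(z_0)}(t_0)$, and all such families are determined by $R$.

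First I would treat the \emph{short range} regime $t_0\in(0,\bou(z_0))$. Here $z_0$ is the closest boundary point to $x=\gamma_{\nu(z_0)}(t_0)$, so $d(x,z_0)=t_0$ is already known. For the remaining boundary points $z$, pick a family of focusing directions $F(z_0,t_0)=\{U,V(\cdot),t(\cdot)\}$; by lemma~\ref{lma:focusin_families_focus} all the geodesics $\gamma_{V(z)}$ reach $x$ at time $t(z)$, so $d(x,z)\le t(z)$ with equality when the connecting geodesic is minimizing. To pin down $r_x(z)$ for \emph{all} $z$, I would instead characterize $d(x,z)$ directly in terms of $R$: $d(x,z)$ equals the infimum over all ways of connecting $z$ to $x$ along a broken geodesic whose break point is $x$; concretely, using the focusing family based at $z_0$ together with the relation $V(z)R_{d(x,z)+t_0}\nu(z_0)$ one reads off $d(x,z)$ as the smallest $t$ with $\nu(z)R_{t+t_0}\nu(z_0)$ realized by geodesics actually passing through $x$. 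This mirrors \cite[Section 2]{kurylev2010rigidity}: the broken scattering relation ``sees'' the point $x$ through the focusing family, and the distance to any boundary point is the minimal travel time of a geodesic from that boundary point to $x$.

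Next I would handle the \emph{intermediate range} $t_0\in[\bou(z_0),\cut(z_0,\nu(z_0)))$. The only new subtlety is that $z_0$ need no longer be the closest boundary point, but lemma~\ref{lma:focusin_families_focus} still applies since $t_0<\cut(z_0,\nu(z_0))$, so focusing families still focus at $x$ and the same characterization of $r_x(z)$ goes through. Since every interior point $x$ can be written as $\gamma_{\nu(z_0)}(t_0)$ with $t_0<\cut(z_0,\nu(z_0))$ — indeed, take $z_0$ a closest boundary point to $x$, whence $t_0=d(x,\partial M)\le\bou(z_0)<\cut(z_0,\nu(z_0))$ by lemma~\ref{Le:comp_of_crit_func} — this already exhausts all of $M^{int}$, and the short-range analysis alone suffices. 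Thus the intermediate case is not strictly needed, but it shows the construction is robust. Finally, one must check the resulting function $r_x$ is recovered \emph{as an unindexed element of the set} $\{r_x: x\in M\}$, consistently with the statement of proposition~\ref{prop:R-to-bdf}: this is automatic because the whole construction inputs only $R$ and outputs, for each realizable $(z_0,t_0)$, a function on $\partial M$, and different $(z_0,t_0)$ giving the same $x$ give the same function.

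The main obstacle is the step of showing that the focusing-family data actually computes the \emph{distance} $d(x,z)$ and not merely some travel time along a non-minimizing geodesic to $x$. One needs to argue that among all geodesics from $z$ through the fixed point $x$, the broken scattering relation lets us select the shortest, and that lemma~\ref{Le:intersect_of_geo_and_foc_fam_part2} guarantees these geodesics genuinely meet at $x$ (rather than at some other common point of the focusing family). This is exactly the role played by lemmas~\ref{Le:intersect_of_geo_and_foc_fam} and~\ref{Le:intersect_of_geo_and_foc_fam_part2}: any geodesic meeting all members of the family $F(z_0,t_0)$ must pass through the focal point at the focusing time, which rules out spurious intersections and lets us read off the true distance. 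I would therefore model the argument closely on \cite[Section 2, proof of Theorem 1.2]{kurylev2010rigidity}, substituting reversibility of $F$ wherever symmetry of $d$ was used, and invoking lemma~\ref{lma:focusin_families_focus} in place of the Riemannian focusing lemma.
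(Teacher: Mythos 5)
Your overall strategy is the same as the paper's: parametrize interior points as $x=\exp_{\p M}(z_0,t_0)$ with $t_0\in[0,\bou(z_0)]$ (so that $t_0<\cut(z_0,\nu(z_0))$ by Lemma~\ref{Le:comp_of_crit_func}), use focusing families to detect $x$, and characterize $d(x,w)$ as an infimum of travel times read off from $R$, deferring details to the Riemannian argument in \cite[Theorem 2.13]{kurylev2010rigidity}. However, your \emph{concrete} characterization of $d(x,z)$ has a genuine gap. You propose to read off $d(x,z)$ as the smallest $t$ with $\nu(z)R_{t+t_0}\nu(z_0)$ ``realized by geodesics actually passing through $x$,'' but this is not something that can be extracted from $R$: the relation $\nu(z)R_{t+t_0}\nu(z_0)$ only records that some $t_1,t_2>0$ with $t_1+t_2=t+t_0$ give $\gamma_{\nu(z)}(t_1)=\gamma_{\nu(z_0)}(t_2)$, and nothing in $R$ alone lets you impose $t_2=t_0$, i.e.\ that the break point is $x$. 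Taking the smallest such $t$ will generically under-shoot $d(x,z)$, because the break point minimizing $t_1+t_2-t_0$ need not be $x$.

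There are two fixes, both of which the paper implements. First, the minimizing geodesic from a boundary point $w$ to $x$ need not be normal at $w$, so one must vary the direction $\eta$ over all of $S_wM$, not fix $\eta=\nu(w)$. Second, and more importantly, the break-point constraint must be enforced by requiring the relation $V(z')R_{s+t(z')}\eta$ to hold \emph{simultaneously for all $z'$ in the domain $U$ of the focusing family}, not just at $z'=z_0$. It is precisely this simultaneous intersection with the whole family that, via Lemma~\ref{Le:intersect_of_geo_and_foc_fam_part2}, forces the geodesic $\gamma_{w,\eta}$ to pass through the focal point $x$, after which $\inf S=d(x,w)$ is extracted by minimality of the geodesic. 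You do gesture at this (``one needs to argue that among all geodesics from $z$ through the fixed point $x$, the broken scattering relation lets us select the shortest'' and correctly cite Lemmas~\ref{Le:intersect_of_geo_and_foc_fam} and~\ref{Le:intersect_of_geo_and_foc_fam_part2}), but the formula you actually wrote down neither varies $\eta$ nor imposes the full family condition, so as stated it does not determine $d(x,z)$.

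Minor points: your short-range/intermediate-range split is unnecessary (as you note, choosing $z_0$ to be a closest boundary point to $x$ covers all of $\sisus M$), and the paper in fact fixes $t_0\le\bou(z_0)$ from the start. Also, in the concrete formula you use $z$ both as the target boundary point and as the variable ranging over the focusing family's domain $U$; these must be kept distinct, which is part of why the formula as written does not do the right thing.
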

\begin{proof}
By the definition of boundary cut distance function we have that 
\begin{equation}
\begin{split}
M=\{\exp_{\p M}(z,t) \in M: z \in \p M, \: t \in [0, \bou(z)]  \}.
\end{split}
\end{equation}
%Moreover for any $x=\exp_{\p M}(z,t)$ where $t<\bou(z)$ the point $z \in \p M$ is the unique closest boundary point. 
Therefore $\exp^{-1}_{\p M}M \subset (\p M \times \R)$ is a way to represent $M$, although points in the boundary cut locus 
\begin{equation}
\omega_{\p M}:=\{x\in M: x=\exp_{\p M}(z,\bou(z)), \: z \in \p M\}
\end{equation}
may have several representatives. As we are interested in determining the boundary distance functions the possible ambiguity does not concern us since it is proven in \cite[proposition 3.1]{de2019inverse} that 
\begin{equation}
r_x=r_y \quad \hbox{ if and only if } \quad x=y \in M. 
\end{equation}

Let us fix $z_0 \in \p M$ and $t_0 \in [0,\bou(z_0)]$. Then we choose $w \in \p M$. As $\p M$ is strictly convex any distance minimizing curve from $w$ to $x:=\exp_{\p M}(z_0,t_0)$ is a geodesic $\gamma_{w,\eta}:[0,s]\to M$, where $s=d(w,x)$ and $\eta \in S_{w} M$. Due to lemma $\ref{Le:comp_of_crit_func}$ there exist a family of focusing directions $F(z_0,t_0)=\{U, V(\cdot),t(\cdot)\}$ such that
$
%\begin{equation}
%\label{eq:distance_from_focus_fam}
V(z)R_{s+t(z)}\eta, \: \hbox{ for any } z \in U.
%\end{equation}
$
We set 
\begin{equation}
S:=\{s>0: \hbox{ There exist $\eta \in S_wM$, and $F(z_0,t_0)$ s.t. $V(z)R_{s+t(z)}\eta$ holds}\}
\end{equation}
and claim that 
\begin{equation}
d(x,w)=\inf S.
\end{equation}
The proof of this claim is an adaptation of the proof of an analogous statement in \cite[theorem 2.13.]{kurylev2010rigidity}.  
%
%We note first that due to earlier discussion we have $\widetilde s \leq d(x,w_0)$. To complete the proof we show that for any $s>0$, $\eta \in S_{w_0} M$ and $F(z_0,t_0)$ for which \eqref{eq:distance_from_focus_fam} holds it must hold that $s\geq d(x,w_0)$. If \ref{eq:distance_from_focus_fam} is valid, we denote $\gamma:=\gamma_{w_0,\eta}$. There exist functions $r,\rho\colon U \to \R$ such that
%\begin{equation}
%\gamma(\rho(z))=\gamma_{V(z)}(r(s)), \quad s+t(z)=\rho(z)+r(z).
%\end{equation}
%
%We suppose first that $\gamma\neq \gamma_{z_0,\nu(z_0)}$ and denote $\widetilde r:=\limsup_{z\to z_0}r(z)$. If $\widetilde r > t_0$ we denote 
%\begin{equation}
%\widetilde x=\gamma_{z_0,\nu(z_0)}(\widetilde r)=\gamma(s+t_0-\widetilde r)
%\end{equation}
%and have
%\begin{equation}
%\begin{split}
%    d(x,w_0)\leq d(x,\widetilde x)+d(\widetilde x, w_0)\leq (\widetilde r -t_0)+(s+t_0-\widetilde r)=s.
%\end{split}
%\end{equation}
%If $r\leq t_0$ we can choose $r_1 \in (t_0,  \tau_{M}(z_0))$ such that $
%0<r(z)\leq r_1$ when $z\in \p M$ is close to $z_0$. Then 
%\begin{equation}
%|\rho(z)|= |s+t(z)-r(z)|
%\end{equation}
%is bounded. Thus we are in the situation of Lemma \ref{Le:intersect_of_geo_and_foc_fam_part2} and it follows that 
%\begin{equation}
%x=\gamma_{z_0,\nu(z_0)}(t_0)=\gamma(s), \quad \hbox{ thus } s\geq d(x,w).
%\end{equation}
\end{proof}

In the following lemma we give an invariant definition for families of focusing directions using the invariant definition of broken scattering relation given in theorem \ref{thm:main}. The claim of the lemma is a direct implication of the definition \ref{def:focusing_family}.

\begin{lemma}
\label{lma:focusing-families-conicide}
Let $(M_i,F_i)$ be two compact Finsler manifolds that satisfy the conditions of theorem \ref{thm:main}. Let $z_0 \in \p M_1$ and $t_0\in (0,\exit(\nu(z_0)))$. Then 
$
\mathcal{F}(z_0,t_0):=\{U,V(\cdot),t(\cdot)\}
$ 
is a family of focusing directions around $(z_0,t_0)$ if and only if $\widetilde{\mathcal{F}}(z_0,t_0):=\{\widetilde U,\widetilde V(\cdot),\widetilde t(\cdot)\}$ is a family of focusing directions around $(\diffeo(z_0),t_0)$, where
\begin{equation}
\widetilde U:=\diffeo (U)\subset \p M_2, \quad \widetilde V:=\Diffeo \circ V \circ \diffeo^{-1}, \quad  \hbox{ and } \quad \widetilde t:=t\circ \diffeo^{-1}.
\end{equation}
\end{lemma}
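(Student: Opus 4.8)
The plan is to unwind both defining conditions of a family of focusing directions, Definition~\ref{def:focusing_family}, under the identification provided by the compatible diffeomorphisms $\diffeo$ and $\Diffeo$, and observe that each of the two conditions \eqref{eq:focusing_family_1} and \eqref{eq:focusing_family_2} is preserved in both directions. Since $\Diffeo$ is a fiberwise linear isomorphism with $\Diffeo(T\partial M_1)=T\partial M_2$, $\Diffeo|_{T\partial M_1}=\der\diffeo$, and (by the discussion following the proof of Theorem~\ref{thm:main}, specifically equation~\eqref{eq:boundary_differential} and the paragraph after it) $\Diffeo$ carries ${\inwb{}}_1$ onto ${\inwb{}}_2$ and maps the inward unit normal $\nu(z_0)$ of $\partial M_1$ to the inward unit normal $\nu(\diffeo(z_0))$ of $\partial M_2$, the map $\widetilde V=\Diffeo\circ V\circ\diffeo^{-1}$ is a well-defined smooth vector field $\widetilde U\to\partial_{in}SM_2$, and $\widetilde U=\diffeo(U)$ is a neighborhood of $\diffeo(z_0)$ because $\diffeo$ is a diffeomorphism. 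Likewise $\widetilde t=t\circ\diffeo^{-1}$ is smooth.

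First I would check \eqref{eq:focusing_family_1} for $\widetilde{\mathcal F}$. For $\widetilde z_1,\widetilde z_2\in\widetilde U$ write $z_i=\diffeo^{-1}(\widetilde z_i)\in U$, so $\widetilde V(\widetilde z_i)=\Diffeo(V(z_i))$ and $\widetilde t(\widetilde z_i)=t(z_i)$. Then $V(z_1)R^{(1)}_{t(z_1)+t(z_2)}V(z_2)$ holds by hypothesis, and condition~\eqref{cond_2} of Theorem~\ref{thm:main} (the equivalence $vR^{(1)}_tw\iff\Diffeo(v)R^{(2)}_t\Diffeo(w)$) immediately yields $\widetilde V(\widetilde z_1)R^{(2)}_{\widetilde t(\widetilde z_1)+\widetilde t(\widetilde z_2)}\widetilde V(\widetilde z_2)$, which is exactly \eqref{eq:focusing_family_1} for $\widetilde{\mathcal F}(\diffeo(z_0),t_0)$. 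The argument is symmetric: applying \eqref{cond_2} in the reverse direction recovers \eqref{eq:focusing_family_1} for $\mathcal F$ from that for $\widetilde{\mathcal F}$.

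Next I would verify \eqref{eq:focusing_family_2}. The condition $V(z_0)=\nu(z_0)$ together with $\Diffeo\nu(z_0)=\nu(\diffeo(z_0))$ gives $\widetilde V(\diffeo(z_0))=\Diffeo(V(z_0))=\Diffeo\nu(z_0)=\nu(\diffeo(z_0))$. The condition $t(z_0)=t_0$ gives $\widetilde t(\diffeo(z_0))=t(z_0)=t_0$ directly. Finally, since $\widetilde t=t\circ\diffeo^{-1}$ and $\diffeo$ is a diffeomorphism, the chain rule gives $\der\widetilde t(z)|_{z=\diffeo(z_0)}=\der t(z_0)\circ\der(\diffeo^{-1})|_{\diffeo(z_0)}=0$ because $\der t(z_0)=0$. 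Hence \eqref{eq:focusing_family_2} holds for $\widetilde{\mathcal F}$, and again the converse is symmetric since $\diffeo^{-1}$ and $\Diffeo^{-1}$ are also compatible diffeomorphisms.

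There is no serious obstacle here; the only points requiring mild care are that $\Diffeo$ sends inward normals to inward normals — which is established in the proof of Theorem~\ref{thm:main} from conditions~\eqref{cond_1} and \eqref{cond_2} and is what makes $\widetilde V$ land in $\partial_{in}SM_2$ and satisfy $\widetilde V(\diffeo(z_0))=\nu(\diffeo(z_0))$ — and that $t_0<\exit(\nu(\diffeo(z_0)))$, which follows from $t_0<\exit(\nu(z_0))$ together with the fact that the broken scattering relation determines the exit time function (as in the proof of Lemma~\ref{lma:R-to-BCD}, $\exit(v)=\sup\{t\ge0:vR_{2t}v\}$) and condition~\eqref{cond_1} matching the boundary data. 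Thus the equivalence follows, and since all hypotheses are symmetric in the two manifolds, the biconditional is proved.
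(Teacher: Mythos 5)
Your argument is correct, and it carries out exactly what the paper leaves implicit --- the paper's entire ``proof'' is the one-line remark that the claim is a direct implication of definition~\ref{def:focusing_family}, which you have now made explicit by checking conditions \eqref{eq:focusing_family_1} and \eqref{eq:focusing_family_2} one by one. The one point that deserves an explicit word of caution is your appeal to ``the proof of Theorem~\ref{thm:main}'' for the fact that $\Diffeo\nu(z_0)=\nu(\diffeo(z_0))$: since lemma~\ref{lma:focusing-families-conicide} feeds into proposition~\ref{prop:R-to-bdf}, which is used early in the proof of theorem~\ref{thm:main}, one must verify that the normal-preservation argument you invoke does not itself rest on proposition~\ref{prop:R-to-bdf}. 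It does not --- that paragraph of the proof of theorem~\ref{thm:main} uses only the compatibility of $\diffeo$ and $\Diffeo$ (definition~\ref{def:compatible-diffeomorphism}, in particular $\Diffeo|_{T\p M_1}=\der\diffeo$), condition~\eqref{cond_1} (differentiate $F_1=F_2\circ\Diffeo$ fiberwise at $\nu$ to see that the Legendre dual of $\Diffeo\nu$ is conormal), and condition~\eqref{cond_2} (to see $\Diffeo$ maps $\inwb{1}$ onto $\inwb{2}$, since $v\in\inwb{}$ iff $vR_tv$ for some small $t>0$), none of which depends on the lemma at hand. With that noted there is no circularity, and the rest of your verification --- the transfer of \eqref{eq:focusing_family_1} via condition~\eqref{cond_2}, the chain-rule computation for $\der\widetilde t$, the unit-sphere and exit-time compatibility via condition~\eqref{cond_1} and $\exit(v)=\sup\{t\geq0:vR_{2t}v\}$ --- is exactly what is needed and is correct.
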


We are ready to prove proposition \ref{prop:R-to-bdf}.

\begin{proof}[Proof of proposition~\ref{prop:R-to-bdf}]

Let $x \in \hbox{int}M_1$. Choose $z_0 \in \p M_1$ and
\\
 $t_0 \in (0,\tau_{\p M_1}(z)]$ such that $x=\exp_{\p M_1}(z_0,t_0)$. Due to lemmas \ref{lma:R-to-BCD} and \ref{lma:focusing-families-conicide} it holds that $\tau_{\p M_2}(\diffeo (z))=\tau_{\p M_1}(z)$ and therefore
$
\tilde x: =\exp_{\p M_2}(\diffeo(z_0),t_0) \in \hbox{int}M_2.
$
We aim to verify  
\begin{equation}
\label{eq:distances-agree}    
d_1(z,x)=d_2(\diffeo(z),\widetilde x), \quad \hbox{ for all } z \in \p M_1.
\end{equation}
This implies the left hand side inclusion in \eqref{eq:BDF-agree}. By reversing the roles of $M_1$ and $M_2$, the analogous argument verifies the right hand side inclusion in \eqref{eq:BDF-agree}. Choose $z \in \p M_1$. We denote
$
s_1:=d_1(x,z), \hbox{ and } s_2:=d_2(\widetilde x, \diffeo (z)).
$
Since $\p M_1$ is strictly convex there exists a distance minimizing geodesics  $\gamma_{z,\eta}$ of $M_1$ from $z$ to $x$. 
%and $\gamma_{\diffeo(z),\widetilde \eta}$ from $\diffeo(z)$ to $\widetilde x$. 
Choose a family of focusing directions $\mathcal{F}(z_0,t_0)=\{U,V(\cdot),t(\cdot)\}$ such that
$
V(z)R^{(1)}_{s_1+t(z)}\eta, \: \hbox{for every } z \in U.
$
Then lemma \ref{lma:focusing-families-conicide} implies that $\widetilde{\mathcal{F}}(z_0,t_0)=\{\widetilde U, \widetilde V(\cdot),\widetilde t(\cdot)\}$ is a focusing family at $(\diffeo(z_0),t_0)$ that satisfy
$
\widetilde V(w)R^{(2)}_{s_1+\widetilde t(w)}\Diffeo \eta, \: \hbox{for every } w \in \widetilde U.
$
The proof of proposition \ref{pro:R-to-BDF} implies $s_2\leq s_1$. After reversing the roles of $x$ and $\widetilde x$, we use the analogous argument to verify $s_1\leq s_2$. We have proven $s_1=s_2$. Since $z \in M_1$ was arbitrary the equation \eqref{eq:distances-agree} is valid.
\end{proof}

\appendix
\section{Fourier integral operators and annihilators}

We consider $v, w \in \partial S M$ and introduce their duals, $\widetilde{v}, \widetilde{w} \in \partial (S^\ast M)$, via the Legendre transform \cite[formula (3.4)]{shen2001lectures}. Their tangential components in $T^* \partial M$ are written as $\widetilde{v}_{\partial}, \widetilde{w}_{\partial}$. Instead of the geodesic flow, $\phi_t$, we consider the co-geodesic or Hamiltonian flow, $\widetilde{\phi}_t$ on the co-sphere bundle and the canonical projection $\widetilde \pi\colon S^\ast M \to M$. We note that the broken scattering relation, given in definition~\ref{def:R}, is closely related to the following canonical relation
\begin{multline*}
   \Lambda := \left\{ \left(\widetilde{\pi}(\widetilde{v}),
       \widetilde{\pi}(\widetilde{w}), t,
       \widetilde{v}_{\partial},
       \widetilde{w}_{\partial},
       \tau \right) ;
       \left(x, \xi\right)\ \Big|\
       \widetilde{\pi}(\widetilde{\phi}_{t_1}(\widetilde{v}))
       = \widetilde{\pi}(\widetilde{\phi}_{t_2}(\widetilde{w}))
       = x ,\
       \right. \\ \left.
       t = t_1 + t_2 ,\
       \tau = F^\ast(\widetilde{\phi}_{t_1}(\widetilde{v}))
            = F^\ast(\widetilde{\phi}_{t_2}(\widetilde{w}))=1 ,\
   \xi = \widetilde{\phi}_{t_1}(\widetilde{v}) +
                       \widetilde{\phi}_{t_2}(\widetilde{w})
             \right\} .
\end{multline*}
We write $Y = \partial M \times \partial M \times (0,T)$ and $X =
M$. Then $\Lambda \subset T^*Y \times T^*X $. If the Finsler metric on $\p TM$ is known, as we assume in theorem \ref{thm:main}, the vectors
$\widetilde{v}_{\partial}, \widetilde{w}_{\partial}$ determine
$\widetilde{v}, \widetilde{w}$, respectively. The canonical relation
connects an element $(x,\xi)$ in the wavefront set of scatterers in $T^*X$ to an element in the wavefront set of scattered waves generated by sources, and detected by receivers, in $\partial M$ in
$T^*Y$. That is, there is a Fourier integral operator
(FIO) that maps scatterers to scattered waves restricted to $\partial
M$ and propagates singularities according to the mentioned canonical
relation $\Lambda$.

The Bolker condition states that the natural projection from $\Lambda$
to $T^*Y$ is injective. If the Bolker condition is
satisfied, the range of the FIO can be characterized by
pseudodifferential annihilators \cite{DeHoop-Uhlmann, Guillemin},
which yields the canonical relation. Thus, within the framework of the
results of this paper, vanishing annihilators determine the underlying
reversible Finsler manifold, if  both Bolker condition and the foliation condition of definition \ref{def:foliation} hold.

Although the Bolker condition should be fairly easy to satisfy, it
does not follow from the foliation condition. To see this, we consider
the Euclidean unit disc $M$ with a radial metric $c(r)e, \: r=|x|$. This manifold
can satisfy the Herglotz condition, 
$
\frac{\der}{\der r}\left(\frac{r}{c(r)}\right)>0,
$
and have a geodesic whose opening
angle is between $\pi$ and $2\pi$. Informally, this corresponds to a
geodesic going around the center. Such a geodesic will meet its
reflection across the center two times at some points $p,q \in \sisus{M}$. If we remove a segment of the
geodesic between the second intersection point $q$ and the nearest endpoint at the
boundary and the mirror image of this segment, we obtain two broken
rays $c_1$ and $c_2$ with exactly the same total length $t \in \R$ and boundary
data $v,w \in \p_{in}SM$. This situation violates the Bolker condition. We illustrate this setup in the following picture.

\begin{figure}[h]
\begin{picture}(220,220)
\label{Fi:f_p}
  \put(0,0){\includegraphics[width=8cm]{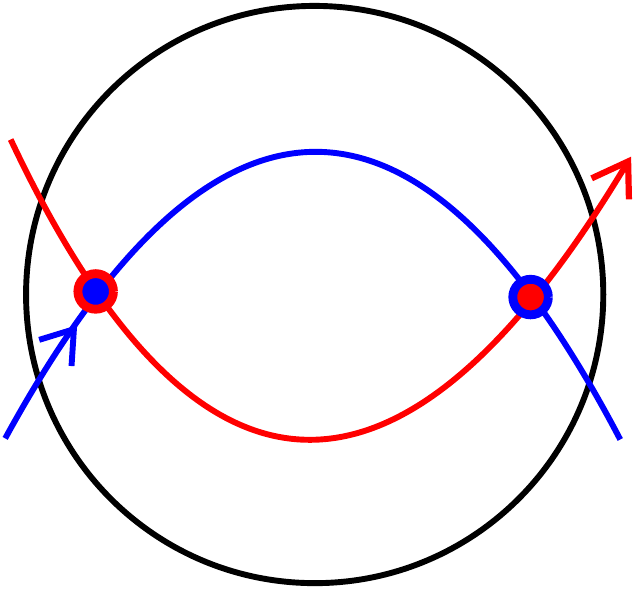}}
  \put(25,72){$v$}
   \put(172,103){$q$}
    \put(48,105){$p$}
    \put(240,150){$-w$}
   \end{picture}
   \caption{Here $c_1$ is the broken ray starting with initial velocity $v$, splitting at $p$ and exiting with velocity $-w$,  $c_2$ is the broken ray with same boundary conditions $\{v,-w\}$ that splits at $q$.}
\end{figure}

% As this construction only requires the existence of a geodesic
%with suitable opening angle, there is an open subset of radial wave
%speeds (conformal factors) satisfying the Herglotz condition but
%violating the Bolker condition. Consider the initial data of the two
%geodesic segments at the boundary fixed. The initial data can be
%rotated around the boundary with any angle $\alpha \in \R / 2\pi \Z$.
%If we make any small variation of the Riemannian metric from the
%conformally Euclidean one, the geodesics will still intersect in a
%similar fashion for topological reasons. The geodesic segments between
%the two intersection points will typically have different lengths. If
%we consider the length difference as a function of $\alpha$, we see
%that it changes sign as $\alpha \to \alpha + \pi$ and is
%continuous. Therefore, it vanishes at some point.
%
%We conclude that there is a radial metric satisfying the Herglotz
%condition that has a neighborhood of metrics so that all metrics in
%this neighborhood fail the Bolker condition. Therefore, in two
%dimensions the Bolker condition is not satisfied generically. In
%higher dimensions this argument does not rule out genericity, but
%genericity cannot hold within the family of radially symmetric
%manifolds.

% \vspace{-0.5cm}

\subsection*{Acknowledgements}

MVdH was supported by the Simons Foundation under the MATH + X program, the National Science Foundation under grant DMS-1815143, and the corporate members of the Geo-Mathematical Imaging Group at Rice University USA.
JI was supported by the Academy of Finland (project 295853). 
%Much of the work was completed during JI's visits to Rice University, and he is grateful for hospitality and support. 
ML was supported by Academy of Finland (projects 284715 and 303754).
TS was supported by the Simons Foundation under the MATH + X program and the corporate members of the Geo-Mathematical Imaging Group at Rice University.  

Part of this work was carried out during JI's and TS's visit to University of Washington, USA, and they are grateful for Prof. Gunther Uhlmann for hospitality and support. 

\bibliographystyle{abbrv}
\bibliography{bibliography}

\def\cprime{$'$}
\begin{thebibliography}{10}

\bibitem{bal2015reconstruction}
G.~Bal, F.~Monard, and G.~Uhlmann.
\newblock Reconstruction of a fully anisotropic elasticity tensor from
  knowledge of displacement fields.
\newblock {\em SIAM Journal on Applied Mathematics}, 75(5):2214--2231, 2015.

\bibitem{bao2018direct}
G.~Bao, G.~Hu, J.~Sun, and T.~Yin.
\newblock Direct and inverse elastic scattering from anisotropic media.
\newblock {\em Journal de Math{\'e}matiques Pures et Appliqu{\'e}es},
  117:263--301, 2018.

\bibitem{barcelo2018uniqueness}
J.~A. Barcel{\'o}, M.~Folch-Gabayet, S.~P{\'e}rez-Esteva, A.~Ruiz, and M.~C.
  Vilela.
\newblock Uniqueness for inverse elastic medium problems.
\newblock {\em SIAM Journal on Mathematical Analysis}, 50(4):3939--3962, 2018.

\bibitem{bartolo2011convex}
R.~Bartolo, E.~Caponio, A.~V. Germinario, and M.~S{\'a}nchez.
\newblock Convex domains of {F}insler and {R}iemannian manifolds.
\newblock {\em Calculus of Variations and Partial Differential Equations},
  40(3-4):335--356, 2011.

\bibitem{beretta2014uniqueness}
E.~Beretta, E.~Francini, and S.~Vessella.
\newblock Uniqueness and {L}ipschitz stability for the identification of
  {L}am\'e parameters from boundary measurements.
\newblock {\em Inverse Problems \& Imaging}, 8(3):611--644, 2014.

\bibitem{burago2010boundary}
D.~Burago and S.~Ivanov.
\newblock Boundary rigidity and filling volume minimality of metrics close to a
  flat one.
\newblock {\em Annals of mathematics}, pages 1183--1211, 2010.

\bibitem{croke1991rigidity}
C.~B. Croke.
\newblock Rigidity and the distance between boundary points.
\newblock {\em Journal of Differential Geometry}, 33(2):445--464, 1991.

\bibitem{croke1992conjugacy}
C.~B. Croke and B.~Kleiner.
\newblock {\em Conjugacy and rigidity for manifolds with a parallel vector
  field}.
\newblock Math. Sciences Research Inst., 1992.

\bibitem{de2019inverse}
M.~V. De~Hoop, J.~Ilmavirta, M.~Lassas, and T.~Saksala.
\newblock Determination of a compact {F}insler manifold from its boundary
  distance map and an inverse problem in elasticity.
\newblock {\em arXiv:1901.03902}, 2019.

\bibitem{maarten2019inverse}
M.~V. de~Hoop and T.~Saksala.
\newblock Inverse problem of travel time difference functions on a compact
  {R}iemannian manifold with boundary.
\newblock {\em The Journal of Geometric Analysis}, 29(4):3308--3327, 2019.

\bibitem{DeHoop-Uhlmann}
M.~V. De~Hoop and G.~Uhlmann.
\newblock Characterization and `source-receiver' continuation of reflection
  seismic data.
\newblock {\em Comm. Math. Phys.}, 263:1--19, 2006.

\bibitem{duistermaat1996fourier}
J.~J. Duistermaat and L.~Hormander.
\newblock {\em Fourier integral operators}.
\newblock Springer, 1996.

\bibitem{dziewonski1981preliminary}
A.~M. Dziewonski and D.~L. Anderson.
\newblock Preliminary {R}eference {E}arth {M}odel.
\newblock {\em Physics of the earth and planetary interiors}, 25(4):297--356,
  1981.

\bibitem{GKLU}
A.~Greenleaf, Y.~Kurylev, M.~Lassas, and G.~Uhlmann.
\newblock Invisibility and inverse problems.
\newblock {\em Bull. Amer. Math. Soc. (N.S.)}, 46(1):55--97, 2009.

\bibitem{GLU}
A.~Greenleaf, M.~Lassas, and G.~Uhlmann.
\newblock On nonuniqueness for {C}alder\'{o}n's inverse problem.
\newblock {\em Math. Res. Lett.}, 10(5-6):685--693, 2003.

\bibitem{greenleaf1993recovering}
A.~Greenleaf and G.~Uhlmann.
\newblock Recovering singularities of a potential from singularities of
  scattering data.
\newblock {\em Communications in mathematical physics}, 157(3):549--572, 1993.

\bibitem{gromov1983filling}
M.~Gromov.
\newblock Filling {R}iemannian manifolds.
\newblock {\em Journal of Differential Geometry}, 18(1):1--147, 1983.

\bibitem{Guillemin}
V.~Guillemin.
\newblock On some results of {G}e{l'}fand in integral geometry.
\newblock In {\em Pseudodifferential operators and applications (Notre Dame,
  Ind., 1984)}, pages 149--155, Providence, RI, 1985. Amer. Math. Soc.

\bibitem{herglotz1905elastizitaet}
G.~Herglotz.
\newblock {\"U}ber die elastizitaet der erde bei beruecksichtigung ihrer
  variablen dichte.
\newblock {\em Zeitschr. f{\"u}r Math. Phys}, 52:275--299, 1905.

\bibitem{hu2014recovering}
G.~Hu, J.~Li, and H.~Liu.
\newblock Recovering complex elastic scatterers by a single far-field pattern.
\newblock {\em Journal of Differential Equations}, 257(2):469--489, 2014.

\bibitem{ivanov2013local}
S.~Ivanov.
\newblock Local monotonicity of {R}iemannian and {F}insler volume with respect
  to boundary distances.
\newblock {\em Geometriae Dedicata}, 164(1):83--96, 2013.

\bibitem{ivanov2019distance}
S.~Ivanov.
\newblock Distance difference representations of {R}iemannian manifolds.
\newblock {\em Geometriae Dedicata}, pages 1--26, 2019.

\bibitem{Katchalov2001}
A.~Katchalov, Y.~Kurylev, and M.~Lassas.
\newblock {\em Inverse boundary spectral problems}.
\newblock Chapman \& Hall/CRC, 2001.

\bibitem{kurylev1997multidimensional}
Y.~Kurylev.
\newblock Multidimensional {G}e{l'}fand inverse problem and boundary distance
  map.
\newblock {\em Inverse Problems Related with Geometry (ed. H. Soga)}, pages
  1--15, 1997.

\bibitem{kurylev2010rigidity}
Y.~Kurylev, M.~Lassas, and G.~Uhlmann.
\newblock Rigidity of broken geodesic flow and inverse problems.
\newblock {\em American journal of mathematics}, 132(2):529--562, 2010.

\bibitem{LaSa}
M.~Lassas and T.~Saksala.
\newblock Determination of a {R}iemannian manifold from the distance difference
  functions.
\newblock {\em Asian journal of mathematics}, 23(2):173--200, 2019.

\bibitem{lassas2018reconstruction}
M.~Lassas, T.~Saksala, and H.~Zhou.
\newblock Reconstruction of a compact manifold from the scattering data of
  internal sources.
\newblock {\em Inverse Problems \& Imaging}, 12(4), 2018.

\bibitem{lassas2003semiglobal}
M.~Lassas, V.~Sharafutdinov, and G.~Uhlmann.
\newblock Semiglobal boundary rigidity for {R}iemannian metrics.
\newblock {\em Mathematische Annalen}, 325(4):767--793, 2003.

\bibitem{liu2017decoupling}
H.~Liu and J.~Xiao.
\newblock Decoupling elastic waves and its applications.
\newblock {\em Journal of Differential Equations}, 263(8):4442--4480, 2017.

\bibitem{mazzucato2007uniqueness}
A.~L. Mazzucato and L.~V. Rachele.
\newblock On uniqueness in the inverse problem for transversely isotropic
  elastic media with a disjoint wave mode.
\newblock {\em Wave motion}, 44(7-8):605--625, 2007.

\bibitem{michel1981rigidite}
R.~Michel.
\newblock Sur la rigidit{\'e} impos{\'e}e par la longueur des
  g{\'e}od{\'e}siques.
\newblock {\em Inventiones mathematicae}, 65(1):71--83, 1981.

\bibitem{michel1994restriction}
R.~Michel.
\newblock Restriction de la distance g{\'e}od{\'e}sique a un arc et
  rigidit{\'e}.
\newblock {\em Bulletin de la Soci{\'e}t{\'e} Math{\'e}matique de France},
  122(3):435--442, 1994.

\bibitem{mukhometov1981problem}
R.~Mukhometov.
\newblock A problem of reconstructing a {R}iemannian metric.
\newblock {\em Siberian Mathematical Journal}, 22(3):420--433, 1981.

\bibitem{nakamura2003global}
G.~Nakamura and G.~Uhlmann.
\newblock Global uniqueness for an inverse boundary value problem arising in
  elasticity.
\newblock {\em Inventiones mathematicae}, 152(1):205--207, 2003.

\bibitem{oksanen2011inverse}
L.~Oksanen.
\newblock Solving an inverse problem for the wave equation by using a
  minimization algorithm and time-reversed measurements.
\newblock {\em Inverse Problems \& Imaging}, 5(3):731--744, 2011.

\bibitem{paternain2019geodesic}
G.~P. Paternain, M.~Salo, G.~Uhlmann, and H.~Zhou.
\newblock The geodesic {X}-ray transform with matrix weights.
\newblock {\em American Journal of Mathematics}, 141(6):1707--1750, 2019.

\bibitem{pestov2005two}
L.~Pestov and G.~Uhlmann.
\newblock Two dimensional compact simple {R}iemannian manifolds are boundary
  distance rigid.
\newblock {\em Annals of mathematics}, pages 1093--1110, 2005.

\bibitem{shen2001lectures}
Z.~Shen.
\newblock {\em Lectures on Finsler geometry}.
\newblock World Scientific, 2001.

\bibitem{elasticcloaking}
S.~R. Sklan, R.~Y.~S. Pak, and B.~Li.
\newblock Seismic invisibility: elastic wave cloaking via symmetrized
  transformation media.
\newblock {\em New Journal of Physics}, 20(6):063013, jun 2018.

\bibitem{stefanov2005boundary}
P.~Stefanov and G.~Uhlmann.
\newblock Boundary rigidity and stability for generic simple metrics.
\newblock {\em Journal of the American Mathematical Society}, 18(4):975--1003,
  2005.

\bibitem{stefanov2016boundary}
P.~Stefanov, G.~Uhlmann, and A.~Vasy.
\newblock Boundary rigidity with partial data.
\newblock {\em Journal of the American Mathematical Society}, 29(2):299--332,
  2016.

\bibitem{stefanov2017local}
P.~Stefanov, G.~Uhlmann, and A.~Vasy.
\newblock Local and global boundary rigidity and the geodesic {X}-ray transform
  in the normal gauge.
\newblock {\em arXiv:1702.03638}, 2017.

\bibitem{SUbook}
P.~Stephanov and G.~Uhlmann.
\newblock {\em Microlocal Analysis and Integral Geometry (working title)}.
\newblock {https://www.math.purdue.edu/~stefanop/publications/book.pdf}, 2018.

\bibitem{uhlmann2016inverse}
G.~Uhlmann and A.~Vasy.
\newblock The inverse problem for the local geodesic ray transform.
\newblock {\em Inventiones mathematicae}, 205(1):83--120, 2016.

\end{thebibliography}

\end{document}